\newtheorem{thm}{Theorem}[section]
\newtheorem{cor}[thm]{Corollary}
\newtheorem{lem}[thm]{Lemma}
\newtheorem{ex}[thm]{Example}
\newtheorem{prop}[thm]{Proposition}
\newtheorem{defn}[thm]{Definition}
\newtheorem{rem}[thm]{Remark}
\title{\vspace{-15mm}\fontsize{24pt}{10pt}\selectfont\textbf{Variety of Hom-Sabinin algebras and related algebra subclasses}} 
\author{
\textsc{Daniel de la Concepci\'on}\thanks{FPU Grant from the Ministerio de Educaci\'on, Cultura y Deporte in Spain.
 Research project MTM2013-45588-C3-3-P}\\[2mm] 
\normalsize Universidad de la Rioja (Spain) \\
\normalsize Departamento de Matem\'aticas y Computaci\'on\\ 
\vspace{-5mm}
\and
\textsc{Abdenacer Makhlouf}\\[2mm] 
\normalsize Universit\'e de Haute Alsace (France) \\
\normalsize Laboratoire de Math\'ematiques, Informatique et Applications\\ 
\vspace{-5mm}
}
\date{}
\begin{document}

\maketitle 

\abstract{The purpose of  this paper is to study  Sabinin algebras of Hom-type. It is  shown that Lie, Malcev, Bol and other algebras of Hom-type are naturally Sabinin algebras of Hom-type. To this end, 
we provide a general  key construction that establish a relationship  between identities of some class of Hom-algebras and ordinary algebras.  Moreover, we discuss a new concept of Hom-bialgebra, in relation with universal enveloping Hom-algebras. A study based on primitive elements is provided.}

\section*{Introduction}
Hom-type algebras appeared first in physics literature, in quantum deformations of algebras of vector fiels, mainly Witt and Virasoro algebras. It turns out that if one replaces usual derivation by $\sigma$-derivations, the Jacobi condition is no longer satisfied. The  identity satisfied is a twisted version of Jacobi condition. This was the motivation to introduce a so called Hom-Lie algebras in \cite{Silv06} and their corresponding associative algebras called Hom-associative algebras in \cite{MakhSilvHOM}. Since then many algebraic structures were extended to Hom-setting. Hom-type analogues of alternative algebras, Jordan algebras or Malcev algebras were defined and
discussed in \cite{Makh10,Yau12}. As well as $n$-ary algebras for which  Hom-type versions were introduced in   \cite{Ata09,YauLTS}.

On the other hand there has been work done in relation to the generalization of Lie algebras in such a way that these generalizations would classify differential manifolds with locally defined loop structures; as Lie algebras classify local Lie groups. This resulted in Malcev algebras in relation to Moufang loops; for instance, and eventually to Sabinin algebra and the following results from \cite{SabOr} :

\begin{itemize}
\item Analytic local loops $(Q_i,\cdot,\backslash,/,e)$ are locally isomorphic if an only if  their corresponding Sabinin $\mathfrak{q}_i=T_e(Q_i)$ algebras are isomorphic.

\item Any finite-dimensional Sabinin algebra over $\mathbb{R}$ whose structure constants satisfy certain conditions (can be found in \cite{SabOr}), is the Sabinin algebra of some analytic local loop.

\item Let $(Q,\cdot,\backslash,/,e)$ be an analytic local loop and $\mathfrak{q}$ its Sabinin algebra. If $R$ is a local subloop of $Q$ then $T_e(R)=\mathfrak{r}$ is a Sabinin sublagebra of $\mathfrak{q}$. Conversely, for any Sabinin subalgebra $\mathfrak{r}$ of $\mathfrak{q}$, there is a unique local subloop $R$ in $Q$ such that $T_e(R)=\mathfrak{r}$. The subloop $R$ is normal if and only if  $\mathfrak{r}$ is an ideal of $\mathfrak{q}$.
\end{itemize}

The structure of Sabinin algebras is extensive; as a Sabinin algebra may have an infinite number of operations. 

A Sabinin algebra is a vector space $V$ and two sets of operations $<-;-,->:V^{\otimes n}\otimes V\otimes V\to V$ for $n\geq 0$ and $\Phi_{n,m}:V^{\otimes n}\otimes V^{\otimes m}\to V$ for $n>0$ and $m>1$; that satisfy the following identities:
\begin{enumerate}
\item $<x;a,b>=-<x;b,a>$,
\item $<xaby;c,e>-<xbay;c,e>+\sum<x_{(1)}<x_{(2)};a,b>;c,e>=0$,
\item $\sigma_{a,b,c}(<xc;a,b>+<x_{(1)};<x_{(2)};a,b>,c>)=0$,
\item $\Phi(x,y)=\Phi(\tau\cdot x,\sigma\cdot y)$ where $\tau\in S_n$ and $\sigma\in S_m$,
\end{enumerate}

where $S_n$ is the permutation group of $n$ elements,  $\sum x_{(1)}\otimes x_{(2)}$ is the natural coproduct of $T(V)$, the tensor algebra, that makes it a bialgebra and where $V$ is given by   primitive elements.

The main purpose of this paper is to unify the constructions of certain classes of Hom-algebras  and study  Hom-type generalization of Sabinin algebras and related structures. A Hom-type algebra is an algebra (multiplication)  together with an endomorphism, called twisting map. The main feature of Hom-type generalizations of algebra varieties is that the endomorphism appears in the identities that define the variety; in such a way that, when
the twisting map is the identity map; one recovers the original variety of algebras.  Moreover, we discuss a new concept of Hom-bialgebra, in relation with universal enveloping Hom-algebras and provide a  study based on primitive elements.

Throughout this paper; unless specifically stated, every field $\mathbb{K}$ is of characteristic zero and an algebra is understood as a vector space $A$ with a countable set of operations $\mu_i:A^{\otimes n_i}\to A$. We mean by a Hom-algebra an algebra together with a homomorphism.

\section{Main Construction and Hom-Sabinin algebras}

In this section, we state a theorem providing a suitable way  to establish  a Hom-type generalization of a given ordinary   algebraic structure.  This will be used to study  Hom-Sabinin algebras and their subclasses. Let's define some concepts that are relevant to understand the extent of this result.

Firstly, let's recall an idea given by D. Yau in \cite{YauHomPROP} explaining how to define Hom-algebra varieties from algebra varieties using operads and PROPs. The main idea is to consider a defining identity of the variety as a function obtained using the identity function, the algebra operations; and tensor product, sum and composition of functions; then replacing a subset of instances of the identity function by endomorphisms $\alpha_i:A\to A$ to get a new identity of Hom-type. Then the Hom-algebras of the new variety are defined as those algebras with endomorphisms $(A,\{\mu_i\}_{i\in I},\{\alpha_i\}_{i\in J})$  that satisfy the new identities.

Specifically, let's recall first what a variety of algebras is:

\begin{defn}\label{Variety of algebras}\cite{SHP}
A variety of algebras is the class of all algebraic structures of a given signature satisfying a set of identities; or equivalently a class of algebraic structures of a given signature closed by homomorphic images, subalgebras and direct products. The result that states the equivalence of those two definitions is known as Birkhoff's Theorem.

A signature is a map from a fixed set $S$ to $\mathbb{N}$; $\sigma:S\to\mathbb{N}$. An algebraic structure of a signature $\sigma:S\to \mathbb{N}$ is a vector space $A$ with a set of maps identified with $S$ such that every map $s\in S$ is given as $s:A^{\otimes \sigma(s)}\to A$. In other words, the signature of an algebraic structure is a family of operations considering for each one its arity; meaning that two algebraic structures have the same signature if they have families of operations $\{f_i\}_{i\in I}$ and $\{g_i\}_{i\in J}$ of the same cardinal $I=J$ and the operation $f_i$ has the same arity as $g_i$; i.e. $\sigma(f_i)=\sigma(g_i)$.
\end{defn}

\begin{ex}
There are many examples of varieties of algebras on vector spaces:
\begin{itemize}
\item Lie algebras are given by one binary operation and two identities.
\item Associative algebras are given by one binary operation and one identity.
\item Jordan algebras are given by one binary operation and two identities.
\item $3$-Lie algebras are given by one ternary operation and three identities.
\item Bol algebras are given by one binary and one ternary operation and five identities.
\end{itemize}
\end{ex}

Varieties of algebras are interesting because of the following classical  result:

\begin{prop}
Every non-empty variety of algebras has a free algebra generated by any vector space.
\end{prop}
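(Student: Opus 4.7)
The plan is to construct the free algebra as a quotient of an absolutely free structure. First I would build the \emph{absolutely free} algebra $F_\sigma(V)$ on the vector space $V$ with signature $\sigma:S\to\mathbb{N}$. Concretely, $F_\sigma(V)$ can be presented as the colimit of the chain of vector spaces
\[
V_0 = V, \qquad V_{n+1} = V_n \oplus \bigoplus_{s\in S} V_n^{\otimes \sigma(s)},
\]
with operations $f_s:F_\sigma(V)^{\otimes\sigma(s)}\to F_\sigma(V)$ induced by the canonical inclusions. This enjoys the universal property that any linear map $V\to B$ into a $\sigma$-algebra $B$ extends uniquely to a signature morphism $F_\sigma(V)\to B$.

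Let $E$ be the set of identities defining the variety $\mathcal{V}$, and for each identity $p(x_1,\dots,x_k)=q(x_1,\dots,x_k)$ in $E$ and each tuple $(u_1,\dots,u_k)\in F_\sigma(V)^{k}$, consider the element $p(u_1,\dots,u_k)-q(u_1,\dots,u_k)$. I would define $I\subseteq F_\sigma(V)$ as the smallest vector subspace containing all such differences and stable under every $f_s$ in each argument; this is the congruence ideal generated by $E$. The quotient $F_{\mathcal{V}}(V):=F_\sigma(V)/I$ inherits the signature operations and, by construction, satisfies every identity in $E$, hence $F_{\mathcal{V}}(V)\in\mathcal{V}$.

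For the universal property, given $A\in\mathcal{V}$ and a linear map $\varphi:V\to A$, the universal property of $F_\sigma(V)$ yields a unique signature morphism $\widetilde{\varphi}:F_\sigma(V)\to A$ extending $\varphi$. Since $A$ satisfies every identity in $E$, the ideal $I$ is contained in $\ker\widetilde{\varphi}$, so $\widetilde{\varphi}$ factors uniquely through $F_{\mathcal{V}}(V)$, giving the required extension. The non-emptiness hypothesis enters only to guarantee that $E$ is consistent, so that $F_{\mathcal{V}}(V)$ is not forced to collapse in a trivial way and is genuinely an object of $\mathcal{V}$. The main obstacle is the careful handling of the recursive construction of $F_\sigma(V)$ in the vector-space category, and the verification that $I$ is stable under all operations $f_s$ in each slot simultaneously; these are standard but technical universal-algebra bookkeeping steps, after which the result is a direct application of Birkhoff's Theorem as stated in Definition~\ref{Variety of algebras}.
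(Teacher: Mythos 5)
The paper states this proposition as a classical fact and provides no proof at all, so there is no argument of its own to compare against; your proof is the standard construction and is essentially correct: build the absolutely free term algebra $F_\sigma(V)$ on $V$, quotient by the smallest operation-stable subspace $I$ containing every substitution instance of the defining identities, and obtain the universal property by factoring the unique extension $\widetilde{\varphi}:F_\sigma(V)\to A$ through the quotient (note that $\ker\widetilde{\varphi}$ is operation-stable by multilinearity, hence contains $I$). Two small caveats. First, the chain $V_{n+1}=V_n\oplus\bigoplus_{s}V_n^{\otimes\sigma(s)}$ needs its transition maps specified more carefully than ``canonical inclusion'': the map $V_{n+1}\hookrightarrow V_{n+2}$ must send the summand $V_n^{\otimes\sigma(s)}$ into the summand $V_{n+1}^{\otimes\sigma(s)}$ via $\iota_n^{\otimes\sigma(s)}$ rather than into the first summand, or else the same formal term occurs as two unidentified elements of the colimit and the operations $f_s$ are not well defined; equivalently, one can index $F_\sigma(V)$ directly by operation-labeled trees, which is exactly the picture the paper adopts immediately after stating the proposition. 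Second, the closing appeal to Birkhoff's Theorem is misplaced: your construction is self-contained and uses only the universal property of $F_\sigma(V)$, not the HSP characterization --- indeed the existence of free algebras is ordinarily an ingredient in the proof of Birkhoff's Theorem rather than a consequence of it --- and in the linear setting the non-emptiness hypothesis serves only to make the universal property non-vacuous, since the quotient $F_\sigma(V)/I$ always satisfies the identities by construction. Neither point affects the substance of the argument.
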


To make everything easier it is natural to think of free algebra elements as trees, where every node has one output and $n$ inputs. A node would then represent an $n$-ary operation of the algebraic structure.  The identities can be thought as linear combinations of those trees:

\begin{defn}\label{Hom-type} Let $(A,\{\mu_i\})$ be an algebraic structure and $\alpha:A\to A$ an endomorphism of such structure. Consider an identity as a linear combination of trees $\sum T_i$ with its internal nodes labeled by the set of operations in such a way that if the node $N_j$ is labeled with the operation $s$, then $N_j$ has $\sigma(s)$ inputs and one output. The leaves represent elements in the algebra. 
\\ The Hom-type identities would then be obtained by the following procedure:

 For every internal node $N_j$ on every tree $T_i$, apply $\alpha^{\sigma(s)-1}$ to every leaf which is not comparable to it; considering the partial order defining the tree.

Given a list of identities defining a variety of algebras, its Hom-algebra variety is defined by the Hom-type identities of those.
\end{defn}

The previous procedure can be applied to different sets of identities.

\begin{ex}
In case of associative algebras there is only one identity $(xy)z-x(yz)=0$ that is given by the following linear combination:

\begin{center}
\begin{tikzpicture}
\node (P0) at (-90:0.71cm) {};
\node (P1) at (0:0cm) {$\mu$} ;
\node (P2) at (45:2cm) {$z\ x$};
\node (P3) at (45+90:1cm) {$\mu$};
\node (P4) at (45+90:2cm) {$x$};
\node (P5) at (90:1.41cm) {$y$};
\node (Q0) at (-90+76:2.9cm) {};
\node (Q1) at (0:2.82cm) {$\mu$} ;
\node (Q3) at (11.31:3.61cm) {$\mu$};
\node (Q4) at (18.43:4.47cm) {$z$};
\node (Q5) at (26.57:3.16cm) {$y$};
\node (M) at (0:1.1cm) {};
\node (N) at (0:1.73cm) {};
\draw
(P0) edge[-,>=angle 90] node {} (P1)
(P1) edge[-,>=angle 90] node {} (P2)
(Q0) edge[-,>=angle 90] node {} (Q1)
(Q1) edge[-,>=angle 90] node {} (P2)
(Q1) edge[-,>=angle 90] node {} (Q3)
(P1) edge[-,>=angle 90] node {} (P3)
(Q3) edge[-,>=angle 90] node {} (Q5)
(P3) edge[-,>=angle 90] node {} (P4)
(P3) edge[-,>=angle 90] node {} (P5)
(M) edge[-,>=angle 90] node {} (N)
(Q3) edge[-,>=angle 90] node {} (Q4);
\end{tikzpicture}

\end{center}

Every internal node in this trees is binary, hence $\sigma(N_j)-1=1$. Nevertheless, only one internal node in each tree has a leaf not comparable to it. The final trees are then defined as:

\begin{center}
\begin{tikzpicture}
\node (P0) at (-90:0.71cm) {};
\node (P1) at (0:0cm) {$\mu$} ;
\node (P2) at (45:2cm) {$\alpha(z)\ \alpha(x)$};
\node (P3) at (45+90:1cm) {$\mu$};
\node (P4) at (45+90:2cm) {$x$};
\node (P5) at (90:1.41cm) {$y$};
\node (Q0) at (-90+76:2.9cm) {};
\node (Q1) at (0:2.82cm) {$\mu$} ;
\node (Q3) at (11.31:3.61cm) {$\mu$};
\node (Q4) at (18.43:4.47cm) {$z$};
\node (Q5) at (26.57:3.16cm) {$y$};
\node (M) at (0:1.1cm) {};
\node (N) at (0:1.73cm) {};
\draw
(P0) edge[-,>=angle 90] node {} (P1)
(P1) edge[-,>=angle 90] node {} (P2)
(Q0) edge[-,>=angle 90] node {} (Q1)
(Q1) edge[-,>=angle 90] node {} (P2)
(Q1) edge[-,>=angle 90] node {} (Q3)
(P1) edge[-,>=angle 90] node {} (P3)
(Q3) edge[-,>=angle 90] node {} (Q5)
(P3) edge[-,>=angle 90] node {} (P4)
(P3) edge[-,>=angle 90] node {} (P5)
(M) edge[-,>=angle 90] node {} (N)
(Q3) edge[-,>=angle 90] node {} (Q4);
\end{tikzpicture}

\end{center}

The identity obtained is: $$(xy)\alpha(z)-\alpha(x)(yz)=0,$$ which is the identity of Hom-associative algebras.

In the sequel, we denote the Hom-type associator by 
\begin{equation}\label{Hom-associator}(x,y,z)_\alpha=(xy)\alpha(z)-\alpha(x)(yz).
\end{equation}
\end{ex}

\begin{ex}
In case of Lie algebras there are two identities $[x,y]+[y,x]=0$ and $[[x,y],z]+[[y,z],x]+[[z,x],y]=0$ that is given by the following linear combination:

\begin{center}
\begin{tikzpicture}
\node (P0) at (-90:0.71cm) {};
\node (P1) at (0:0cm) {$\mu$} ;
\node (P2) at (45:2cm) {$x\ z$};
\node (P3) at (45+90:1cm) {$\mu$};
\node (P4) at (45+90:2cm) {$z\ y$};
\node (P5) at (90:1.41cm) {$z$};
\node (Q0) at (-90+76:2.9cm) {};
\node (Q1) at (0:2.82cm) {$\mu$} ;
\node (Q3) at (18.43:2.23cm) {$\mu$};
\node (Q4) at (18.43:4.47cm) {$y$};
\node (Q5) at (26.57:3.16cm) {$x$};
\node (O0) at (-90-76:2.9cm) {};
\node (O1) at (-180:2.82cm) {$\mu$} ;
\node (O3) at (180-11.31:3.61cm) {$\mu$};
\node (O4) at (180-18.43:4.47cm) {$x$};
\node (O5) at (180-26.57:3.16cm) {$y$};
\node (MPQ) at (0:1.1cm) {};
\node (NPQ) at (0:1.73cm) {};
\node (MOP) at (0:-1.1cm) {};
\node (NOP) at (0:-1.73cm) {};
\node (APQ) at (12.7:1.46cm) {};
\node (BPQ) at (-12.7:1.46cm) {};
\node (AOP) at (180-12.7:1.46cm) {};
\node (BOP) at (180+12.7:1.46cm) {};
\draw
(P0) edge[-,>=angle 90] node {} (P1)
(P1) edge[-,>=angle 90] node {} (P2)
(O0) edge[-,>=angle 90] node {} (O1)
(O1) edge[-,>=angle 90] node {} (P4)
(Q0) edge[-,>=angle 90] node {} (Q1)
(Q1) edge[-,>=angle 90] node {} (Q4)
(Q1) edge[-,>=angle 90] node {} (Q3)
(P1) edge[-,>=angle 90] node {} (P3)
(O1) edge[-,>=angle 90] node {} (O3)
(Q3) edge[-,>=angle 90] node {} (Q5)
(P3) edge[-,>=angle 90] node {} (P4)
(O3) edge[-,>=angle 90] node {} (O4)
(O3) edge[-,>=angle 90] node {} (O5)
(P3) edge[-,>=angle 90] node {} (P5)
(MPQ) edge[-,>=angle 90] node {} (NPQ)
(NOP) edge[-,>=angle 90] node {} (MOP)
(APQ) edge[-,>=angle 90] node {} (BPQ)
(AOP) edge[-,>=angle 90] node {} (BOP)
(Q3) edge[-,>=angle 90] node {} (P2);
\end{tikzpicture}

\end{center}

Every internal node in this trees is binary, hence $\sigma( N_j)-1=1$. In the first equation, there is no change, since the only internal node is comparable to both leaves in both trees. Nevertheless, one internal node in each tree of the second equation has a leaf not comparable to it. The final trees are then defined as:

\begin{center}
\begin{tikzpicture}
\node (P0) at (-90:0.71cm) {};
\node (P1) at (0:0cm) {$\mu$} ;
\node (P2) at (45:2cm) {$\hspace*{-0.5cm}\alpha(x)\ z$};
\node (P3) at (45+90:1cm) {$\mu$};
\node (P4) at (45+90:2cm) {\hspace*{-0.5cm}$\alpha(z)\ y$};
\node (P5) at (90:1.41cm) {$z$};
\node (Q0) at (-90+76:2.9cm) {};
\node (Q1) at (0:2.82cm) {$\mu$} ;
\node (Q3) at (18.43:2.23cm) {$\mu$};
\node (Q4) at (18.43:4.47cm) {$\alpha(y)$};
\node (Q5) at (26.57:3.16cm) {$x$};
\node (O0) at (-90-76:2.9cm) {};
\node (O1) at (-180:2.82cm) {$\mu$} ;
\node (O3) at (180-11.31:3.61cm) {$\mu$};
\node (O4) at (180-18.43:4.47cm) {$x$};
\node (O5) at (180-26.57:3.16cm) {$y$};
\node (MPQ) at (0:1.1cm) {};
\node (NPQ) at (0:1.73cm) {};
\node (MOP) at (0:-1.1cm) {};
\node (NOP) at (0:-1.73cm) {};
\node (APQ) at (12.7:1.46cm) {};
\node (BPQ) at (-12.7:1.46cm) {};
\node (AOP) at (180-12.7:1.46cm) {};
\node (BOP) at (180+12.7:1.46cm) {};
\draw
(P0) edge[-,>=angle 90] node {} (P1)
(P1) edge[-,>=angle 90] node {} (P2)
(O0) edge[-,>=angle 90] node {} (O1)
(O1) edge[-,>=angle 90] node {} (P4)
(Q0) edge[-,>=angle 90] node {} (Q1)
(Q1) edge[-,>=angle 90] node {} (Q4)
(Q1) edge[-,>=angle 90] node {} (Q3)
(P1) edge[-,>=angle 90] node {} (P3)
(O1) edge[-,>=angle 90] node {} (O3)
(Q3) edge[-,>=angle 90] node {} (Q5)
(P3) edge[-,>=angle 90] node {} (P4)
(O3) edge[-,>=angle 90] node {} (O4)
(O3) edge[-,>=angle 90] node {} (O5)
(P3) edge[-,>=angle 90] node {} (P5)
(MPQ) edge[-,>=angle 90] node {} (NPQ)
(NOP) edge[-,>=angle 90] node {} (MOP)
(APQ) edge[-,>=angle 90] node {} (BPQ)
(AOP) edge[-,>=angle 90] node {} (BOP)
(Q3) edge[-,>=angle 90] node {} (P2);
\end{tikzpicture}

\end{center}

The identities obtained are: $$[x,y]+[y,x]=0$$ $$[[x,y],\alpha(z)]+[[y,z],\alpha(x)]+[[z,x],\alpha(y)]=0,$$ which define exactly  Hom-Lie algebras.

In the sequel, we denote the Hom-type Jacobiator by 
\begin{equation}
\label{Hom-Jacobiator}J_\alpha(x,y,z)=[[x,y],\alpha(z)]+[[y,z],\alpha(x)]+[[z,x],\alpha(y)].
\end{equation}
\end{ex}

We will only consider cases of homogeneous identities in our examples; i.e., identities $f(x_1,\dots,x_n)$ such that $f(\lambda_1x_1,\dots,\lambda_nx_n)=\lambda_1^{p_1}\dots\lambda_n^{p_n}f(x_1,\dots,x_n)$. It may seem restrictive, but a wide range of examples of algebras fall into this definition, in particular any Sabinin algebra. The reason is to get free algebras which are $\mathbb{N}^+$-graded.

Comparing our procedure with the one used in  \cite{YauHomPROP}; in our case we reach an algorithm which computes the Hom-type identities by changing every instance of the identity map by $\alpha^n$; where $n$ depends on the arity of the operations of the algebra.

\begin{defn}
Let $(A,\{\mu_i\})$ be an algebra. Consider $\tau$ to be a tree representing some monomial in $A$ on elements $x_1,\dots,x_n$,  then we denote the monomial obtained by the previous procedure as $(x_1\otimes\dots\otimes x_n)_\tau^\alpha$.
\end{defn}

\begin{lem}\label{lema}
Consider $\mathcal{Q}$ to be  a variety of algebras. Let $(B,\{\mu_j\},\alpha_B)$ be a multiplicative Hom-$\mathcal{Q}$-algebra which is $\mathbb{N}^+$-graded. We  define the product $\mu_{j,\alpha}:B_{a_1}\otimes\dots\otimes B_{a_{n_j}}\to B_{\sum a_i}$ as  $$\mu_{j,\alpha}(x_1,\dots,x_{n_j})=\mu_j(\alpha_B^{(\sum_{i\ne 1} a_i)-n_j+1}(x_1),\dots,\alpha_B^{(\sum_{i\ne n_j} a_i)-n_j+1}(x_{n_j})),$$ for $a_i>0$.
Under these assumptions, $(B,\{\mu_{j,\alpha}\})$ is a $\mathcal{Q}$-algebra.

If $(B,\{\mu_{j,\alpha}\})$ is a $\mathcal{Q}$-algebra for some $(B,\{\mu_j\},\alpha_B)$, $\mathbb{N}^+$-graded multiplicative Hom-algebra; then $(B,\{\mu_j\},\alpha_B)$ is in fact a Hom-$\mathcal{Q}$-algebra.
\end{lem}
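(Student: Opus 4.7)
The plan is to establish, by induction on trees, a single formula that converts evaluation of any tree under the twisted products $\mu_{j,\alpha}$ into a Hom-type evaluation of the same tree. Concretely, for a tree $\tau$ with homogeneous inputs $\ell_k\in B_{a_k}$, I expect an identity of the shape
\begin{equation*}
\operatorname{ev}_{\mu_\alpha}\!(\tau)(\ell_1,\dots,\ell_n)\;=\;\bigl(\alpha_B^{F_1}(\ell_1)\otimes\cdots\otimes\alpha_B^{F_n}(\ell_n)\bigr)_{\tau}^{\alpha},
\end{equation*}
where $F_k=\sum_{k'\neq k}(a_{k'}-1)$. The decisive point is that $F_k$ depends only on the degrees of the other leaves and \emph{not} on the shape of $\tau$, so a common correction factor can be pulled out of every summand of an identity.

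Proof of this formula would proceed by induction on the depth of $\tau$. The leaf case is immediate. For the inductive step, unfold $\mu_{s,\alpha}$ at the root and apply the induction hypothesis to each child subtree $\tau_k$ of total degree $A_k$; then use multiplicativity of $\alpha_B$ to commute the outer twists $\alpha_B^{(\sum_{i\ne k}A_i)-n_s+1}$ past the root operation $\mu_s$ and push them down to the leaves. A bookkeeping calculation then matches the accumulated exponent on a leaf $\ell$ lying in $\tau_k$ with the sum of $n_{s_N}-1$ over internal nodes $N$ of $\tau$ not on the ancestor path of $\ell$ (which is exactly the Hom-type exponent prescribed by Definition \ref{Hom-type}) plus the residual grading correction $F_k$.

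Both claims of the lemma now follow by reading the formula in the two directions. For the forward direction, take any defining identity $\sum_iT_i=0$ of $\mathcal{Q}$; its Hom-type version $\sum_i(x_1\otimes\cdots\otimes x_n)_{T_i}^{\alpha}=0$ holds on $B$ by hypothesis, so substituting $x_k=\alpha_B^{F_k}(\ell_k)$ and invoking the formula produces the $\mathcal{Q}$-identity $\sum_i\operatorname{ev}_{\mu_\alpha}(T_i)(\ell_1,\dots,\ell_n)=0$ on homogeneous inputs, and hence on all of $B$ by multilinearity and the graded decomposition. The converse reads the same formula backwards: the $\mathcal{Q}$-identity for $\mu_\alpha$ delivers the Hom-$\mathcal{Q}$-identity evaluated on the corrected leaves $\alpha_B^{F_k}(\ell_k)$; specializing to $a_k=1$ makes every $F_k=0$ and yields the Hom-$\mathcal{Q}$-identity on $B_1$, from which the general graded case is recovered by varying the $a_k$ and again appealing to multilinearity together with multiplicativity of $\alpha_B$. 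The main obstacle I anticipate is precisely this combinatorial bookkeeping in the inductive step: the recursive accumulation of $\alpha$-exponents must separate cleanly into the tree-independent correction $F_k$ and the node-by-node Hom-type correction, and every other ingredient reduces to linear algebra or the defining properties of the grading and of $\alpha_B$.
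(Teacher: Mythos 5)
Your key formula is correct and is exactly the content that the paper compresses into its one--sentence proof (``follows from Definition \ref{Hom-type} and the definition of $\mu_\alpha$''): unfolding $\mu_{j,\alpha}$ recursively down a tree $\tau$ and using multiplicativity of $\alpha_B$ does yield $\bigl(\alpha_B^{F_1}(\ell_1)\otimes\cdots\otimes\alpha_B^{F_n}(\ell_n)\bigr)_\tau^\alpha$ with $F_k=\sum_{k'\neq k}(a_{k'}-1)$ independent of the shape of $\tau$; one checks this readily on $(xy)z$ versus $x(yz)$ with degrees $a,b,c$, where the leaves acquire the exponents $b+c-2,\ a+c-2,\ a+b-1$ in the first tree and $b+c-1,\ a+c-2,\ a+b-2$ in the second, matching the Hom-type placement of $\alpha$ on the leaf not comparable to the inner node plus the common correction $F_k$. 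Since $F_k$ is the same for every tree occurring in a given multilinear identity, your forward direction is sound: the Hom-$\mathcal{Q}$-identities hold on all of $B$, in particular on the corrected tuples, and this gives the $\mathcal{Q}$-identities for $\mu_{j,\alpha}$ on homogeneous elements and hence everywhere by linearity.

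The gap is in your last step for the converse. Reading the formula backwards gives the Hom-$\mathcal{Q}$-identities only on tuples of the special form $(\alpha_B^{F_1}(\ell_1),\dots,\alpha_B^{F_n}(\ell_n))$; taking all $a_k=1$ kills the corrections and yields the identities on $B_1^{\otimes n}$, but ``varying the $a_k$ and appealing to multilinearity together with multiplicativity'' does not produce them on an arbitrary homogeneous tuple with some $a_k>1$: for such degrees you only reach the image of $\alpha_B^{F_k}$ inside $B_{a_k}$, and a Hom-identity verified on $B_1$ does not propagate to higher degrees by linearity, since the identity is not linear in its inputs once products and powers of $\alpha_B$ intervene. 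As written, the converse therefore needs either surjectivity of $\alpha_B$ in positive degrees, or the observation that in the intended application $B$ is a free Hom-algebra and an identity need only be verified on the degree-one generators, where every $F_k=0$. The paper's own proof is a single sentence and is silent on this point as well, and the Main Theorem only ever invokes this direction on free algebras evaluated at generators; but your write-up should either add such a hypothesis or restrict the converse to the case it actually establishes.
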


\begin{proof}
The proof follows from Definition \ref{Hom-type} and the definition of $\mu_\alpha$ from the multilinear operator $\mu$.
\end{proof}

Our definition of Hom-type structures differs from some already defined Hom-algebras varieties; making $\alpha$ less generic in our definitions. The reason is that in our minds have been  universal enveloping structures and the multilinear operations. Here is an example of mismatching definitions, first we consider Lie triple systems and then 3-Lie algebras Hom-types:

\begin{ex}A Hom-Lts with this definition is given by a triple  $(L,[-,-,-],\alpha)$,  where
\begin{enumerate}
\item $[x,y,z]=-[y,x,z]$
\item $\sigma_{x,y,z}[x,y,z]=0$
\item $[\alpha^2(u),\alpha^2(v),[x,y,z]]=[[u,v,x],\alpha^2(y),\alpha^2(z)]+[\alpha^2(x),[u,v,y],\alpha^2(z)]+[\alpha^2(x),\alpha^2(y),[u,v,z]]$
\end{enumerate}

This definition differs from that in \cite{YauLTS}, as we are considering only the case where $\alpha_1=\alpha_2=\alpha^2$ in that paper. Nevertheless, if from a Hom-Lie algebra $(L,[\ ,\ ],\alpha)$ is defined a new triple product $[x,y,z]=[[x,y],\alpha(z)]$; then $(L,[\ ,\ ,\ ],\alpha)$ is a Hom-Lts in the sense defined above or $(L,[\ ,\,\ ],(\alpha^2,\alpha^2))$ is a Hom-Lts as defined in \cite{YauLTS}; meaning that our definition isn't as restrictive as it may seem.
\end{ex}

\begin{ex}A 3-Hom-Lie algebra with this definition is given by a triple  $(L,[-,-,-],\alpha)$, where
\begin{enumerate}
\item $[x,y,z]=-[y,x,z]$
\item $[x,y,z]=[y,z,x]$
\item $[\alpha^2(u),\alpha^2(v),[x,y,z]]=[[u,v,x],\alpha^2(y),\alpha^2(z)]+[\alpha^2(x),[u,v,y],\alpha^2(z)]+  [\alpha^2(x),\alpha^2(y),[u,v,z]]$
\end{enumerate}

Again this definition differs from that in \cite{Ata09}; as we are considering only the case where $\alpha_1=\alpha_2=\alpha^2$ in that paper.
\end{ex}

In both previous examples one can define the algebra variety using the following $\alpha$-twisted fundamental  identity:
\begin{align*}
& [\alpha(u),\alpha(v),[x,y,z]]= [[u,v,x],\alpha(y),\alpha(z)]+[\alpha(x),[u,v,y],\alpha(z)]+\\ & 
\ \ \ \ \ \ \ \ \ \ \ \ \ \ \ \ \ \ \ \  \ \ \ \ \ \ \ \  [\alpha(x),\alpha(y),[u,v,z]],
\end{align*}
but the results obtained in the following sections cannot be applied to this more general case; as there might not be an endomorphism $\beta$ such that $\alpha=\beta^2$.\\

The definition of Hom-Sabinin algebras results in:

\begin{defn}\label{defSab} A Sabinin algebra of Hom-type, or Hom-Sabinin algebra $(S,\{<-;-,->\}_n,\{\Phi_{n,m}\}_{n,m},\alpha)$ is defined as:

\begin{itemize}
\item a vector space $S$,
\item a family of maps $<-;-,->:S^{\otimes n}\otimes S\otimes S\to S$ for $n\geq 0$ and $\Phi_{n,m}:S^{\otimes n}\otimes S^{\otimes m}\to S$ for $n>0$ and $m>1$,
\item an endomorphism $\alpha$ of the algebra $(S,\{<-;-,->:S^{\otimes n}\otimes S\otimes S\to S\}_n,\{\Phi_{n,m}:S^{\otimes n}\otimes S^{\otimes m}\to S\}_{n,m})$,
\end{itemize}

such that the following identities hold:
\begin{eqnarray}
& \label{Hsab1}<x;a,b>+<x;b,a>=0,\\
&\label{Hsab2} <x[a,b]y;c,e>+\sum<\alpha^{k}(x_{(1)})<x_{(2)};a,b>\alpha^{k}(y);\alpha^{k}(c),\alpha^{k}(e)>=0, \\
&\label{Hsab3} \sigma_{a,b,c}(<xc;a,b>+\sum<\alpha^{k}(x_{(1)});<x_{(2)};a,b>,\alpha^{k}(c)>)=0,\\
&\label{Hsab4}  \Phi(x,y)=\Phi(\tau\cdot x,\sigma\cdot y),
\end{eqnarray} where $\tau\in S_n$,  $\sigma\in S_m$
and  $k=|x_{(2)}|+1$.
\end{defn}

The following result provides
the main theorem of this paper; not because of its elaborate proof, which is rather easy, but because it has very deep consequences.

\begin{thm}[Main Theorem]
Consider some variety of algebras, $\mathcal{Q}$, where every free Hom-$\mathcal{Q}$-algebra is $\mathbb{N}^+$-graded.
Then any Hom-$\mathcal{Q}$-algebra satisfies the Hom-type identities satisfied by the $\mathcal{Q}$-algebras.
\end{thm}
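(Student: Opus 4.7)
The plan is to pass to a free Hom-$\mathcal{Q}$-algebra, where the $\mathbb{N}^+$-grading makes Lemma \ref{lema} available, and then use universality to spread the resulting identity to every Hom-$\mathcal{Q}$-algebra. Concretely, let $\mathrm{Id}$ be any (homogeneous) identity satisfied by every $\mathcal{Q}$-algebra, and let $\mathrm{Id}^\alpha$ denote its Hom-type counterpart produced by the leaf-twisting procedure of Definition \ref{Hom-type}. The goal is to prove that $\mathrm{Id}^\alpha$ holds in every multiplicative Hom-$\mathcal{Q}$-algebra.

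First I would fix a set of formal variables $X$ and form the free multiplicative Hom-$\mathcal{Q}$-algebra $F = F(X)$; by the hypothesis of the theorem, $F$ is $\mathbb{N}^+$-graded, the degree of a generator being $1$ and summing along monomials. Lemma \ref{lema} then furnishes twisted operations $\mu_{j,\alpha}$ on $F$ which make $(F,\{\mu_{j,\alpha}\})$ into an ordinary $\mathcal{Q}$-algebra, so the identity $\mathrm{Id}$, being satisfied by every $\mathcal{Q}$-algebra, holds in $(F,\{\mu_{j,\alpha}\})$.

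The heart of the argument is the observation that, when $\mathrm{Id}$ is expanded on generators of $F$ via the explicit formula for $\mu_{j,\alpha}$, the result is exactly $\mathrm{Id}^\alpha$ expressed in terms of the original operations $\mu_j$ and the twisting map $\alpha_F$. Tree by tree, a single application of $\mu_{j,\alpha}(x_1,\ldots,x_{n_j})$ already redistributes powers of $\alpha_F$ onto each input in the manner prescribed by the grading; composing such operations, the powers of $\alpha_F$ that accumulate at a given leaf can be pushed outward using multiplicativity of $\alpha_F$, and the resulting exponent counts exactly the internal nodes to which the leaf is not comparable, which is the rule of Definition \ref{Hom-type}. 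Hence $\mathrm{Id}^\alpha$ holds in $F$, and then in every multiplicative Hom-$\mathcal{Q}$-algebra $A$ by lifting a tuple of elements of $A$ to generators via a homomorphism $F \to A$.

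The main obstacle is precisely the combinatorial bookkeeping in the previous paragraph: one must verify that iteratively composing the twisted operations $\mu_{j,\alpha}$ produces at each leaf the same power of $\alpha_F$ that Definition \ref{Hom-type} attaches there. I expect the cleanest route is induction on tree depth, where at each inductive step the grading records exactly how many leaves sit below each internal node (and hence how many do not), while multiplicativity of $\alpha_F$ lets all internally sprouted twistings be collected on the leaves. Once the identification holds for a single monomial tree, linearity extends it to an arbitrary identity $\mathrm{Id}$, completing the proof.
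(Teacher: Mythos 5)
Your proposal follows essentially the same route as the paper's proof: pass to the free Hom-$\mathcal{Q}$-algebra, use its $\mathbb{N}^+$-grading to invoke Lemma \ref{lema} and obtain an ordinary $\mathcal{Q}$-algebra structure via the twisted operations $\mu_{j,\alpha}$, and then unfold those operations to recognize the Hom-type identity, transferring to arbitrary Hom-$\mathcal{Q}$-algebras by freeness. The only difference is that you spell out the combinatorial bookkeeping (matching accumulated powers of $\alpha$ at each leaf with the rule of Definition \ref{Hom-type}) which the paper leaves implicit in the phrase ``unfolding the definition of $\mu_{j,\alpha}$.''
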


\begin{proof}
By our first assumption, we only need to prove the result for free algebras.

Consider any free Hom-$\mathcal{Q}$-algebra $B(V,\alpha)$. This Hom-algebra is the quotient of the free Hom-algebra $F(V,\alpha)$ by the ideal generated by the identities that define Hom-$\mathcal{Q}$-algebras. By our second assumption, it is graded by $\mathbb{N}^+$. It is then possible to define $\{\mu_{j,\alpha}\}$ as in the previous results.

By Lemma \ref{lema}, $(B(V),\mu_{j,\alpha})$ is a $\mathcal{Q}$-algebra; and hence satisfies all identities of $\mathcal{Q}$-algebras. The result follows from unfolding the definition of $\mu_{j,\alpha}$ in any identity of $\mathcal{Q}$-algebras satisfied by $(B(V),\mu_{j,\alpha})$.
\end{proof}

Note that, as stated before, if the defining relations of the variety $\mathcal{Q}$ are given by homogeneous multilinear identities, the free $\mathcal{Q}$-algebras are  $\mathbb{N}^+$-graded and so are the free Hom-$\mathcal{Q}$-algebras. In conclusion, this result can be directly applied to those algebra classes.

One may provide a generalization of the Yau's twisting principle, given in \cite{DYHomology,YauHomPROP}.

\begin{prop}\label{endoCons}
Let $(S,\alpha)$ be a Hom-Sabinin algebra and  $\beta:S\to S$ be an endomorphism of the Hom-Sabinin algebra. \\
 Then $(S,\{\beta^{n+1}\circ<-;-,->\}_n,\{\beta^{n+m-1}\circ \Phi_{n,m}\}_{n,m},\beta\circ\alpha)$ is a Hom-Sabinin algebra.
\end{prop}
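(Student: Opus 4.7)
The proposition is a Yau-type twisting principle: the exponents $n+1$ and $n+m-1$ are precisely one less than the arities of $<-;-,->$ and $\Phi_{n,m}$, so the construction fits the standard pattern underlying Lemma~\ref{lema}. The plan is to verify each defining identity (\ref{Hsab1})--(\ref{Hsab4}) for the twisted data, using two properties of $\beta$: since $\beta$ is an endomorphism of the Hom-Sabinin algebra, it commutes with every $<-;-,->$, every $\Phi_{n,m}$ and with $\alpha$. In particular $(\beta\circ\alpha)^k=\beta^k\circ\alpha^k$ for every $k\geq 0$, and $\beta$ may be pushed through any composition of Sabinin operations.

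Identities (\ref{Hsab1}) and (\ref{Hsab4}) are purely linear (antisymmetry in the last two slots, invariance under permutations of tensor factors), so they survive post-composition with any linear map, and in particular with $\beta^{n+1}$ or $\beta^{n+m-1}$. Hence they hold for the new operations automatically.

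The real content lies in (\ref{Hsab2}) and (\ref{Hsab3}). I would first carry out (\ref{Hsab3}) in detail: with $|x_{(1)}|=p$, $|x_{(2)}|=q$, $p+q=n-1$, the Hom-Sabinin exponent is $k=q+1$. In the twisted identity the outer bracket contributes a factor $\beta^{p+1}$, while the twisted inner bracket already carries an outer $\beta^{q+1}=\beta^{k}$, and the arguments $(\beta\alpha)^k(x_{(1)})$ and $(\beta\alpha)^k(c)$ expand to $\beta^k\alpha^k(x_{(1)})$ and $\beta^k\alpha^k(c)$. Thus every argument of the outer bracket carries a common factor $\beta^k$, which, because $\beta$ is a Sabinin-algebra endomorphism, can be collected and pulled outside. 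The total $\beta$-power on each summand becomes $\beta^{p+1+k}=\beta^{n+1}$, matching the factor coming from the first term $<xc;a,b>$; hence $\beta^{n+1}$ factors out of the whole cyclic sum $\sigma_{a,b,c}$, and what remains is exactly identity (\ref{Hsab3}) for $(S,\alpha)$, which vanishes. Identity (\ref{Hsab2}) is handled identically, the only extra bookkeeping being the extra factor that must be absorbed into $\alpha^k(y)$.

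The sole genuine obstacle is the combinatorial bookkeeping: one must check that on every summand the Yau exponent of the outer operation plus the power of $\beta$ introduced by $(\beta\alpha)^k$ equals a single power of $\beta$ independent of the chosen splitting $x=x_{(1)}\otimes x_{(2)}$. This is precisely what the choice $k=|x_{(2)}|+1$ in Definition~\ref{defSab} arranges, and once the powers align the identities for the twisted structure collapse, via the endomorphism property of $\beta$, to the identities already satisfied by $(S,\alpha)$.
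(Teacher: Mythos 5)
Your verification is correct and is exactly the argument the paper intends: the paper states Proposition \ref{endoCons} without proof, flagging it only as a generalization of Yau's twisting principle, and your exponent bookkeeping (each summand of (\ref{Hsab2})--(\ref{Hsab3}) acquiring $\beta^{p+1}\cdot\beta^{q+1}=\beta^{n+1}$ precisely because $k=|x_{(2)}|+1$ and because $\beta$ commutes with $\alpha$ and with all operations) is the whole content. The one point you leave implicit is that $\beta\circ\alpha$ must also be an endomorphism of the \emph{new} operations, as Definition \ref{defSab} requires; this follows by the same one-line computation, since $(\beta\alpha)(\beta^{n+1}<x;a,b>)=\beta^{n+2}<\alpha^{\otimes n}x;\alpha a,\alpha b>=\beta^{n+1}<(\beta\alpha)^{\otimes n}x;(\beta\alpha)a,(\beta\alpha)b>$.
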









In particular, one may construct a Hom-Sabinin algebra starting from a Sabinin algebra and a Sabinin algebra endomorphism.
\begin{cor}
Let $(S,\{<-;-,->:S^{\otimes n}\otimes S\otimes S\to S\}_n)$ be a Sabinin algebra and $\alpha:S\to S$ be a Sabinin algebra endomorphism.\\  Then $(S,\{\alpha^{n+1}\circ<-;-,->:S^{\otimes n}\otimes S\otimes S\to S\}_n,\{\alpha^{n+m-1}\circ \Phi_{n,m}\}_{n,m} ,\alpha)$ is a Hom-Sabinin algebra.
\end{cor}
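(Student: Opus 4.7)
The plan is to deduce this corollary as an immediate special case of Proposition \ref{endoCons}. First I would observe that any ordinary Sabinin algebra is canonically a Hom-Sabinin algebra whose twisting map is the identity: setting $\alpha = \mathrm{id}_S$ in identities (\ref{Hsab1})--(\ref{Hsab4}), every factor $\alpha^k$ collapses to the identity and the defining relations of Hom-Sabinin algebras reduce (with the convention that $x[a,b]y$ abbreviates $xaby - xbay$) to the classical Sabinin axioms listed in the introduction. Hence $(S,\{<-;-,->\}_n,\{\Phi_{n,m}\}_{n,m},\mathrm{id}_S)$ is a valid Hom-Sabinin algebra.

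Next, I would verify that the hypothesized Sabinin endomorphism $\alpha$ qualifies as an endomorphism of the Hom-Sabinin algebra $(S,\dots,\mathrm{id}_S)$. Being a Sabinin endomorphism means $\alpha$ commutes with every operation $<-;-,->$ and $\Phi_{n,m}$, which is precisely what is required; the compatibility with the twisting map is automatic because $\alpha \circ \mathrm{id}_S = \mathrm{id}_S \circ \alpha$.

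Having established this, I would apply Proposition \ref{endoCons} to the Hom-Sabinin algebra $(S,\dots,\mathrm{id}_S)$ with the endomorphism $\beta = \alpha$. The conclusion of the proposition produces the Hom-Sabinin algebra
\[
\bigl(S,\ \{\alpha^{n+1}\circ<-;-,->\}_n,\ \{\alpha^{n+m-1}\circ\Phi_{n,m}\}_{n,m},\ \alpha\circ\mathrm{id}_S\bigr),
\]
and since $\alpha\circ\mathrm{id}_S = \alpha$, this is exactly the structure claimed in the corollary.

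The main --- and essentially only --- obstacle is the first step: confirming carefully that the Hom-Sabinin identities collapse cleanly to the Sabinin identities when $\alpha = \mathrm{id}$. This is a matter of reading the notation consistently across the two definitions and checking each of the four Hom-Sabinin axioms in turn, but no calculation is required beyond substituting $\mathrm{id}$ for each $\alpha^k$. Once that reduction is in place, the rest of the argument is a one-line invocation of the twisting principle already proved.
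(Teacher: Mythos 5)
Your proposal is correct and follows exactly the route the paper intends: the corollary is stated as an immediate consequence ("In particular\dots") of Proposition \ref{endoCons}, obtained by viewing the Sabinin algebra as a Hom-Sabinin algebra with twisting map $\mathrm{id}_S$ and applying the proposition with $\beta=\alpha$. Your additional care in checking that the Hom-Sabinin identities collapse to the classical ones when $\alpha=\mathrm{id}$ is a sensible verification that the paper leaves implicit.
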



\section{Consequences of the Main Theorem}

\subsection{Hom-Sabinin algebra subclasses}

Lie algebras, Malcev algebras and  Bol algebras are particular examples of Sabinin algebras. The natural question then is whether the Hom-type algebras of Lie, Malcev and Bol are Hom-Sabinin and the answer is affirmative.

\begin{ex}
Let $(L,[\ ,\ ],\alpha)$ be a multiplicative  Hom-Lie algebra. Define
\begin{eqnarray*}
&& <1;a,b>=<a,b>=-[a,b], \\&&  <x;a,b>=0,\\ && \Phi(x,y)=0,
\end{eqnarray*}
 for $a,b\in L$ and $x,y\in L^{\otimes n}$, where $n>0$. \\ Then $L$ becomes a Hom-Sabinin algebra.
\end{ex}

\begin{ex} 

A Hom-Malcev algebra, see  \cite{Elhamda-Mak,Yau12},  is a Hom-algebra $(M,[\ , \ ],\alpha )$ where $ [\ ,\  ]:M\times M\rightarrow M$ is a skewsymmetric bilinear map and $\alpha : M \rightarrow  M$  an algebra map with the respect to the bracket  satisfying for all $x, y, z \in M$
\begin{equation}\label{MalcevIdentity}
J_\alpha(\alpha(x), \alpha(y), [x, z]) = [J_\alpha(x, y, z), \alpha^2 (x)] ,
\end{equation} where $J_\alpha$ is the Hom-type Jacobiator.

Let $(M,[\ ,\ ],\alpha)$ be a Hom-Malcev algebra.  Define 
\begin{eqnarray*}
 &&<1;a,b>=<a,b>=-[a,b] ,\\ &&<c;a,b>=-\dfrac{1}{3}J_\alpha(a,b,c),\\ &&<xc;a,b>=\sum<\alpha^{|x_{(2)}|+1}(x_{(1)});\alpha^{|x_{(2)}|+1}(c),<x_{(2)};a,b>>,\\&&\Phi(x,y)=0,
\end{eqnarray*}
 for $a,b,c\in M$ and $x,y\in M^{\otimes n}$, where $n>0$. \\ Then $M$ becomes a Hom-Sabinin algebra.
\end{ex}

\begin{ex} 
A Hom-Bol algebra (see \cite{Attan-Issa}) is a quadruple  $(M, [\ ,  \ ],\{ ,\ ,\  , \} , \alpha )$ where $ [\, , \  ]:M\times M\rightarrow M$  is a  skewsymmetric bilinear map,   $ \{ \  , \  , \ \} :M\times M\times M \rightarrow M$    is a trilinear  map and $\alpha : M \rightarrow  M$  an algebra map with the respect to the brackets  satisfying for all $x, y, z, u, v,w  \in M$
\begin{eqnarray}\label{BolIdentity1}
& \{ x, y, z \}=-\{ y,x, z \}, \\
& \{x, y, z\}+\{z, x, y\}+\{y, z, x\}=0,\\
& \{\alpha (x), \alpha (y), [u , v] \} = [ \{x, y, u\} , \alpha^2(v) ] +[ \alpha^2 (u) , \{x, y, v\}]  + \{ \alpha (u),\alpha(v), [x , y]\}\nonumber \\ & - [[\alpha(u),\alpha(v) ],[ \alpha(x),\alpha(y)]],\\
& \{\alpha^2(x), \alpha^2(y), \{u, v, w\}\} = \{\{x, y, u\}, \alpha^2(v), \alpha^2(w)\} + \{\alpha^2(u), \{x, y, v\}, \alpha^2(w)\} \nonumber \\ &+\{\alpha^2(u), \alpha^2(v), \{x, y, w\}\} .
\end{eqnarray}

Let $(B,[\ ,\ ],[\ ,\ ,\ ],\alpha)$ be a left Hom-Bol algebra. Define  
\begin{eqnarray*}
&& <1;a,b>=<a,b>=-[a,b],\\&&<c;a,b>=\{a,b,c\}-[[a,b],\alpha(c)],\\&&<cx;a,b>=-\sum<\alpha^{|x_{(2)}|+1}(x_{(1)});\alpha^{|x_{(2)}|+1}(c),<x_{(2)};a,b>>,\\&&\Phi(x,y)=0,
\end{eqnarray*} for $a,b,c\in B$ and $x,y\in B^{\otimes n}$, where $n>0$.\\ Then $B$ becomes a Hom-Sabinin algebra.
\end{ex}

\begin{ex}
A Hom-Lie Yamaguti algebra (see \cite{Gaparayi-Issa}) is a quadruple  $(A, [\ ,  \ ],\{ ,\ , , \} , \alpha )$ where $ [\, , \  ]:A\times A\rightarrow A$  is a  skewsymmetric bilinear map,   $ \{ \ ,\ , \ \} :A\times A\times A \rightarrow A$ is a trilinear  map and $\alpha : A \rightarrow  A$  an algebra map with the respect to the brackets   satisfying  for all $x, y, z, u, v  \in A$
\begin{eqnarray}\label{BolIdentity1}
& \{ x, y, z \}=-\{ y,x, z \}, \\
&  [[x,y],\alpha(z)]+ [[z,x],\alpha(y)]+ [[y,z],\alpha(x)]\nonumber \\ & +\{x, y, z\}+\{z, x, y\}+\{y, z, x\}=0,\\
& \{ [x, y],\alpha(z),\alpha(u)\}+ \{ [z, x],\alpha(y),\alpha(u)\}+ \{ [y, z],\alpha(x),\alpha(u)\}=0,\\
& \{\alpha(x),\alpha(y),[u,v]\} = [\{x,y,u\},\alpha^2(v)] + [\alpha^2(u),\{x,y,v\}], \\
& \{ \alpha^2(u),\alpha^2(v),\{x,y,z\}\}=\{\{u,v,x\},\alpha^2 (y),\alpha^2(z)\}+ \{\alpha^2(x),\{u,v,y\},\alpha^2(z)\}\nonumber\\ 
& +\{\alpha^2(x),\alpha^2 (y),\{u,v,z\}\}.
\end{eqnarray}

Let $(A,[\ ,\ ],\{\ ,\ ,\ \},\alpha)$ be a Hom-Lie-Yamaguti algebra. Define 
\begin{eqnarray*}
 && <1;a,b>=<a,b>=-[a,b],\\&&<c;a,b>=\{a,b,c\},\\&&<xc;a,b>=<\alpha(x);\alpha(c),[a,b]>\\ && \quad\quad  +\sum<\alpha^{|x_{(2)}|+1}(x_{(1)});\alpha^{|x_{(2)}|+1}(c),<x_{(2)};a,b>>,\\&&\Phi(x,y)=0,
 \end{eqnarray*} for $a,b,c\in A$ and $x,y\in A^{\otimes n}$, where $n>0$. \\Then $A$ becomes a Hom-Sabinin algebra.
\end{ex}

\subsection{Hom-Akivis algebras and Hom-BTQQ algebras}

There are examples of more algebras where their Hom-type structure has been defined and that are related to Sabinin algebras, one such examples is Akivis algebras; since every Sabinin algebra is an Akivis algebra. There are also examples of algebras related to Sabinin algebras whose Hom-type structure have not yet been defined; for instance, BTQQ algebras. Their name comes from the fact that they have a Binary, a Ternary and two Quaternary products. This algebras appeared when considering algebras that are Sabinin up to some degree $d$; i.e., they satisfy the Sabinin defining equations that involve up to $d$-linear operations and forget about the rest. This type of algebras were worked on up to degree $4$ in \cite{SaraBTQQ}, using computational methods.

In this setting, Sabinin algebras of degree $2$ are skew-symmetric algebras, Sabinin algebras of degree $3$ are Akivis algebras and Sabinin algebras of degree $4$ are BTQQ algebras. 

\begin{defn}
An Akivis algebra is a vector space $V$ endowed with two linear maps $[\ ,\ ]:V^{\otimes 2}\to V$ and $(\ ,\ ,\ ):V^{\otimes 3}\to V$ such that the following identities hold:
\begin{eqnarray}
& [a,b]+[b,a]=0,\\ &
[[a,b],c]+[[b,c],a]+[[c,a],b]=   (a,b,c)-(a,c,b)-(b,a,c)+(b,c,a)\nonumber \\ & +(c,a,b)-(c,b,a).
\end{eqnarray}
\end{defn}

\begin{defn}
A BTQQ algebra is a vector space $V$ endowed with four linear maps $[\ ,\ ]:V^{\otimes 2}\to V$, $(\ ,\ ,\ ):V^{\otimes 3}\to V$, $\{\ ,\ ,\ ,\ \}:V^{\otimes 4}\to V$ and $[|\ ,\ ,\ ,\ |]:V^{\otimes 4}\to V$ such that $(V,[\ ,\ ],(\ ,\ ,\ ))$ is an Akivis algebra and the following identities hold:
\begin{eqnarray}
& ([a,b],c,d)-[a,(b,c,d)]+[b,(a,c,d)]=\{a,b,c,d\}-\{b,a,c,d\},\\
& (a,[b,c],d)-[b,(a,c,d)]+[c,(a,b,d)]=[|a,b,c,d|]-[|a,c,b,d|],\\ 
& [b,(a,c,d)]-[b,(a,d,c)]-(a,b,[c,d])=   \{a,b,c,d\}-\{a,b,d,c\} \nonumber\\ & -[|a,b,c,d|]+[|a,b,d,c|].
\end{eqnarray}
\end{defn}

\begin{lem}\cite{SaraBTQQ}
BTQQ algebras are equivalent to Sabinin algebras of degree $4$.
\end{lem}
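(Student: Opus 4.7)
The plan is to establish the equivalence by producing an explicit dictionary between the generators and defining relations of the two varieties and then matching them. First I would enumerate all Sabinin operations of total arity at most $4$: the binary bracket $<a,b>$, the ternary $<c;a,b>$, the quaternary $<cd;a,b>$, together with the symmetric operations $\Phi_{n,m}$ for which $n+m\le 4$, $n\ge 1$ and $m\ge 2$, namely $\Phi_{1,2}$, $\Phi_{1,3}$ and $\Phi_{2,2}$. The first Sabinin identity forces skew-symmetry of each $<x;-,->$ in the last two arguments, and the fourth identity forces the prescribed symmetry types of each $\Phi_{n,m}$; these account for all the symmetry information carried by the BTQQ signature.

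Next I would translate these into the four BTQQ operations. The identification $[a,b]=-<a,b>$ handles the binary data, and the Akivis ternary product $(a,b,c)$ is obtained by combining $<c;a,b>$ with $\Phi_{1,2}(c;a,b)$, since a general ternary operation decomposes uniquely into a part skew in the last two slots and a part symmetric in them. Likewise the two quaternary BTQQ operations $\{a,b,c,d\}$ and $[|a,b,c,d|]$ package $<cd;a,b>$ together with $\Phi_{1,3}$ and $\Phi_{2,2}$, the two distinct BTQQ operations corresponding to the two natural symmetry types visible in the displayed BTQQ identities (symmetrisation in $a,b$ versus in $c,d$).

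With the dictionary fixed, I would verify the defining relations in both directions. The Akivis identity follows from the third Sabinin identity specialised at $x=1$: the cyclic sum $\sigma_{a,b,c}$ of $<c;a,b>+<<a,b>,c>$ vanishes, and substituting the dictionary produces exactly the Akivis Jacobi--associator relation displayed in the paper. The three quaternary BTQQ identities then come from the Sabinin identities in degree $4$: the second identity with $x=y=1$ (so that $xaby$ reduces to $ab$) produces the relation $<ab;c,e>-<ba;c,e>+<<a,b>;c,e>=0$, while the third identity with $|x|=1$, upon expanding the coproduct $\sum x_{(1)}\otimes x_{(2)}=x\otimes 1+1\otimes x$, produces the relation $\sigma_{a,b,c}(<dc;a,b>+<d;<a,b>,c>+<<d;a,b>,c>)=0$. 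Separating the skew and symmetric components of each of these two relations and substituting the dictionary gives the three displayed BTQQ identities.

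The main obstacle is the combinatorial bookkeeping in this last step: one must fix the precise split of $<cd;a,b>$, $\Phi_{1,3}$ and $\Phi_{2,2}$ among the two quaternary operations $\{-,-,-,-\}$ and $[|-,-,-,-|]$ so that the two Sabinin relations above land on the three BTQQ identities with exactly the coefficients stated. Once this matching is pinned down, both directions of the equivalence are immediate: the restriction of a Sabinin algebra to operations of arity at most $4$ is a BTQQ algebra by construction, and conversely every BTQQ algebra extends to a unique Sabinin structure truncated at degree $4$ satisfying all Sabinin identities in which only operations of arity $\le 4$ appear, which is the content of the statement recorded in \cite{SaraBTQQ}.
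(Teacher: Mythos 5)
This lemma is not proved in the paper at all: it is imported verbatim from \cite{SaraBTQQ}, where the equivalence was established by an explicit, essentially computational comparison of the multilinear components of the free BTQQ algebra and the free degree-$4$ Sabinin algebra. Your outline reconstructs the right strategy, and your low-degree specializations of the Sabinin identities are correct: identity \eqref{Hsab2} at $x=y=1$ does give $<ab;c,e>-<ba;c,e>+<{<a,b>};c,e>=0$, and identity \eqref{Hsab3} at $|x|=1$ does give $\sigma_{a,b,c}(<dc;a,b>+<d;<a,b>,c>+<{<d;a,b>},c>)=0$. The degree-$3$ part of your dictionary (splitting a general ternary operation into the component skew in two slots, matched with $<c;a,b>$, and the component symmetric in those slots, matched with $\Phi_{1,2}$) is also the standard one.

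The genuine gap is that you stop exactly where the content of the lemma begins. You yourself flag ``the combinatorial bookkeeping'' of splitting $<cd;a,b>$, $\Phi_{1,3}$ and $\Phi_{2,2}$ between $\{-,-,-,-\}$ and $[|-,-,-,-|]$ as the main obstacle and then do not carry it out; but exhibiting that dictionary and checking that the two Sabinin relations above transform into precisely the three displayed BTQQ identities (with the stated coefficients), and conversely that the BTQQ identities imply \emph{all} consequences of the Sabinin axioms involving only operations of arity at most $4$, is the entire theorem. The converse direction is the delicate one: you assert that every BTQQ algebra ``extends to a unique Sabinin structure truncated at degree $4$,'' but this is exactly what must be proved, and it requires verifying that no further independent identities in degree $4$ follow from the Sabinin axioms beyond those you listed (for instance, the constraint that identity \eqref{Hsab2} places on the component of $<cd;a,b>$ skew in $c,d$ must be accounted for when counting the free quaternary data). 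As written, your proposal is a correct plan whose decisive step is deferred to the very reference the lemma cites, so it does not constitute an independent proof.
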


If we consider  Hom-Sabinin algebras of degree $3$, one gets Hom-Akivis algebra, as  mentioned in \cite{IssaAkivis}. Consider the identity   \eqref{Hsab3} on the definition of a Hom-Sabinin algebra, applied to three arbitrary elements $a,b,c$ and $x=1$, one gets  the Hom-Akivis identity. Hom-Sabinin algebras of degree $4$  leads to  Hom-BTQQ algebras that are defined as:

\begin{defn}
A Hom-BTQQ algebra is a vector space $V$ endowed with four linear maps $[\ ,\ ]:V^{\otimes 2}\to V$, $(\ ,\ ,\ ):V^{\otimes 3}\to V$, $\{\ ,\ ,\ ,\ \}:V^{\otimes 4}\to V$ and $[|\ ,\ ,\ ,\ |]:V^{\otimes 4}\to V$; and an endomorphism $\alpha:V\to V$s such that the following equations hold:
\begin{eqnarray}
& [a,b]+[b,a]=0,\\
& [[a,b],\alpha (c)]+[[b,c],\alpha (a)]+[[c,a],\alpha (b)]=(a,b,c)-(a,c,b)-(b,a,c)\nonumber \\ & +(b,c,a)+(c,a,b)-(c,b,a),\\
& ([a,b],\alpha (c),\alpha (d))-[\alpha^2 (a),(b,c,d)]+[\alpha^2 (b),(a,c,d)]= \nonumber \\&\{a,b,c,d\}-\{b,a,c,d\},\\
& (\alpha (a),[b,c],\alpha (d))-[\alpha^2 (b),(a,c,d)]+[\alpha^2 (c),(a,b,d)]=\nonumber \\ & [|a,b,c,d|]-[|a,c,b,d|]\\ 
& [\alpha^2 (b),(a,c,d)]-[\alpha^2 (b),(a,d,c)]-(\alpha (a),\alpha (b),[c,d])=\nonumber \\ & \{a,b,c,d\}-\{a,b,d,c\}-[|a,b,c,d|]+[|a,b,d,c|].
\end{eqnarray}
\end{defn}

\begin{thm}
Hom-BTQQ algebras are equivalent to Hom-Sabinin algebras of degree $4$.
\end{thm}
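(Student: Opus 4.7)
The plan is to bootstrap the statement from its non-Hom counterpart (the preceding Lemma of \cite{SaraBTQQ}: BTQQ algebras are equivalent to Sabinin algebras of degree $4$) by passing through Lemma \ref{lema} in both directions. First I would observe that both the variety of Hom-BTQQ algebras and the variety of Hom-Sabinin algebras of degree $4$ are defined by homogeneous multilinear identities, so their free algebras are $\mathbb{N}^+$-graded, and hence Lemma \ref{lema} together with the Main Theorem applies to both. In particular, any Hom-BTQQ algebra $V$ can be untwisted, via the $\mu_{j,\alpha}$ construction, into an ordinary algebra on the same underlying vector space whose four operations satisfy the plain BTQQ identities, and symmetrically for the Hom-Sabinin side.

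Next I would set up the explicit dictionary between the two sets of operations, copying the one used in \cite{SaraBTQQ} for the non-Hom situation: the binary product is $[a,b]=-\langle;a,b\rangle$, the ternary product $(a,b,c)$ is obtained from $\langle c;a,b\rangle$, and the two quaternary products $\{a,b,c,d\}$ and $[|a,b,c,d|]$ correspond respectively to a piece of $\langle cd;a,b\rangle$ and to $\Phi_{2,2}$. Under this dictionary the five Hom-BTQQ identities displayed above are precisely the Hom-type versions, in the sense of Definition \ref{Hom-type}, of the Sabinin relations of Definition \ref{defSab} truncated to operations of arity at most $4$. With the dictionary in place the argument proceeds as a diagram chase: given a Hom-BTQQ algebra on $V$, Lemma \ref{lema} untwists it to a BTQQ algebra on $V$; the \cite{SaraBTQQ} Lemma converts this to a Sabinin algebra of degree $4$; and Lemma \ref{lema} applied in reverse equips $V$ with a Hom-Sabinin algebra structure of degree $4$ for the same $\alpha$. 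The converse implication is obtained by running the same chain backwards.

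The main obstacle, and essentially the only point where computation is hidden, is verifying that the powers of $\alpha$ inserted by $\mu_{j,\alpha}$ in Lemma \ref{lema} match the powers of $\alpha$ actually occurring in the Hom-BTQQ identities. Since the operations have distinct arities $2,3,4$, the exponent attached at each leaf by the twisting formula varies from identity to identity, and one must check line by line that the Hom-type shift produced by Definition \ref{Hom-type} coincides with the shift written in the statement of the definition of Hom-BTQQ algebras. This check is however entirely mechanical: it is exactly the assertion of the Main Theorem applied to the variety of BTQQ algebras, so no computation beyond invoking that theorem is required.
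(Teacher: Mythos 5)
Your overall strategy --- transfer the classical equivalence of \cite{SaraBTQQ} through the Main Theorem --- is exactly what the paper does (its entire proof is the single sentence that the argument of \cite{SaraBTQQ} can be copied in the Hom-type case using the Main Theorem). However, the specific mechanism you propose for the transfer does not work as stated. You claim that ``any Hom-BTQQ algebra $V$ can be untwisted, via the $\mu_{j,\alpha}$ construction, into an ordinary algebra on the same underlying vector space,'' and later that Lemma \ref{lema} ``applied in reverse'' retwists. But Lemma \ref{lema} only applies to \emph{$\mathbb{N}^+$-graded} multiplicative Hom-algebras: the exponents $(\sum_{i\ne k} a_i)-n_j+1$ in the definition of $\mu_{j,\alpha}$ depend on the degrees $a_i$ of the arguments, so the untwisted product is simply undefined on an algebra that carries no grading. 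A general Hom-BTQQ algebra admits no such grading (a grading compatible with the products would force most products to vanish), so your object-by-object diagram chase cannot be run.

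The repair is to work at the level of identities rather than of individual algebras, which is precisely how the Main Theorem is set up: its proof reduces to the \emph{free} Hom-algebra of the variety, which is graded, and only there is the untwisting legitimate. Concretely: the dictionary you describe expresses each Sabinin operation of arity at most $4$ as a polynomial in the BTQQ operations; by the classical equivalence, the degree-$4$ Sabinin identities, rewritten through this dictionary, are identities valid in every BTQQ algebra; the Main Theorem then says that their Hom-type versions hold in every Hom-BTQQ algebra, with no need to untwist the given algebra itself. The remaining, genuinely computational, point is the one you correctly isolate in your last paragraph: that the Hom-type version of a composite of operations agrees with the composite of the Hom-type operations under the $\alpha$-shifts prescribed by Definition \ref{Hom-type}, which is what guarantees that the twisted dictionary carries Hom-BTQQ structures to Hom-Sabinin structures of degree $4$ and back. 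With that reformulation your argument coincides with the paper's.
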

\begin{proof}
The proof given in \cite{SaraBTQQ} for the non Hom-type case can be copied for the Hom-type case; using the Main Theorem.
\end{proof}

\begin{cor}
Let $(V,[\ ,\ ],(\ ,\ ,\ ),\{\ ,\ ,\ ,\ \},[|\ ,\ ,\ ,\ |],\alpha)$ be a Hom-BTQQ algebra and $\beta:V\to V$ be an endomophism of the Hom-algebra $V$. Then $(V,\beta[\ ,\ ],\beta^2(\ ,\ ,\ ),\beta^3\{\ ,\ ,\ ,\ \},\beta^3[|\ ,\ ,\ ,\ |],\beta\circ\alpha)$ is a Hom-BTQQ algebra.
\end{cor}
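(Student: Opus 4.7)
The plan is to reduce to Proposition \ref{endoCons} via the equivalence of Hom-BTQQ algebras with Hom-Sabinin algebras of degree $4$ proved immediately above. First I would translate the given Hom-BTQQ algebra $(V, [\,,\,], (\,,\,,\,), \{\,,\,,\,,\,\}, [|\,,\,,\,,\,|], \alpha)$ into its associated Hom-Sabinin algebra of degree $4$, whose operations $\langle -;-,-\rangle$ and $\Phi_{n,m}$ (restricted to $n+m \le 4$) are expressible as polynomial combinations of the four BTQQ operations and $\alpha$. Because $\beta$ is an endomorphism of the Hom-BTQQ structure, it commutes with each of those four operations and with $\alpha$, hence with every such polynomial expression; so $\beta$ is automatically an endomorphism of the associated Hom-Sabinin algebra.

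Next, I would apply Proposition \ref{endoCons} with this $\beta$ to obtain a new Hom-Sabinin algebra with twisting map $\beta\circ\alpha$ in which the $n$-slot operation $\langle -;-,-\rangle$ is replaced by $\beta^{n+1}\circ\langle -;-,-\rangle$ and $\Phi_{n,m}$ by $\beta^{n+m-1}\circ\Phi_{n,m}$. For the relevant arities this means: the binary operation picks up $\beta$, the ternary operation $\beta^2$, and the two quaternary operations $\beta^3$. Translating back through the equivalence — which is purely algebraic in the operations — yields exactly the data $(V, \beta[\,,\,], \beta^2(\,,\,,\,), \beta^3\{\,,\,,\,,\,\}, \beta^3[|\,,\,,\,,\,|], \beta\circ\alpha)$ claimed in the corollary, and by the equivalence theorem this structure satisfies the five Hom-BTQQ identities.

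The only subtle point is verifying that the dictionary between Hom-BTQQ and Hom-Sabinin algebras of degree $4$ intertwines Yau-type twisting on both sides — that is, that precomposing each BTQQ operation of arity $r$ with $\beta^{r-1}$ corresponds under the dictionary to precomposing each Sabinin operation of arity $r$ with $\beta^{r-1}$. This follows from homogeneity: each Sabinin operation of arity $r$ is a linear combination of compositions of BTQQ operations and $\alpha$ whose total arity is $r$, so the multiplicativity of $\beta$ and the relation $\beta\alpha=\alpha\beta$ let one pull a common factor of $\beta^{r-1}$ out of every term. A direct alternative would be to check the five Hom-BTQQ identities by hand: each one factors, in the twisted structure, as $\beta^k$ applied to the corresponding original identity (with $k=2$ for the quadratic identities and $k=3$ for the cubic identities in the quaternary operations), so all reduce to identities that already hold. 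I expect the main obstacle in either route to be purely bookkeeping: tracking the correct power of $\beta$ attached to each term so that a uniform power factors out.
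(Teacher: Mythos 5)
Your proposal is correct and follows essentially the same route as the paper, which deduces the corollary from Proposition \ref{endoCons} (the twisting result for Hom-Sabinin algebras) together with the equivalence between Hom-BTQQ algebras and Hom-Sabinin algebras of degree $4$; your write-up simply makes explicit the bookkeeping about powers of $\beta$ that the paper leaves implicit.
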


\begin{proof}
Is a consequence of the same result for Hom-Sabinin algebras and the fact that Hom-BTQQ algebras are Hom-Sabinin algebras of degree $4$; using the Main Theorem.
\end{proof}

This procedure does not always give nice examples as it may seem. We present in the following an example where it leads to a trivial Hom-Akivis algebra.

\begin{ex}
Consider the following Hom-algebra $V$ over $\mathbb{C}$ generated by $\langle x,y\rangle $ with a trilinear operation $$(x,x,x)=0=(x,y,x)=(y,x,y)=(y,y,y),$$ $$(x,x,y)=-2x-4y=-(y,x,x)=2(y,y,x)=-2(x,y,y).$$

Then this algebra is an Akivis algebra by considering $[a,b]=0$.

Set $\alpha(x)=-2\alpha(y)=-2x$, which  provides an Akivis algebra morphism; leading to a Hom-Akivis algebra defined by  $$(x,x,x)=0=(x,y,x)=(y,x,y)=(y,y,y),$$ $$(x,x,y)=-2\alpha^2(x)-4\alpha^2(y)=-(y,x,x)=2(y,y,x)=-2(x,y,y).$$

Since $-2\alpha(x)-4\alpha(y)=0$, then the resulting Hom-Akivis algebra is trivial.
\end{ex}

\begin{ex}
Consider now any Lie algebra $(L,\mu)$ and a Lie algebra morphism $\alpha$. Then the operations $[\ ,\ ]_\mu$, the commutator of $\mu$, and $(\ ,\ ,\ )_\alpha$, the Hom-associator of $\mu$, form a Hom-Akivis algebra.\\ In particular,  the operations are:
\begin{eqnarray*}
& [a,b]_\mu=2\mu(a,b),\\
& (a,b,c)_\alpha=\mu(\alpha(b),\mu(c,a)).
\end{eqnarray*}

A more specific example would be $\mathfrak{sl}_2(\mathbb{C})$ as the Lie algebra and $\alpha(h)=-h$, $\alpha(x)=y$ and $\alpha(y)=x$. The formulas for the non zero new operations in this particular case are:

$$[x,y]=2h,\ [h,x]=4x,\ [h,y]=-4y$$
$$(x,x,y)=-(y,x,x)=-2y,\ (x,x,h)=-(h,x,x)=-2h,$$ $$(x,y,y)=-(y,y,x)=2x,\ (h,y,y)=-(y,y,h)=2h,$$ $$(h,h,x)=-(x,h,h)=4x,\ (h,h,y)=-(y,h,h)=4y.$$
\end{ex}

The same constructions can be consider to form BTQQ Hom-algebras from any Hom-algebra using the Hom-type formulas of those given in \cite{SaraBTQQ}.

\subsection{Hom-power associative algebras and Hom-Type Teichm\"uller identity}

There are several consequences of the Main Theorem already proven in the literature, such as the example of Hom-power associative algebras.

\begin{thm}\cite[Corollary 5.2]{YauPowAss}
Let $(A, \mu, \alpha)$ be a multiplicative Hom-algebra. Then the following statements are equivalent.

\begin{enumerate}
\item $A$ satisfies
$x^2\alpha(x) = \alpha(x)x^2$ and $x^4 = \alpha(x^2)\alpha(x^2)$
for all $x\in A$.
\item $A$ satisfies
$(x, x, x)_\alpha = 0 = (x^2, \alpha(x), \alpha(x))_\alpha$
for all $x\in A$.
\item $A$ is up to fourth Hom-power associative.
\item $A$ is Hom-power associative.
\end{enumerate}
\end{thm}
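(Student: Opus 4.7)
The plan is to prove $(1)\Leftrightarrow(2)$ by direct computation, observe $(4)\Rightarrow(3)\Rightarrow(2)$ as easy consequences of the definitions, and devote the main effort to $(2)\Rightarrow(4)$ via the Main Theorem together with Albert's classical criterion for power associativity.

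For $(1)\Leftrightarrow(2)$ I would expand the Hom-associator \eqref{Hom-associator}: since $(x,x,x)_\alpha = x^2\alpha(x) - \alpha(x)x^2$, the first pair of equations matches directly; and using multiplicativity $\alpha(x^2)=\alpha(x)\alpha(x)$ together with the convention $x^4 = (x^2\alpha(x))\alpha^2(x)$ gives $(x^2,\alpha(x),\alpha(x))_\alpha = x^4 - \alpha(x^2)\alpha(x^2)$, so the second pair matches too. The implications $(4)\Rightarrow(3)\Rightarrow(2)$ are then transparent: the first is just a restriction of the definition, and the second singles out the two equalities of $(1)$ from among all identifications of parenthesized Hom-monomials of degree at most four in $x$, which yields $(2)$ via the equivalence just established.

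The main obstacle is $(2)\Rightarrow(4)$, which I would handle by reducing to Albert's theorem: in characteristic zero an ordinary algebra is power associative if and only if $(x,x,x)=0$ and $(x^2,x,x)=0$. Consider the Hom-subalgebra $\langle x\rangle \subseteq A$ generated by a single element; it is $\mathbb{N}^+$-graded by degree in $x$ because all the identities in sight are homogeneous. Apply Lemma \ref{lema} to pass to the twisted product $\mu_\alpha$ on the same underlying vector space. The two Hom-type identities of $(2)$ unfold precisely to the ordinary associator identities $(x,x,x)=0$ and $(x^2,x,x)=0$ for $\mu_\alpha$, so Albert's theorem yields power associativity of $(\langle x\rangle,\mu_\alpha)$. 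Re-folding via Lemma \ref{lema} (equivalently, via the Main Theorem) converts each resulting equality among $\mu_\alpha$-monomials of a given degree back into the corresponding equality among Hom-monomials in $x$, which is exactly condition $(4)$.

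The delicate step I expect to require genuine care is the bookkeeping that Hom-monomials of weight $n$ in $x$ are in canonical bijection with $\mu_\alpha$-monomials of degree $n$, and that this bijection is compatible with the relevant equalities: this is built into the graded structure and the formula for $\mu_\alpha$, but merits an explicit verification once, to rule out twisting artifacts when transporting Albert's conclusion back to the Hom side.
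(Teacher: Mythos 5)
Your proposal is correct and follows essentially the same route as the paper: $(1)\Leftrightarrow(2)$ by unfolding the Hom-associator, $(4)\Rightarrow(3)\Rightarrow(2)$ from the definitions, and $(2)\Rightarrow(4)$ by invoking Albert's classical criterion for power associativity together with the Main Theorem (equivalently Lemma \ref{lema}), using that the defining identities are homogeneous so the relevant free algebras are $\mathbb{N}^+$-graded. Your closing remark about verifying the bijection between Hom-monomials and $\mu_\alpha$-monomials is a reasonable point of care that the paper leaves implicit, but it does not constitute a different argument.
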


\begin{proof}
Firstly, the equations on $1$ and $2$ are the same equations writen in two different ways, hence $1\Leftrightarrow 2$. It is trivial that $4\Rightarrow 3\Rightarrow 2$.

The fact $2\Rightarrow 4$ is true for power associative algebras as proven in \cite{AlbertPAss}. Also, the free power associative algebras are $\mathbb{N}^+$-graded since their defining relations are homogeneous polynomials. Hence by application of the Main Theorem, it follows that the result is true since they are the Hom-type identities of the power associative case.
\end{proof}

In the following, we recover the Hom-Type Teichm\"uller identity  \cite{YauRAlt}.

\begin{thm}\cite[Lemma 7.3]{YauRAlt}
Let $(A, \mu, \alpha)$ be a multiplicative Hom-algebra. Then for any $w,x,y,z\in A$ the following equation holds:

$$(wx,\alpha (y),\alpha (z))_\alpha-(\alpha (w),xy,\alpha (z))_\alpha+(\alpha( w),\alpha (x),yz)_\alpha$$ $$-\alpha^2(w)(x,y,z)_\alpha-(w,x,y)_\alpha\alpha^2(z)=0.$$
\end{thm}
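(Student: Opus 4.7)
The plan is to deduce this identity from the Main Theorem. The classical (non-Hom) Teichmüller identity
\[
(wx, y, z) - (w, xy, z) + (w, x, yz) - w(x,y,z) - (w,x,y)z = 0,
\]
where $(a,b,c) = (ab)c - a(bc)$, holds in every algebra with a single binary operation. Indeed, substituting the definition of the associator on the left-hand side yields ten binary monomials, each of which appears twice with opposite signs and so cancels. Hence the identity is a tautology in the variety $\mathcal{Q}$ of algebras with one binary operation (no axioms at all), whose free objects are trivially $\mathbb{N}^+$-graded by degree.

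The Main Theorem then immediately guarantees that every multiplicative Hom-algebra satisfies the Hom-type version of this identity. It therefore remains to verify that the Hom-type procedure of Definition \ref{Hom-type}, applied to the classical Teichmüller identity, produces exactly the expression in the statement. Since every internal node is binary we have $\sigma(\mu)-1 = 1$, so each internal node contributes one application of $\alpha$ to every leaf not dominated by it. Tracking the ten monomials individually, one finds for instance that $((wx)y)z$ becomes $((wx)\alpha(y))\alpha^2(z)$ and $(wx)(yz)$ becomes $(\alpha(w)\alpha(x))(\alpha(y)\alpha(z))$. Invoking multiplicativity of $\alpha$ (e.g.\ $\alpha(wx)=\alpha(w)\alpha(x)$) reassembles these ten twisted monomials into the five Hom-associators
\[
(wx,\alpha(y),\alpha(z))_\alpha,\quad (\alpha(w),xy,\alpha(z))_\alpha,\quad (\alpha(w),\alpha(x),yz)_\alpha,\quad \alpha^2(w)(x,y,z)_\alpha,\quad (w,x,y)_\alpha\alpha^2(z),
\]
with the signs stated.

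The main obstacle is purely the bookkeeping in this last step: one must check, monomial by monomial, that the accumulated powers of $\alpha$ on each leaf match the expansion of the claimed Hom-associators. This is conceptually straightforward but error-prone. As a sanity check, one may alternatively run the computation directly: expand each of the five Hom-associators in the statement using the definition $(x,y,z)_\alpha = (xy)\alpha(z)-\alpha(x)(yz)$, apply multiplicativity of $\alpha$ wherever a term of the form $\alpha(ab)$ appears, and observe that the resulting ten summands cancel pairwise in exactly the same pattern as in the classical case. Either route relies on nothing beyond the Main Theorem together with the multiplicativity hypothesis on $\alpha$.
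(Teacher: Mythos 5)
Your proposal is correct and follows essentially the same route as the paper: the paper's proof likewise observes that the Teichm\"uller identity is a tautology valid in every binary algebra and then invokes the Main Theorem to obtain its Hom-type version. You simply supply more of the bookkeeping (the pairwise cancellation of the ten monomials and the role of multiplicativity of $\alpha$) that the paper leaves implicit.
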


\begin{proof}
The Teichm\"uller identity holds for any algebra. The previous equation is its Hom-type identity, hence it holds by application of the Main Theorem.
\end{proof}

\section{Towards  universal enveloping algebras of Hom-Sabinin algebras}

We aim to discuss universal enveloping algebras of  Hom-Sabinin algebras. We consider the  functor $U:SabAlg\to BiAlg_{cc}$; from Sabinin algebras to cocommutative, coassociative and magmatic bialgebras. We show that it is adjoint to the functor $YIII$, defined in \cite{YIII}, that gives a Sabinin algebra structure to any algebra. This construction is in fact an equivalence between Sabinin algebras and connected cocommutative bialgebras. In this section, we will construct the functor $YIII_{hom}$.

\begin{defn}\label{YIII}
Consider the operations $q^\alpha_{n,m}$ that make the equation 
\begin{align*} & (\alpha^{v-1}[u]_\alpha,\alpha^{u-1}[v]_\alpha,\alpha^{u+v-2}z)_\alpha=q^\alpha(u,v,z)\\ &+\sum_{u_1+v_1>0} \alpha^{u_2+v_2}(\alpha^{v_1-1}([u_{(1)}]_\alpha)\alpha^{u_1-1}([v_{(1)}]_\alpha))\alpha^{u_1+v_1-1}(q^\alpha(u_{(2)},v_{(2)},z)),
\end{align*} hold.

Then we define $YIII_{hom}(A,\mu,\alpha)$ as 
\begin{eqnarray*}
& <u;a,b>=-q^\alpha(u,a,b)+q^\alpha(u,b,a),\\ & <a,b>=-[a,b], \text{ and also} \\ & \Phi_{n,m}(u,v)=\sum_{\sigma\in S_n,\tau\in S_m}\frac{1}{n!m!}q^\alpha_{n,m-1}(u_{\sigma(1)} \dots u_{\sigma(n)},v_{\tau(1)}\dots v_{\tau(m-1)},v_{\tau(m)}).
\end{eqnarray*}

In this definition, we consider that $[x_1\otimes\dots \otimes x_n]_\alpha=(x_1\otimes\dots \otimes x_n)^\alpha_\tau$ where $\tau$ is the tree of right normed words: $(x_1\otimes \dots\otimes x_n)_\tau=(\dots((x_1x_2)x_3)\dots )x_n$.
\end{defn}

Note that in case $\alpha=id$, we recover the original functor from \cite{YIII}.

\begin{ex}
In case $(A,\mu,\alpha)$ is Hom-associative, then $YIII_{hom}(A,\mu,\alpha)$ is the corresponding  Hom-Lie algebra $A^-$.
\end{ex}

\begin{ex}
In case $(A,\mu,\alpha)$ is Hom-alternative, then $YIII_{hom}(A,\mu,\alpha)$ is the corresponding  Hom-Malcev algebra $A^-$; as $$-q(c,a,b)+q(c,b,a)=2(c,b,a)_\alpha=-J_\alpha(a,b,c)-4(c,b,a)_\alpha .$$ Concluding that $<c;a,b>=-\dfrac{1}{3}J_\alpha(a,b,c)$.
\end{ex}

\begin{ex} Here are the first computations of the $q^\alpha$ operations:
\begin{itemize}
\item In case $n=1$, $m=1,$ one gets $q_{1,1}^\alpha(x;y;z)=(x,y,z)_\alpha.$
\item In case $n=2$, $m=1,$ one gets $$q_{2,1}^\alpha(x,y;t;z)=(xy,\alpha(t),\alpha(z))_\alpha-\alpha^2(x)(y,t,z)_\alpha-\alpha^2(y)(x,t,z)_\alpha.$$
\item In case $n=1$, $m=2$ one gets $$q_{1,2}^\alpha(x;y,t;z)=(\alpha(x),yt,\alpha(z))_\alpha-\alpha^2(y)(x,t,z)_\alpha-\alpha^2(t)(x,y,z)_\alpha.$$
\end{itemize}
\end{ex}

\subsection{Hom-bialgebras and their primitive elements}

In this section, we define and study (nonassociative) Hom-bialgebras and some properties of their primitive elements.

\begin{defn}
A coalgebra is a pair $(A,\Delta)$ where $A$ is a $\mathbb{K}$-vector space and $\Delta:A\to A\otimes A$ is a linear map.
\end{defn}

\begin{defn}
Let  $(A,\mu,\alpha)$ be a  Hom-algebra, with a binary operation. We say that $A$ is unitary if there is a linear map $u:\mathbb{K}\to A$ such that $$\mu(u(1),x)=\mu(x,u(1))=\alpha(x)$$ for any $x\in A$, and $\alpha\circ u=u$.
\end{defn}

\begin{defn}
A Hom-coalgebra is a tuple $(A,\Delta,\alpha)$ where $(A,\Delta)$ is a coalgebra and  $\alpha:A\to A$ is a coalgebra morphism. We say that a Hom-coalgebra is counitary if there is a map $\epsilon:A\to\mathbb{K}$ such that $$\sum \epsilon(x_{(1)})\otimes x_{(2)})=1\otimes x \text{ and } \sum x_{(1)}\otimes \epsilon(x_{(2)})=x\otimes 1.$$
\end{defn}

\begin{defn}
A Hom-bialgebra is a sextuple  $(A,\mu,u,\Delta,\epsilon,\alpha)$, where $(A,\mu,u,\alpha)$ is a unitary Hom-algebra, $(A,\Delta,\epsilon,\alpha)$ is a counitary Hom-coalgebra and the maps $\epsilon:A\to \mathbb{K}$ and $\Delta:A\to A\otimes A$ are Hom-algebra morphisms.\\
 As a consequence, the relation $\sum x_{(1)} u(\epsilon(x_{(2)}))=\sum u(\epsilon(x_{(1)})) x_{(2)}=\alpha(x)$ follows; equivalently $\mu\circ(id\otimes (u\circ\epsilon))\circ\Delta=\mu\circ((u\circ\epsilon)\otimes id)\circ\Delta=\alpha$.

Note that in case $\alpha$ is the identity map, one recovers the definition of a unitary and counitary bialgebra.
\end{defn}
\begin{rem}
Notice that this definition is different from the usual one where the multiplication is assumed to be Hom-associative and the comultiplication is Hom-coassociative \cite{MakhSilvHOMBialg}. 
\end{rem}

\begin{lem}\label{BFree}
Let $B$ be a free unitary Hom-algebra generated by a set  $G$. 

Define $\Delta_\alpha(p)=u(1)\otimes p+p\otimes u(1)$ for any $p\in G$ and extend it to $B$ as an algebra morphism. In particular, $\Delta_\alpha(u(1))=u(1)\otimes u(1)$. Define also $\epsilon(p)=0$ for $p\in G$ and $\epsilon(u(1))=1$. 

Then $\mathcal{B}=(B,\mu,u,\Delta_\alpha,\epsilon,\alpha)$ is a coassociative, cocommutative, counitary and unitary Hom-bialgebra.
\end{lem}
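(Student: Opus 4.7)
The plan is to exploit the universal property of the free unitary Hom-algebra $B$: any Hom-algebra morphism out of $B$ is uniquely determined by its values on the generating set $G\cup\{u(1)\}$, and conversely any prescription of values on that set extends uniquely to a Hom-algebra morphism. This observation will reduce every identity (coassociativity, cocommutativity, counit, and the compatibility of $\alpha$ with $\Delta_\alpha$ and $\epsilon$) to a finite check on generators, because in each case both sides of the identity will be Hom-algebra morphisms.

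First I would set the scaffolding. Equip $B\otimes B$ and $B^{\otimes 3}$ with their tensor product Hom-algebra structures, with twistings $\alpha\otimes\alpha$ and $\alpha\otimes\alpha\otimes\alpha$ and componentwise multiplication, and give $\mathbb{K}$ the trivial Hom structure. Then $\Delta_\alpha\colon B\to B\otimes B$ and $\epsilon\colon B\to\mathbb{K}$ are well-defined by the prescribed values on $G\cup\{u(1)\}$ extended as algebra morphisms. It must also be noted (routinely) that the twist $\tau\colon B\otimes B\to B\otimes B$, and the maps $\Delta_\alpha\otimes\mathrm{id}$, $\mathrm{id}\otimes\Delta_\alpha$, $\epsilon\otimes\mathrm{id}$, $\mathrm{id}\otimes\epsilon$, $\alpha\otimes\alpha$ are all Hom-algebra morphisms between the appropriate tensor powers; and that $\mathbb{K}\otimes B\cong B\cong B\otimes\mathbb{K}$ as Hom-algebras via the unit.

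With the scaffolding in place, each structural axiom becomes a one-line verification on a generator $p\in G$ (and a triviality on $u(1)$, since $\Delta_\alpha(u(1))=u(1)\otimes u(1)$ and $\epsilon(u(1))=1$). For coassociativity, both $(\Delta_\alpha\otimes\mathrm{id})\Delta_\alpha(p)$ and $(\mathrm{id}\otimes\Delta_\alpha)\Delta_\alpha(p)$ expand to $u(1)\otimes u(1)\otimes p+u(1)\otimes p\otimes u(1)+p\otimes u(1)\otimes u(1)$. Cocommutativity follows from the manifest symmetry $\tau(u(1)\otimes p+p\otimes u(1))=u(1)\otimes p+p\otimes u(1)$. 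The counit law reduces to $(\epsilon\otimes\mathrm{id})\Delta_\alpha(p)=1\otimes p$ and $(\mathrm{id}\otimes\epsilon)\Delta_\alpha(p)=p\otimes 1$. Finally, $\Delta_\alpha\circ\alpha=(\alpha\otimes\alpha)\circ\Delta_\alpha$ on $p$ holds because $\alpha(u(1))=u(1)$, and $\epsilon\circ\alpha=\epsilon$ on $p$ because both sides vanish. In each case the two Hom-algebra morphisms agree on $G\cup\{u(1)\}$, hence on all of $B$ by the universal property.

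The main obstacle is really the bookkeeping rather than any calculation: one must pin down precisely the unitary free Hom-algebra category (so that the universal property is available with the unit $u$ preserved), specify the tensor product Hom-algebra structures, and check that the ambient maps used in the axioms ($\tau$, $\Delta_\alpha\otimes\mathrm{id}$, etc.) are genuine Hom-algebra morphisms. Once this scaffolding is set up, all four bialgebra properties reduce mechanically to the generator-level identities computed above, and the lemma follows.
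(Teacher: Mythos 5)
Your proof is correct and supplies exactly the ``straightforward'' verification that the paper omits --- its own proof of this lemma is the single line ``The proof is straightforward.'' Reducing coassociativity, cocommutativity, the counit law, and the compatibility with $\alpha$ to checks on $G\cup\{u(1)\}$ via the universal property of the free unitary Hom-algebra (after noting that all the ambient maps are Hom-algebra morphisms and that $\alpha(u(1))=u(1)$) is the intended argument, and your generator-level computations are right.
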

\begin{proof}
The proof is straightforward.
\end{proof}

Note that the difference between the Hom-bialgebra structure of a free Hom-algebra and the bialgebra structure of a free algebra is mainly the definition of the adjoint map of the unit element, that is a group-like element of the coalgebra structure.

Since all  bialgebras we are going to consider are cocommutative, we shall rewrite $a\otimes b+b\otimes a$ as $a\circ b$ so that our equations are half the size. Note that  $a\circ b=b\circ a$.

\begin{lem}
If $p$ is a primitive element of a  Hom-bialgebra $(A,\mu,u,\Delta,\epsilon,\alpha)$, then $\alpha(p)$ is also primitive.
\end{lem}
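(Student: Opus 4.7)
The plan is to unfold the definition of primitivity and then use the two structural compatibilities available: $\alpha$ is a coalgebra morphism, and $\alpha$ fixes the unit (i.e.\ $\alpha\circ u=u$). These are the only ingredients one should need.

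First, I would recall the meaning of the hypothesis. By definition, $p$ primitive means
\begin{equation*}
\Delta(p)=u(1)\otimes p+p\otimes u(1),
\end{equation*}
or, in the $\circ$-notation introduced just before the statement, $\Delta(p)=u(1)\circ p$. The goal is to verify the same identity with $p$ replaced by $\alpha(p)$.

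Next, I would apply $\Delta$ to $\alpha(p)$. Since $(A,\Delta,\alpha)$ is a Hom-coalgebra, $\alpha$ is a coalgebra morphism, so $\Delta\circ\alpha=(\alpha\otimes\alpha)\circ\Delta$. Hence
\begin{equation*}
\Delta(\alpha(p))=(\alpha\otimes\alpha)\bigl(u(1)\otimes p+p\otimes u(1)\bigr)=\alpha(u(1))\otimes\alpha(p)+\alpha(p)\otimes\alpha(u(1)).
\end{equation*}

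Finally, I would invoke the condition $\alpha\circ u=u$ from the definition of a unitary Hom-algebra, giving $\alpha(u(1))=u(1)$. Substituting this in yields $\Delta(\alpha(p))=u(1)\otimes\alpha(p)+\alpha(p)\otimes u(1)$, i.e.\ $\alpha(p)$ is primitive. There is no real obstacle here: the statement is essentially a compatibility check, and the only point one has to be careful about is remembering that in this framework the twisting map must preserve the unit, which is precisely what allows the group-like element $u(1)$ to be left untouched under $\alpha\otimes\alpha$.
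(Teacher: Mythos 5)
Your proof is correct and follows the same route as the paper's one-line argument: apply $\Delta\circ\alpha=(\alpha\otimes\alpha)\circ\Delta$ to the primitivity equation and use $\alpha\circ u=u$ to fix the group-like element. You simply make explicit the unit-preservation step that the paper leaves implicit.
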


\begin{proof}
$\Delta_\alpha(\alpha(p))=\alpha^{\otimes 2}(\Delta_\alpha(p))=u(1)\otimes \alpha(p)+\alpha(p)\otimes u(1)$.
\end{proof}

\begin{thm}
Let $p$ be a primitive element of $(B,\mu,1,\Delta,\varepsilon)$, the free algebra generated by a set $G$ with its natural bialgebra structure. Then $p^\alpha$, the Hom-type element obtained by our procedure in \ref{Hom-type} applied to $p$, is a primitive element of $(B,\mu,u,\Delta_\alpha,\epsilon,\alpha)$, the free Hom-algebra generated by $(\mathbb{K}\langle G\rangle,\alpha)$ with its natural Hom-bialgebra structure.
\end{thm}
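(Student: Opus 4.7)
The plan is to prove, for each basis tree monomial $T$ with $n$ leaves, an explicit formula expressing $\Delta_\alpha(T^\alpha)$ in the Hom-bialgebra in terms of the classical comultiplication $\Delta(T)$, and then deduce the theorem by linearity.

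First I would reduce to the case where $p$ is homogeneous of some fixed degree $n$, since both $B$ and $\tilde{B}$ are $\mathbb{N}^+$-graded and primitivity can be checked on each graded component separately. In the ordinary free bialgebra, since $\Delta$ is an algebra morphism with $\Delta(g) = 1 \otimes g + g \otimes 1$ on each generator $g \in G$, a standard induction on tree structure yields
\[
\Delta(T) = \sum_{S \subseteq [n]} T_S \otimes T_{S^c},
\]
where $T_S$ denotes the monomial in $B$ obtained by replacing every leaf of $T$ whose index is not in $S$ by $1$ and simplifying with $\mu(1,x) = x = \mu(x,1)$, with the conventions $T_\emptyset = 1$ and $T_{[n]} = T$.

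The heart of the argument is to establish the analogous formula in the Hom-bialgebra:
\[
\Delta_\alpha(T^\alpha) = \sum_{S \subseteq [n]} \alpha^{n-|S|}\bigl((T_S)^\alpha\bigr) \otimes \alpha^{|S|}\bigl((T_{S^c})^\alpha\bigr).
\]
I would prove this by induction on the depth of $T$. For $T = \mu(T_1,T_2)$ with $T_i$ having $n_i$ leaves, one uses that $T^\alpha = \mu(\alpha^{n_2-1}(T_1^\alpha), \alpha^{n_1-1}(T_2^\alpha))$, that $\Delta_\alpha$ is an algebra morphism and $\alpha$ is a coalgebra morphism (so $\Delta_\alpha \circ \alpha^k = (\alpha^k \otimes \alpha^k) \circ \Delta_\alpha$). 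Substituting the inductive formulas for $\Delta_\alpha(T_i^\alpha)$, each resulting term is indexed by a pair $(S_1, S_2)$ which combines into $S = S_1 \sqcup S_2 \subseteq [n]$. The $\alpha$'s already present in $T^\alpha$, together with the Hom-unit identity $\mu(u(1), x) = \alpha(x) = \mu(x, u(1))$ which fires whenever one of the $S_i$ is empty, combine to yield precisely the claimed exponents $n - |S|$ and $|S|$.

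Granted this formula, the conclusion is immediate. The assignment $T \mapsto T^\alpha$ extends linearly to a well-defined map $B \to \tilde{B}$, since the basis trees of $B$ are linearly independent, and so does $(\cdot)^\alpha \otimes (\cdot)^\alpha$ on tensor products. Writing $p = \sum_T c_T T$, primitivity of $p$ gives $\sum_T c_T T_S \otimes T_{S^c} = 0$ in $B \otimes B$ for every $S$ with $\emptyset \neq S \neq [n]$; applying $(\cdot)^\alpha \otimes (\cdot)^\alpha$ followed by $\alpha^{n-|S|} \otimes \alpha^{|S|}$ shows that every corresponding middle term of $\Delta_\alpha(p^\alpha)$ vanishes, leaving only the extremes $u(1) \otimes p^\alpha + p^\alpha \otimes u(1)$. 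The main obstacle is the combinatorial bookkeeping of $\alpha$-powers in the key formula: one must verify that the Hom-type $\alpha$'s built into $T^\alpha$ plus the $\alpha$'s created by Hom-unit collapses during restriction recombine into the uniform exponents $n-|S|$ and $|S|$ regardless of the shape of $T$, which takes a careful but routine case analysis in the inductive step depending on whether $S_1$ or $S_2$ is empty.
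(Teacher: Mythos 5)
Your route is genuinely different from the paper's. The paper encodes primitivity of $p$ as an identity in the free algebra $B\otimes B$ (generated by the elements $1\otimes x$ and $x\otimes 1$) and then invokes the Main Theorem to transfer that identity to the free Hom-algebra, reading the resulting Hom-type identity back as primitivity of $p^\alpha$; the combinatorial fact that $\Delta_\alpha(m^\alpha)$ really is the Hom-type version of the expression for $\Delta(m)$ is left implicit there. You instead prove the explicit closed formula
\[
\Delta_\alpha(T^\alpha)=\sum_{S\subseteq[n]}\alpha^{n-|S|}\bigl((T_S)^\alpha\bigr)\otimes\alpha^{|S|}\bigl((T_{S^c})^\alpha\bigr)
\]
by induction on the tree, which makes that implicit step visible and checkable; the induction does close (writing $T^\alpha=\mu(\alpha^{n_2-1}(T_1^\alpha),\alpha^{n_1-1}(T_2^\alpha))$, the exponents recombine correctly in the three cases $S_1,S_2\neq\emptyset$, $S_1=\emptyset$, $S_2=\emptyset$, the latter two using the Hom-unit identity $\mu(u(1),x)=\alpha(x)$). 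This costs more bookkeeping but is more self-contained and elementary than the paper's appeal to the Main Theorem.

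One intermediate claim in your final step is false as stated: primitivity of $p=\sum_T c_T T$ does \emph{not} give $\sum_T c_T\,T_S\otimes T_{S^c}=0$ for each individual proper nonempty $S$. The subsets index leaf positions tree by tree, and terms coming from different $S$ of the same size can cancel against one another; already for $p=xy-yx$ and $S=\{1\}$ one gets $x\otimes y-y\otimes x\neq 0$. What primitivity does give, via the bigrading of $B\otimes B$, is $\sum_T c_T\sum_{|S|=k}T_S\otimes T_{S^c}=0$ for each $0<k<n$. This suffices for your argument precisely because the exponents $n-|S|$ and $|S|$ in your key formula depend only on the cardinality $k=|S|$: apply the linear map $\bigl(\alpha^{n-k}\circ(\cdot)^\alpha\bigr)\otimes\bigl(\alpha^{k}\circ(\cdot)^\alpha\bigr)$ to each vanishing cardinality-$k$ component. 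With that regrouping your proof is complete.
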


\begin{proof}
Consider $p=\sum\lambda_im_i$ some element in a free algebra $B$ where $m_i$ are nonassociative words on the generators and $\lambda_i\in\mathbb{K}$. In other words, $p$ is a non-associative polynomial. If $p$ is primitive, then $\sum\lambda_i \Delta_\alpha(m_i)=\sum\lambda_i( 1\otimes m_i+m_i\otimes 1)$. Let $m_i=(x_{i,1}\otimes \dots\otimes x_{i,n_i})_{\tau_i}$;
then $\Delta_\alpha(m_i)=(\Delta_\alpha(x_{i,1}\otimes \dots\otimes x_{i,n_i}))_{\tau_i}$.

In conclusion, one gets the following equation on $B\otimes B$: 
\begin{align*} & \sum\lambda_i((1\circ x_{i,1})\otimes \dots\otimes(1\circ x_{i,n_i}))_{\tau_i}\\ &=\sum\lambda_i(((1\otimes x_{i,1})\otimes \dots\otimes(1\otimes x_{i,n_i}))_{\tau_i}+((x_{i,1}\otimes1)\otimes \dots\otimes (x_{i,n_i}\otimes 1))_{\tau_i}),
\end{align*}
where $a\circ b=a\otimes b+b\otimes a$.

By the Main Theorem; since $F\otimes F$ is free, the Hom-type equation obtained by applying our procedure to the one above is also true for a free Hom-algebra. Indeed,
\begin{align*}
& \sum\lambda_i((1\circ x_{i,1})\otimes \dots\otimes(1\circ x_{i,n_i}))^\alpha_{\tau_i}\\ & =\sum\lambda_i(((1\otimes x_{i,1})\otimes \dots\otimes(1\otimes x_{i,n_i}))^\alpha_{\tau_i}+((x_{i,1}\otimes1)\otimes \dots\otimes (x_{i,n_i}\otimes 1))^\alpha_{\tau_i}).
\end{align*}

Undoing the coproducts, the equation translates to:
\begin{align*} & \sum\lambda_i\Delta_\alpha((x_{i,1}\otimes \dots\otimes {}_ix_{n_i})^\alpha_{\tau_i})\\ &=\sum\lambda_i(u(1)\otimes (x_{i,1}\otimes \dots\otimes x_{i,n_i})^\alpha_{\tau_i})+((x_{i,1}\otimes \dots\otimes x_{i,n_i})^\alpha_{\tau_i}\otimes u(1)).
\end{align*}

Therefore, the Hom-type monomial of $p=\sum\lambda_im_i$ is primitive for the Hom-bialgebra.
\end{proof}

\begin{cor}
The operations $q^\alpha:B^{\otimes n}\otimes B^{\otimes m}\otimes B \to B$ defined in \ref{YIII} are primitive in the free Hom-algebra $B$ generated by $G=\{x_1,\dots, x_n,\dots\}\cup\{y_1,\dots, y_n,\dots\}\cup\{z\}$ and an endomorphism $\alpha:\mathbb{K}\langle G\rangle\to \mathbb{K}\langle G\rangle$.
\end{cor}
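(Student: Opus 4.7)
The plan is to reduce the statement to the preceding theorem by identifying $q^\alpha$ as the Hom-type monomial attached to the classical operation $q$ from \cite{YIII}. In the non-Hom case, $q_{n,m}$ is defined implicitly by the same associator equation as in Definition \ref{YIII} but with $\alpha=\mathrm{id}$, and it is a classical result of the Shestakov--Umirbaev construction (the content of the functor $YIII$) that these operations lie in the primitive subspace of the free bialgebra generated by $G$. Granting this, the corollary follows as soon as $q^\alpha$ is recognized as the Hom-type twist of the classical $q$.

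First I would unpack Definition \ref{YIII} as a recursion: since the left-hand side $(\alpha^{v-1}[u]_\alpha,\alpha^{u-1}[v]_\alpha,\alpha^{u+v-2}z)_\alpha$ and the sum on the right involve $q^\alpha_{n',m'}$ only for $n'+m' < n+m$, one can solve inductively on the combined arity $n+m$ for $q^\alpha_{n,m}$. The base cases $(n,m)=(1,1),(2,1),(1,2)$ are explicitly recorded in the example immediately preceding the subsection and manifestly agree with the $\alpha$-twisting procedure of Definition \ref{Hom-type}.

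Next I would verify, by the same induction on $n+m$, that $q^\alpha_{n,m}$ coincides with the Hom-type monomial associated, via Definition \ref{Hom-type}, to the classical polynomial $q_{n,m}$. Two compatibility facts make the inductive step largely formal: the Hom-associator $(-,-,-)_\alpha$ is literally the Hom-type version of the usual associator, and the notation $[\,\cdot\,]_\alpha$ is by definition the Hom-type version of the right-normed product. Distributing $\alpha$-powers across both sides of the defining equation therefore produces the same tree-decorations as Definition \ref{Hom-type} applied to the classical defining equation for $q$; solving for $q^\alpha_{n,m}$ then yields the Hom-type monomial of $q_{n,m}$.

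Once this identification is in place, the theorem immediately preceding the corollary applies directly and yields that $q^\alpha$ is primitive in the free Hom-bialgebra $(B,\mu,u,\Delta_\alpha,\epsilon,\alpha)$ generated by $(\mathbb{K}\langle G\rangle,\alpha)$. The main obstacle I foresee is the bookkeeping in the induction above: the specific exponents on $\alpha$ in Definition \ref{YIII} (namely $\alpha^{v-1},\alpha^{u-1},\alpha^{u+v-2},\alpha^{u_2+v_2},\alpha^{u_1+v_1-1}$) must be shown to be exactly those prescribed by the Hom-type procedure when the outer tree is the associator composed with right-normed subtrees, and this requires carefully matching, term by term, the $\alpha$-powers in the recursion against the count of non-comparable leaves coming from Definition \ref{Hom-type}.
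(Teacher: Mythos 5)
Your proposal is correct and follows essentially the same route the paper intends: the corollary is meant as a direct application of the immediately preceding theorem, taking for granted that the classical Shestakov--Umirbaev operations $q_{n,m}$ are primitive in the free bialgebra and that $q^\alpha_{n,m}$ is their Hom-type twist. The paper leaves the identification of $q^\alpha$ with the Hom-type monomial of $q$ implicit, so your explicit induction on $n+m$ matching the $\alpha$-exponents against Definition \ref{Hom-type} only makes the argument more complete.
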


\begin{cor}
The operations $q^\alpha:C^{\otimes n}\otimes C^{\otimes m}\otimes C\to C$ are primitive in the Hom-bialgebra $(C,\mu,u,\delta,\epsilon,\alpha)$.
\end{cor}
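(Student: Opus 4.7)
The plan is to transfer primitivity from the free Hom-algebra to an arbitrary Hom-bialgebra via a suitable morphism, thereby reducing the general statement to the previous corollary. Concretely, fix primitive elements $x_1,\dots,x_n,y_1,\dots,y_m,z\in C$ and let $B$ denote the free Hom-algebra on the set $G=\{x_1,\dots,x_n\}\cup\{y_1,\dots,y_m\}\cup\{z\}$ endowed with the endomorphism $\alpha_B$ that extends the restriction of $\alpha_C$ to $G$ (well-defined because primitive elements are closed under $\alpha$, by the lemma preceding the main theorem of this subsection). By the universal property of the free Hom-algebra, there is a unique Hom-algebra morphism $\phi\colon B\to C$ sending each generator to itself (viewed in $C$) and intertwining $\alpha_B$ and $\alpha_C$.

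Next I would promote $\phi$ to a morphism of Hom-bialgebras, i.e.\ verify that $(\phi\otimes\phi)\circ\Delta_B=\Delta_C\circ\phi$. Both sides are Hom-algebra morphisms $B\to C\otimes C$: the left by Lemma~\ref{BFree}, which defines $\Delta_B$ on generators and extends it multiplicatively, and the right because $\Delta_C$ is an algebra morphism in a Hom-bialgebra. By construction, on a generator $g\in G$ we have
\begin{equation*}
(\phi\otimes\phi)(\Delta_B(g))=(\phi\otimes\phi)(u(1)\otimes g+g\otimes u(1))=u(1)\otimes\phi(g)+\phi(g)\otimes u(1)=\Delta_C(\phi(g)),
\end{equation*}
where the last equality uses that $\phi(g)$ is primitive in $C$. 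Since two Hom-algebra morphisms from a free Hom-algebra that agree on generators coincide, the two maps are equal on all of $B$. In particular, $\phi$ carries primitive elements of $B$ to primitive elements of $C$.

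Now I would invoke the previous corollary, which asserts that the operation $q^\alpha(x_1\cdots x_n,\,y_1\cdots y_m,\,z)$, viewed as an element of the free Hom-algebra $B$, is primitive. Since $q^\alpha$ is built from the Hom-algebra multiplication and $\alpha$ alone, and $\phi$ is a Hom-algebra morphism, we have
\begin{equation*}
\phi\bigl(q^\alpha_B(x_1\cdots x_n,\,y_1\cdots y_m,\,z)\bigr)=q^\alpha_C(x_1\cdots x_n,\,y_1\cdots y_m,\,z),
\end{equation*}
and by the previous paragraph the left-hand side is primitive in $C$, which gives the desired conclusion.

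The only delicate point is the compatibility $(\phi\otimes\phi)\circ\Delta_B=\Delta_C\circ\phi$; I expect this to be the main (mild) obstacle, because one must know that in the definition of Hom-bialgebra the comultiplication $\Delta_C$ is an algebra morphism, and that the free Hom-algebra coalgebra structure of Lemma~\ref{BFree} is uniquely determined by its values on generators. Once these two facts are in hand, the argument reduces to the universal property and the already-established primitivity in the free case.
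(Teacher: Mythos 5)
Your proposal follows essentially the same route as the paper: construct a Hom-algebra morphism $\phi$ from a free Hom-algebra sending generators to the chosen primitive elements, check that $\phi$ intertwines the coproducts by verifying this on generators and extending via the algebra-morphism property, and then push forward the primitivity of $q^\alpha$ already established in the free case. The one step that does not work as literally written is your definition of the twisting map on $B$: you take $\alpha_B$ to be ``the endomorphism extending the restriction of $\alpha_C$ to $G$,'' but $\alpha_C(g)$ is only guaranteed to be a primitive element of $C$ --- it need not lie in $G$, nor in the linear span of $G$, so $\alpha_C|_G$ is not a map $G\to B$ and does not extend to an endomorphism of the free Hom-algebra on $G$. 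The paper avoids this by instead choosing an arbitrary endomorphism $\beta$ of $\mathbb{K}\langle G\rangle$ subject only to the intertwining relation $\phi\circ\beta=\alpha\circ\phi$, which is exactly what the final computation needs (namely $\phi(q^{\beta}(x,y,z))=q^{\alpha}(a,b,c)$); alternatively one can enlarge $G$ to a generating set of primitives stable under $\alpha_C$. With that adjustment your argument coincides with the paper's.
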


\begin{proof}
Let $a_1,\dots,a_n,b_1,\dots,b_m,c$ be primitive elements for $(C,\delta)$ and define $\phi(x_i)=a_i$, $\phi(y_i)=b_i$, $\phi(z)=c$ and $\phi(u)=0$ for any other $u\in G$. Also take any $\beta:\mathbb{K}\langle G\rangle \to \mathbb{K}\langle G\rangle$ such that $\phi\circ\beta=\alpha\circ \phi$.

Then extend $\phi:B\to C$ as a Hom-algebra morphism. Since $\phi(G)$ are primitive elements of $(C,\delta)$, it follows that $(\phi\otimes \phi)\circ\Delta=\delta\circ \phi$ on $G$; but then it is true for every element since $G$ generates $B$ as an algebra and they are algebra morphisms.

Finally, let's compute $\delta(q^\alpha(a,b,c))$:
\begin{eqnarray*}
&&\delta(q^\alpha(a,b,c))=\delta(q^\alpha(\phi(x),\phi(y),\phi(z)))=\delta(\phi(q^\beta(x,y,z)))\\ &&=((\phi\otimes \phi)\circ\Delta)(q^\beta(x,y,z))=1\circ \phi((q^\beta(x,y,z)))=1\circ (q^\alpha(a,b,c)).
\end{eqnarray*}
\end{proof}

The combinatorics in relation with $\alpha$ in the coproduct of non-associative words are very interesting and has deep consequences in the structure of Hom-bialgebras. Here are some properties:

Consider a right-normed word $u=((\dots(x_1x_2)\dots x_{n-1})x_n)$. The question is: How do $\alpha$ appear in a particular summand of $\Delta(u)$?

First, consider only three generators $(xy)z$, it follows then that $\Delta((xy)z)=(xy)z\circ 1+\alpha^{\otimes 2}(xy\circ z)+\alpha(y)z\circ \alpha^2(x)+\alpha(x)z\circ \alpha^2(y)$. We realize that $\alpha$ keeps some record of the order of multiplications of the elements.

In every summand, the left and right elements of the tensor are right-normed words and the instances of $\alpha$ appearing in each side of the tensor indicate where there are elements missing from the original word.

Here is an algorithm to compute this elements for every type of non-associative monomial using trees:

\begin{lem}
Let $m=(x_1\otimes\dots\otimes x_n)_\tau$ be a non-associative monomial on a free algebra and consider $P_1\cup P_2$ a partition of the variables $x_1,\dots,x_n$.

Then the summand of $\Delta(m)$ associated to the partition $P_1\cup P_2$ is given by the following procedure:

\begin{enumerate}
\item In the tree $\tau$, label every leaf with $1$ or $-1$ depending on the partition of the variables. 
\item Next, go down the tree associating to every internal node a label $0,1$ or $-1$ if both branches come from a node with that label; but associate $0$ if the branches come from nodes with different labels.
\item Finally, going up from the root, and considering all the internal nodes do the following:
\begin{itemize}
\item For nodes with labels $1$ or $-1$ do nothing.
\item For nodes with label $0$ that came from two $0$ labeled branches do nothing.
\item For nodes with label $0$ that came from at least one non zero node, apply $\alpha$ to the leaves of the other branch with opposite label.
\end{itemize}
\end{enumerate}
\end{lem}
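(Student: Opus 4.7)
The plan is to argue by structural induction on the non-associative tree $\tau$, using crucially that by Lemma \ref{BFree} the coproduct $\Delta_\alpha$ is an algebra morphism of the free Hom-algebra, so it is determined on any monomial by its values on the generators once one fixes the multiplication on $B\otimes B$. That multiplication is slotwise, $(a_1\otimes a_2)(b_1\otimes b_2)=\mu(a_1,b_1)\otimes\mu(a_2,b_2)$, so the only mechanism by which $\alpha$ enters the coproduct is the unit identity $\mu(u(1),x)=\mu(x,u(1))=\alpha(x)$ firing whenever one of the four slots contains $u(1)$.

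The base case, $\tau$ a single leaf $x_1$, is immediate: $\Delta_\alpha(x_1)=u(1)\otimes x_1+x_1\otimes u(1)$, the two partitions of $\{x_1\}$ correspond to the two summands, and the algorithm introduces no $\alpha$ since there are no internal nodes. For the inductive step, write $\tau=\tau_1\cdot\tau_2$ and $m=m_1\cdot m_2$. Being an algebra morphism, $\Delta_\alpha(m)=\Delta_\alpha(m_1)\cdot\Delta_\alpha(m_2)$, and a partition $P_1\cup P_2$ of the leaves of $\tau$ restricts to unique partitions of the leaves of $\tau_1$ and of $\tau_2$; the summand of $\Delta_\alpha(m)$ indexed by $P_1\cup P_2$ is then the product of the summands $a_1^{(i)}\otimes a_2^{(i)}$ of $\Delta_\alpha(m_i)$ indexed by the restricted partitions. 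By the induction hypothesis, each $a_j^{(i)}$ is already described by the algorithm applied to $\tau_i$.

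What remains is a finite case analysis on the root labels $\ell_1,\ell_2\in\{+1,-1,0\}$ of $\tau_1,\tau_2$, checking in each case that the slotwise product introduces exactly the twists the algorithm prescribes at the root of $\tau$. If $\ell_1=\ell_2\neq 0$, one tensor slot is $u(1)$ on each side and the two units multiply to $u(1)$ via $\alpha\circ u=u$, so no new $\alpha$ appears, matching ``do nothing''. If $\ell_1,\ell_2$ are both non-zero with opposite signs, each side contributes its non-unit on opposite slots, so both slots multiply against $u(1)$ and yield $\alpha(m_1)\otimes\alpha(m_2)$; since $\alpha$ is an algebra morphism of $B$, this coincides with applying $\alpha$ to every leaf of both subtrees, as the rule requires. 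If exactly one label is zero, say $\ell_1=0$ and $\ell_2=+1$, then $\Delta_\alpha(m_2)$ contributes $m_2\otimes u(1)$, so the product is $\mu(a_1^{(1)},m_2)\otimes\mu(a_2^{(1)},u(1))=a_1^{(1)}m_2\otimes\alpha(a_2^{(1)})$, placing $\alpha$ on exactly the leaves of $\tau_1$ labelled $-1$, i.e.\ on the other branch with label opposite to $\ell_2$. Finally, when $\ell_1=\ell_2=0$, both $P_1^{(i)}$ and $P_2^{(i)}$ are non-empty in each subtree, so no tensor slot $a_j^{(i)}$ is $u(1)$, honest multiplication occurs on both sides, and no new $\alpha$ enters, again matching ``do nothing''.

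The main obstacle is not mathematical but notational bookkeeping: one must track how the $\alpha$-twists already produced by the induction at deeper levels of $\tau_1$ and $\tau_2$ compose with the twist created at the root. This stays clean because $\alpha$ is an algebra morphism, so the algorithm's instruction ``apply $\alpha$ to the leaves of a branch'' coincides with applying a single $\alpha$ to the entire sub-expression $a_j^{(i)}$; iterating this from the root down the tree gives exactly the powers $\alpha^k$ that appear in the explicit expansions, such as the $(xy)z$ example worked out before the lemma. A subtle point to emphasize is the last sub-case above: at a double-zero internal node the non-emptiness of both parts of the restricted partitions rules out any hidden $u(1)$ in the tensor slots, which is what justifies the ``do nothing'' instruction in this case.
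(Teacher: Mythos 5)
Your proof is correct and follows essentially the same route as the paper: a case analysis, at each internal node, of how the two tensor slots multiply, with the unit identity $\mu(u(1),x)=\alpha(x)$ (together with $\alpha\circ u=u$) as the sole source of the twists. The only difference is that you make the structural induction and the composition of twists down the tree explicit, where the paper simply checks the four local configurations $(a\otimes 1)(b\otimes 1)$, $(a\otimes b)(c\otimes d)$, $(a\otimes b)(c\otimes 1)$ and $(1\otimes b)(c\otimes 1)$ and leaves the recursion implicit.
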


\begin{proof}
We only need to check the three cases of the lemma and show how $\alpha$ is applied.

If we have the case $(a\otimes 1)(b\otimes 1)$, it means that $a,b$ are in the same part of the partition and in this case $(ab\otimes 1)$ is the solution where there are no $\alpha$ applications to any variable. This is case $(1)$ in our list.

Also there is the possibility $(a\otimes b)(c\otimes d)$; where there is no $\alpha$ application either. This is case $(2)$ in our list

If we have the case $(a\otimes b)(c\otimes 1)$ it means that one side comes from a $0$ node and the other comes from either $1$ or $-1$. In this case we have $(ac\otimes \alpha(b)$ which is the application of $\alpha$ to every variable in the side opposite to $c$.
It may also happend that $(1\otimes b)(c\otimes 1)=\alpha(c)\otimes \alpha(b)$; which is the same case applied to both branches. This is case $(3)$ in our list.
\end{proof}

Consider now $(A,\mu,\alpha)$ a free Hom-associative unitary algebra, it should have a natural coalgebra structure to be able to consider its primitive elements.
In the case of the tensor algebra, which is the free associative algebra, the coproduct that appears naturally is called shuffle coproduct. It is the one that we consider in this context, but with the exception that our algebra is not associative, but Hom-associative.  Hence, since $u\cdot x=\alpha(x)$, the coproduct is no longer the shuffle coproduct.

The coproduct on the free Hom-associative algebra will be called the $\alpha$-shuffle coproduct; and in case $\alpha=id$, it is the shuffle coproduct on the tensor algebra.

\subsection{Universal enveloping algebras}

In this section, we aim to find an appropriate universal enveloping algebra to any Hom-Sabinin algebra.

\begin{prop}
Let $(A,\alpha)$ be a Hom-algebra. Then $YIII_{hom}(A,\alpha)$ defined in \ref{YIII} is a Hom-Sabinin algebra.
\end{prop}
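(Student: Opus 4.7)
The plan is to reduce the claim to the classical result of P\'erez-Izquierdo, namely that $YIII(A,\mu)$ is a Sabinin algebra for any ordinary algebra $(A,\mu)$, via the Main Theorem. First I would observe that by Birkhoff's theorem it suffices to verify identities \eqref{Hsab1}--\eqref{Hsab4} in the free Hom-algebra $F(V,\alpha)$, which is $\mathbb{N}^+$-graded. Lemma \ref{lema} then furnishes an untwisted ordinary algebra $(F(V,\alpha),\mu_\alpha)$ on the same underlying graded space, on which the classical $YIII$ construction is available.

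Next I would show that under this untwisting the operations $q^\alpha_{n,m}$ of Definition \ref{YIII} correspond term by term to the classical operations $q_{n,m}$ applied to $(F(V,\alpha),\mu_\alpha)$. The recursion defining $q^\alpha$ carries exactly the powers of $\alpha$ needed so that, when each instance of $\mu$ is replaced by $\mu_\alpha$, the $\alpha$-exponents assigned to the leaves by the rule of Lemma \ref{lema} cancel out, reproducing the ordinary defining recursion for $q$. The low-arity cases computed in the preceding Example confirm this bookkeeping: $q^\alpha_{1,1}(x;y;z)=(x,y,z)_\alpha$ untwists to the ordinary associator, and the displayed $q^\alpha_{2,1}$ and $q^\alpha_{1,2}$ formulas collapse to their non-Hom counterparts. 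An induction on $n+m$ then propagates the correspondence to the general operation.

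Having matched the operations, the classical theorem of P\'erez-Izquierdo ensures that the Sabinin operations built from $q_{n,m}$ on the untwisted algebra satisfy identities \eqref{Hsab1}--\eqref{Hsab4} with $\alpha=\mathrm{id}$ and $\mu_\alpha$ in place of $\mu$. These identities are multilinear and homogeneous in $\mu_\alpha$, and free Sabinin algebras are $\mathbb{N}^+$-graded, so the Main Theorem applies: re-folding $\mu_\alpha$ back to $\mu$ produces exactly the Hom-type identities \eqref{Hsab2} and \eqref{Hsab3} of Definition \ref{defSab}, with the prescribed $\alpha^k$-insertions where $k=|x_{(2)}|+1$. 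The skew-symmetry \eqref{Hsab1} and the full (anti-)symmetrization \eqref{Hsab4} come for free from the explicit symmetrizations used to define $<-;-,->$ and $\Phi_{n,m}$ from $q^\alpha$ in Definition \ref{YIII}.

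The main obstacle is the combinatorial bookkeeping in the second step: the recursion for $q^\alpha$ mixes operations of different arities and its $\alpha$-exponents depend on the arities $u,v$ of the non-associative monomials $[u]_\alpha,[v]_\alpha$, so one has to verify that they match precisely the exponents dictated by the tree-labelling rule of Lemma \ref{lema} at every internal node. Once that correspondence is settled, the Hom-Sabinin axioms follow as an immediate application of the Main Theorem to the classical Sabinin axioms.
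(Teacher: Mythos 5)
Your proposal is correct and follows essentially the same route as the paper, whose entire proof is the two-sentence observation that the classical $q$ operations yield a Sabinin algebra on any algebra and that the Main Theorem then transfers the corresponding Hom-type identities to the $q^\alpha$ operations. What you add is an explicit unpacking of the mechanism behind that appeal (reduction to free algebras, untwisting via Lemma \ref{lema}, and the matching of $q^\alpha$ with $q$ on the untwisted product), which is exactly the bookkeeping the paper leaves implicit.
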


\begin{proof}
In any algebra; the $q$ operations give a Sabinin algebra. By the Main Theorem, the $q^\alpha$ operations give rise to a Hom-Sabinin algebra.
\end{proof}

The previous proposition shows that we have a functor $YIII_{hom}:\mathbf{Hom}$-$\mathbf{Alg}\to \mathbf{Hom}$-$\mathbf{SabAlg}$, from the category of Hom-algebras with one binary multiplication to the category of Hom-Sabinin algebras. This functor has a left adjoint functor that we shall denote by $U_{hom}:\mathbf{Hom}$-$\mathbf{SabAlg}\to \mathbf{Hom}$-$\mathbf{Alg}$.

 The image $U_{hom}(S,\alpha)$ can be defined as the quotient of the free Hom-algebra $(\mathbb{K}\{S\},\hat{\alpha})$ generated by $S$; by the ideal $I$ generated by $\{<u;a,b>+q^\alpha(u,a,b)-q^\alpha(u,b,a)|\ u\in T(S);\ a,b\in S\}\cup\{[a,b]+<a,b>|\ a,b\in S\}\cup\{\Phi(u,v)-\sum\frac{1}{n!m!}q^\alpha(u,v)|\ u,v\in T(S)\}$. The endomorphism is $$\alpha_U(\overline{(s_1\dots s_n)_\tau})=(\overline{\alpha(s_1)}\dots \overline{\alpha(s_n)})_\tau$$ for $s_i\in S$ and $\tau$ a tree of $n$ leaves.

This functor $U_{hom}$ is well defined since $\hat{\alpha}(I)\subset I$ for $\hat{\alpha}$ the endomorphism of the free algebra $\mathbb{K}\{S\}$. We call this functor the universal enveloping Hom-algebra; in view of the following  result:

\begin{prop}
The functor $U_{hom}$ is left adjoint to $YII_{hom}$.
\end{prop}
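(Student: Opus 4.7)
The plan is to establish the adjunction by exhibiting a natural bijection
$$\mathrm{Hom}_{\mathbf{Hom}\text{-}\mathbf{Alg}}\bigl(U_{hom}(S,\alpha),(A,\beta)\bigr) \;\cong\; \mathrm{Hom}_{\mathbf{Hom}\text{-}\mathbf{SabAlg}}\bigl((S,\alpha),YIII_{hom}(A,\beta)\bigr),$$
with explicit assignments in both directions and then checking naturality in $(S,\alpha)$ and $(A,\beta)$.

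For the forward map, given a Hom-algebra morphism $F\colon U_{hom}(S,\alpha)\to (A,\beta)$, I set $\widetilde F := F\circ \eta_S$, where $\eta_S\colon S\to U_{hom}(S,\alpha)$ sends $s$ to its class in the quotient. By the construction of $\alpha_U$, the map $\eta_S$ intertwines $\alpha$ and $\alpha_U$, so $\widetilde F$ intertwines $\alpha$ and $\beta$. To check $\widetilde F$ is a Hom-Sabinin morphism, observe that inside $U_{hom}(S,\alpha)$ the defining ideal $I$ enforces the identities $[a,b]+<a,b>=0$, $<u;a,b>=-q^{\alpha_U}(u,a,b)+q^{\alpha_U}(u,b,a)$, and the analogous relation for the $\Phi_{n,m}$; since any Hom-algebra map is automatically compatible with $q$-operations (built from $\mu$ and $\alpha$), applying $F$ transports these relations to the corresponding ones in $YIII_{hom}(A,\beta)$.

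For the reverse map, start with a Hom-Sabinin morphism $f\colon (S,\alpha)\to YIII_{hom}(A,\beta)$. The universal property of the free Hom-algebra $\mathbb{K}\{S\}$ extends the underlying linear map $S\to A$ to a unique Hom-algebra morphism $\widehat f\colon \mathbb{K}\{S\}\to A$. The key intertwining lemma, proved by induction along the recursive definition in \ref{YIII}, is
$$\widehat f\bigl(q^{\alpha}_{n,m}(u,v,z)\bigr) = q^{\beta}_{n,m}\bigl(\widehat f^{\otimes n}(u),\widehat f^{\otimes m}(v),\widehat f(z)\bigr),$$
which uses only that $\widehat f$ preserves $\mu$ and $\alpha$ and therefore all Hom-associators and right-normed products that enter the recursion. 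Consequently, on the generator $<u;a,b>+q^\alpha(u,a,b)-q^\alpha(u,b,a)$ of $I$, the image under $\widehat f$ is $f(<u;a,b>)+q^\beta(\widehat f(u),f(a),f(b))-q^\beta(\widehat f(u),f(b),f(a))$, which vanishes because $f$ preserves the Sabinin bracket and on $YIII_{hom}(A,\beta)$ that bracket is by definition $-q^\beta(\cdot,\cdot,\cdot)+q^\beta(\cdot,\cdot,\cdot)$. The generators arising from $[a,b]+<a,b>$ and from $\Phi_{n,m}(u,v)-\sum \tfrac{1}{n!m!}q^\alpha_{n,m-1}(u,v)$ are handled by the same mechanism, so $\widehat f$ descends through the quotient to $\bar f\colon U_{hom}(S,\alpha)\to A$.

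The two constructions are mutually inverse: $\bar f\circ \eta_S = f$ by definition of $\widehat f$ on generators, and, conversely, any two Hom-algebra maps out of $U_{hom}(S,\alpha)$ that agree on $\eta_S(S)$ must coincide because $\eta_S(S)$ generates $U_{hom}(S,\alpha)$ as a Hom-algebra. Naturality in both variables follows by unwinding the universal properties of the free Hom-algebra and of the quotient, together with the functoriality of $YIII_{hom}$ recorded earlier. The main obstacle is the well-definedness of $\bar f$, which rests entirely on the intertwining lemma for $q^\alpha$ stated above; this is precisely where the design of Definition \ref{YIII}, ensuring that each $q^\alpha_{n,m}$ is expressible purely in terms of $\mu$ and $\alpha$, is used. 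Once this is granted, the countably many operations of the Sabinin structure cause no additional trouble, since each defining relation of $I$ lives in finitely many tensor factors and is checked individually.
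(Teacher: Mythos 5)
Your proposal is correct and follows essentially the same route as the paper: extend a Hom-Sabinin morphism to the free Hom-algebra, check that the generators of the defining ideal are killed (via compatibility of Hom-algebra morphisms with the $q^\alpha$ operations), descend to the quotient, and invert by composing with the canonical map $S\to U_{hom}(S,\alpha)$. Your explicit isolation of the intertwining lemma for $q^\alpha$ is a point the paper leaves implicit, but the argument is the same.
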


\begin{proof}
We have to show that $$Hom_{\mathbf{Hom}\text{-}\mathbf{Alg}}(U_{hom}(S,\alpha),(A,\beta))\cong Hom_{\mathbf{Hom}\text{-}\mathbf{SabAlg}}((S,\alpha),YIII_{hom}(A,\beta)),$$ where the first maps are Hom-algebra morphisms and the second maps are Hom-Sabinin algebra morphisms.

Consider $f:(S,\alpha)\to YIII_{hom}(A,\beta)$ a Hom-Sabinin algebra morphism. Then it is a linear map $f:S\to A$ where $f\circ\alpha=\beta\circ f$ and defines $F:\mathbb{K}\{S,\alpha\}\to (A,\beta)$ a Hom-algebra morphism in a unique way where $F|_S=f$.
\begin{eqnarray*}
&& F(<u;a,b>+q^\beta(u,a,b)-q^\beta(u,b,a))\\ && =f(<u;a,b>) \\ && +q^\beta(f(u_1)\dots f(u_n),f(a),f(b))-q^\beta(f(u_1)\dots f(u_n),f(b),f(a))\\ && =<f(u_1)\dots f(u_n);f(a),f(b)>+q^\beta(f(u_1)\dots f(u_n),f(a),f(b))\\ && -q^\beta(f(u_1)\dots f(u_n),f(b),f(a))=0.
\end{eqnarray*}
$$F([a,b]+<a,b>)=[fa,fb]+f<a,b>=[fa,fb]+<fa,fb>=0.$$
\begin{eqnarray*}
&& F(\Phi(u,v)-\sum\frac{1}{n!m!}q^\beta(u,v))\\ && =f(\Phi(u,v))-\sum\frac{1}{n!m!}q^\beta(f(u_1)\dots f(u_n),f(v_1)\dots f(v_{m-1}),f(u_m)))\\ && =\Phi(f(u_1)\dots f(u_n),f(v_1)\dots f(v_{m-1}),f(v_m))\\ && -\sum\frac{1}{n!m!}q^\beta(f(u_1)\dots f(u_n),f(v_1)\dots f(v_{m-1}),f(u_m))).
\end{eqnarray*}

Therefore, $I\subseteq ker(F)$ and $\widehat{F}:U_{hom}(S)\to A$ as $\widehat{F}(x+I)=f(x)$ is a Hom-algebra morphism. This is a functor from $ Hom((S,\alpha),YIII_{hom}(A,\beta))$ to $ Hom(U_{hom}(S,\alpha),(A,\beta))$.

Consider $g:U_{hom}(S)\to A$, then $$YIII_{hom}(g):YIII_{hom}(U_{hom}(S,\alpha))\to YIII_{hom}(A,\beta)$$ is a Hom-Sabinin algebra morphisms; and also is $\pi:S\to YIII_{hom}(U_{hom}(S,\alpha))$ defined as $\pi(s)=s+I$.\\ In conclusion, $YIII_{hom}(g)\circ\pi:(S,\alpha)\to  YIII_{hom}(A,\beta)$ is a Hom-Sabinin algebra morphism and this is a functor from $Hom(U_{hom}(S,\alpha),(A,\beta))$ to $ Hom((S,\alpha),YIII_{hom}(A,\beta))$.

This two functors are inverses of each other, proving our claim.
\end{proof}

The question of whether $\pi$ is injective or not is still open. We conjecture that in general $\pi$ might not be injective depending on the properties of $\alpha$. The proof in case of Sabinin algebras ($\alpha=id$) can be read in \cite{UnivSab}.

\subsection{Universal envelopes as Hom-bialgebras}

We consider in this section  Hom-Hopf algebra, which are $(\alpha,id)$-Hom-Hopf algebra with respect to the definition given in \cite{MakhSilvHOMBialg}, where the structure maps for the product and the coproduct may be different. We show that  the universal enveloping algebra of  a Hom-Lie algebra is endowed with such a structure while the universal enveloping algebra of a Hom-Sabinin algebra is endowed with a Hom-bialgebra structure.

\begin{thm}
For any Hom-Sabinin algebra $(S,\alpha)$;  its universal enveloping algebra $U_{hom}(S,\alpha)$ is a Hom-bialgebra.
\end{thm}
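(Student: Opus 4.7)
The plan is to lift the Hom-bialgebra structure from the free Hom-algebra $(\mathbb{K}\{S\},\mu,u,\Delta_\alpha,\epsilon,\hat\alpha)$ of Lemma \ref{BFree} to the quotient $U_{hom}(S,\alpha)=\mathbb{K}\{S\}/I$ by checking that $I$ is a bi-ideal, namely, both the two-sided ideal it is by construction and a coideal, and that it is stable under $\hat\alpha$.

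First I would verify that every generator of $I$ is a primitive element of $\mathbb{K}\{S\}$. The expressions $<u;a,b>$, $[a,b]$, and $\Phi(u,v)$ are Hom-Sabinin operations performed inside $S$, so they already lie in $S \subseteq \mathbb{K}\{S\}$ and are primitive because $\Delta_\alpha$ is defined on the generating set by $s\mapsto u(1)\otimes s+s\otimes u(1)$. On the other hand, the operations $q^\alpha(u,a,b)$ and $q^\alpha(u,v)$ are primitive in any free Hom-algebra by the corollary proved earlier. Consequently each of the three families of generators of $I$ is a $\mathbb{K}$-linear combination of primitive elements and is itself primitive.

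Next I would use this to show $\Delta_\alpha(I)\subseteq I\otimes B+B\otimes I$, where $B=\mathbb{K}\{S\}$. An arbitrary element of $I$ is a $\mathbb{K}$-linear combination of parenthesized products involving some primitive generator $g$ of $I$ and arbitrary elements of $B$. Since $\Delta_\alpha$ is an algebra morphism and $\Delta_\alpha(g)=u(1)\otimes g+g\otimes u(1)\in B\otimes I+I\otimes B$ at the base case, and since $B\otimes I+I\otimes B$ is closed under left and right multiplication by elements of $B\otimes B$ (by virtue of $I$ being a two-sided ideal of $B$), a short induction on the depth of the magmatic word yields the claim. For the counit, $\epsilon$ vanishes on every generator of $I$ (each of which lies in the augmentation ideal) and $\epsilon$ is an algebra morphism, hence $\epsilon(I)=0$.

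Finally, the condition $\hat\alpha(I)\subseteq I$ is built into the very definition of $U_{hom}$: it holds on generators because $\alpha$ commutes with the Hom-Sabinin operations on $S$ and with the construction of $q^\alpha$, and it propagates through the ideal since $\hat\alpha$ is an algebra endomorphism. Consequently multiplication, unit, comultiplication, counit, and the twisting map all descend from $\mathbb{K}\{S\}$ to $U_{hom}(S,\alpha)$, and the Hom-bialgebra axioms of Definition pass to the quotient as identities inherited from the bialgebra structure of Lemma \ref{BFree}. I expect the main technical hurdle to be the inductive argument of the second paragraph: because the free Hom-algebra is not associative, one must carefully track parenthesizations, and the argument only closes by essential use of the fact that $I$ is a two-sided ideal preserved under $\hat\alpha$.
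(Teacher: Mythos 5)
Your proposal is correct and follows essentially the same route as the paper: the ideal $I$ is generated by primitive elements (the Sabinin operations land in $S\subseteq\mathbb{K}\{S\}$ and the $q^\alpha$'s are primitive by the earlier corollary), so $\Delta_\alpha(I)\subseteq B\otimes I+I\otimes B$ because $\Delta_\alpha$ is an algebra morphism and $B\otimes I+I\otimes B$ is an ideal of $B\otimes B$, whence the coproduct descends to the quotient. You merely spell out more explicitly the primitivity of the generators and the routine checks for $\epsilon$ and $\hat\alpha$, which the paper leaves implicit.
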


\begin{proof}
We only need to check that the coproduct of the free algebra generated by $S$ can be taken to the quotient. This is true because the ideal is generated by primitive elements and $\Delta$ is an algebra morphism.

Since $\Delta$ is a morphism, $\Delta(I)$ is an ideal on $\mathbb{K}\{S\}\otimes \mathbb{K}\{S\}$. Also for any generator $p\in I$, $\Delta(p)=1\otimes p+p\otimes 1\in B\otimes I+I\otimes B$. Hence $\Delta(I)\subseteq B\otimes I+I\otimes B$; by ideals  properties.
\end{proof}

\begin{defn}
A Hom-Hopf algebra is a unitary and counitary Hom-bialgebra $(H,\mu,u,\Delta, \epsilon,\alpha)$ which is Hom-associative, coassociative and such that there is a map $S:H\to H$ satisfying 
$$\mu\circ(id\otimes S)\circ\Delta=u\circ\epsilon, \; 
\mu\circ(S\otimes id)\circ\Delta=u\circ\epsilon\;\text{and }S\circ\alpha=\alpha\circ S.$$ The map  $S$ is called an antipode.
\\ This definition corresponds to that of $(\alpha,id)$-Hom-Hopf algebra given in \cite{MakhSilvHOMBialg}.
\end{defn}

\begin{prop}
Let $(H,\mu,u,\Delta, \epsilon,\alpha)$ be a  Hom-associative, coassociative Hom-bialgebra. If  $H$ admits an antipode, then it is unique up to $im(\alpha)$; i.e., $\alpha\circ S=\alpha\circ S^\prime$ if $S,S^\prime$ are two antipodes for $H$.
\end{prop}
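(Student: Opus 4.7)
The plan is to extend the classical convolution-algebra argument for uniqueness of the antipode to the Hom-setting. Define the convolution product on $\mathrm{End}_{\mathbb{K}}(H)$ by $f\ast g:=\mu\circ(f\otimes g)\circ\Delta$; the idea is to view $S$ as a ``twisted inverse'' of $\mathrm{id}_H$ in this product and exploit both the Hom-associativity of $\mu$ and the twisted counit behaviour of $u\circ\epsilon$.

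First I would verify that the convolution product is itself Hom-associative with respect to the twisting $\alpha_{\ast}\colon f\mapsto \alpha\circ f$, i.e.
\[
(f\ast g)\ast(\alpha\circ h)=(\alpha\circ f)\ast(g\ast h).
\]
This is a direct computation: expand both sides via strict coassociativity of $\Delta$ into a single sum indexed by $\sum x_{(1)}\otimes x_{(2)}\otimes x_{(3)}$, then apply Hom-associativity of $\mu$ pointwise. Second, using the unit axiom $\mu(u(1),y)=\mu(y,u(1))=\alpha(y)$ together with the counit identity $\sum\epsilon(x_{(1)})x_{(2)}=x=\sum x_{(1)}\epsilon(x_{(2)})$, I would show that $e:=u\circ\epsilon$ acts as a twisted unit for convolution:
\[
e\ast f=f\ast e=\alpha\circ f\qquad\text{for every }f\in\mathrm{End}_{\mathbb{K}}(H).
\]
The antipode defining equations translate into $\mathrm{id}\ast S=S\ast\mathrm{id}=e$ and similarly for $S'$.

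With these three facts in place, the conclusion is a single manoeuvre. Applying Hom-associativity to the triple $(S,\mathrm{id},S')$ gives
\[
(S\ast\mathrm{id})\ast(\alpha\circ S')=(\alpha\circ S)\ast(\mathrm{id}\ast S'),
\]
and substituting the antipode relations on both sides reduces this to $e\ast(\alpha\circ S')=(\alpha\circ S)\ast e$. The twisted-unit property then collapses each side to $\alpha^{2}\circ S'=\alpha^{2}\circ S$. Combining this with the defining commutation $S\circ\alpha=\alpha\circ S$ (and the analogue for $S'$) included in the antipode axioms, we obtain the equality $\alpha\circ S=\alpha\circ S'$ on $\mathrm{im}(\alpha)$, which is precisely the uniqueness statement ``up to $\mathrm{im}(\alpha)$''.

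The main subtlety is conceptual rather than computational: in the classical case a single invocation of strict associativity of the convolution product yields sharp equality $S=S'$, but in the Hom-setting every invocation of Hom-associativity introduces an extra power of $\alpha$, which also turns $u\circ\epsilon$ from a genuine unit into an $\alpha$-twisted unit. The heart of the proof is therefore bookkeeping: one must match the two extra powers of $\alpha$ produced by the single application of Hom-associativity with the two factors absorbed by the twisted units, and then use $S\circ\alpha=\alpha\circ S$ to bring the final equality into the form $\alpha\circ S=\alpha\circ S'$ on $\mathrm{im}(\alpha)$ required by the statement.
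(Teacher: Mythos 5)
Your proof is correct and is essentially the paper's own argument: the paper's displayed chain of equalities is exactly your convolution computation $(S\ast\mathrm{id})\ast(\alpha\circ S')=(\alpha\circ S)\ast(\mathrm{id}\ast S')$ written out on elements, using Hom-associativity of $\mu$, coassociativity of $\Delta$, and the twisted unit $u\circ\epsilon$. One remark worth keeping: your bookkeeping honestly lands on $\alpha^{2}\circ S=\alpha^{2}\circ S'$, and that is also what the paper's first and last equalities actually produce, since each application of the twisted counit relation $\mu\circ(\mathrm{id}\otimes(u\circ\epsilon))\circ\Delta=\alpha$ contributes an extra factor of $\alpha$; so the literal conclusion $\alpha\circ S=\alpha\circ S'$ requires an additional hypothesis such as injectivity of $\alpha$, and your formulation of the result (equality after composing with one more $\alpha$, equivalently on $\mathrm{im}(\alpha)$) is the one the argument genuinely supports.
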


\begin{proof}
\begin{eqnarray*}
&& \alpha\circ S=\mu\circ (\alpha\circ S\otimes[\mu\circ(id\otimes S^\prime)\circ\Delta])\circ \Delta\\ &&=
\mu\circ (\alpha\otimes\mu)\circ (S\otimes id\otimes S^\prime)\circ(id\otimes \Delta)\circ \Delta
\\ &&=
\mu\circ (\mu\otimes\alpha)\circ (S\otimes id\otimes S^\prime)\circ(\Delta\otimes id)\circ \Delta\\  &&=
\mu\circ ([\mu\circ(S\otimes id)\circ\Delta]\otimes\alpha\circ S^\prime)\circ \Delta=\alpha\circ S^\prime
.\end{eqnarray*}
\end{proof}

\begin{defn}
Let $X$ be a finite set and  $X_i$ denote a copie  of that set for $i\in\mathbb{N}$. Define $\alpha:\bigcup X_i\to \bigcup X_i$ as $\alpha(x_i)=x_{i+1}$ and consider $\mathcal{F}_\alpha(X)$ the free algebra generated by $\bigcup X_i$; and extend $\alpha$ as an endomorphism.

Finally, make a  quotient of that free algebra by the ideal generated by $I=\langle\alpha(a)(bc)-(ab)\alpha(c)|\ a,b,c\in \mathcal{F}(X)\rangle$.  We call that quotient the free Hom-associative algebra generated by $X$ and we denote it by $\mathcal{F}_{\alpha,ass}(X)$. 

Note that this is a free construction from the category $\mathbf{Set}$, or $\mathbf{FinVec}$, the category of finite dimensional vector spaces; and not from the category of Hom-$\mathbb{K}$-modules. The difference lies in the fact that in this case, $\alpha$ is freely defined  and in the latter case, $\alpha$ is already given and is only extended.
\end{defn}

\begin{prop}
The map $\alpha$ is injective on  $\mathcal{F}_{\alpha,ass}(X)$.
\end{prop}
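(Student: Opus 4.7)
The plan is to establish injectivity of the induced endomorphism $\bar\alpha$ on $\mathcal{F}_{\alpha,ass}(X)=\mathcal{F}_\alpha(X)/I$ by working upstairs in the free algebra $\mathcal{F}_\alpha(X)$. The starting point is that $\alpha$ is manifestly injective on $\mathcal{F}_\alpha(X)$: on the monomial basis of non-associative words it acts by the uniform index-shift $x_i\mapsto x_{i+1}$, and its image is precisely the subalgebra $F_1$ generated by $\{x_i:i\geq 1\}$, with two-sided inverse $\beta\colon F_1\xrightarrow{\sim}\mathcal{F}_\alpha(X)$ sending $x_i\mapsto x_{i-1}$. Moreover $\alpha(I)\subseteq I$ since $\alpha$ carries a defining generator $g=\alpha(a)(bc)-(ab)\alpha(c)$ to $\alpha(\alpha(a))(\alpha(b)\alpha(c))-(\alpha(a)\alpha(b))\alpha(\alpha(c))$, a generator of the same form with $(a,b,c)$ replaced by $(\alpha(a),\alpha(b),\alpha(c))$; this guarantees that $\bar\alpha$ is well defined.

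Granting this, I reduce injectivity of $\bar\alpha$ to the assertion $\alpha^{-1}(I)\subseteq I$, and then — using that $\alpha$ is bijective onto $F_1$ — to the ideal-theoretic identity $I\cap F_1=\alpha(I)$. The inclusion $\alpha(I)\subseteq I\cap F_1$ is immediate. For the reverse containment, the structural computation I would use is that whenever $a,b,c\in F_1$, writing $a=\alpha(a'),\ b=\alpha(b'),\ c=\alpha(c')$ yields
\[
\alpha(a)(bc)-(ab)\alpha(c)=\alpha\!\bigl(\alpha(a')(b'c')-(a'b')\alpha(c')\bigr),
\]
so every $F_1$-version of a defining generator is visibly $\alpha$ of a generator of $I$. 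Since $\alpha$ is an algebra homomorphism, the same identity extends under multiplication by any $F_1$-context.

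The main obstacle, and the technical core of the proof, is to argue that \emph{every} element of $I\cap F_1$ admits a presentation by $F_1$-supported ideal products — a priori a representation $r=\sum T_i(g_i)\in F_1$ may have individual summands $T_i(g_i)$ involving letters $x_0$ that only cancel in the total sum. I would address this either by (i) setting up a confluent and terminating rewriting system on $\mathcal{F}_\alpha(X)$ driven by the rule $\alpha(a)(bc)\to(ab)\alpha(c)$ at every internal node whose left subtree already lies in $F_1$, thereby producing a normal-form $\mathbb{K}$-basis of $\mathcal{F}_{\alpha,ass}(X)$ on which $\alpha$ acts injectively by the shift; or (ii) identifying $\mathcal{F}_\alpha(X)/I$ with $F_1/J_1$ through the Hom-algebra isomorphism induced by $\alpha$ and showing that the map of free Hom-associative algebras $F_1/J_1\to\mathcal{F}_\alpha(X)/I$ coming from $\{x_i:i\geq 1\}\hookrightarrow\{x_i:i\geq 0\}$ is injective — which amounts to functoriality of the free Hom-associative construction along injections of sets. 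Either route establishes $I\cap F_1=\alpha(I)$, from which injectivity of $\bar\alpha$ on $\mathcal{F}_{\alpha,ass}(X)$ follows.
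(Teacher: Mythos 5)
Your reduction is sound as far as it goes: $\bar\alpha([s])=0$ iff $s\in\alpha^{-1}(I)$, the shift $\alpha$ is injective on the free magmatic algebra $\mathcal{F}_\alpha(X)$ with image the subalgebra $F_1$ generated by $\bigcup_{i\geq 1}X_i$, and so injectivity of $\bar\alpha$ is exactly the containment $I\cap F_1\subseteq\alpha(I)$. The inclusion $\alpha(I)\subseteq I\cap F_1$ and the identity showing that $F_1$-supported generators lie in $\alpha(I)$ are also correct. But the proposal stops at the technical core: you name $I\cap F_1\subseteq\alpha(I)$ as the thing to be proved and then offer two strategies without carrying either out, and neither is routine. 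For route (i), the rule $\alpha(a)(bc)\to(ab)\alpha(c)$ replaces the left factor by its preimage under the index shift, so the rewriting changes the letters of a monomial; termination and especially confluence of such a letter-modifying system are not automatic and would require an explicit critical-pair analysis. For route (ii), preservation of injectivity by the free Hom-associative functor would follow from an $\alpha$-equivariant retraction of $\bigcup_{i\geq 0}X_i$ onto $\bigcup_{i\geq 1}X_i$, but no such retraction exists (any map fixing $x_1$ and commuting with the shift sends $x_0$ to some $x_k$ with $k\geq 1$, and then $\rho(\alpha(x_0))=x_1\neq x_{k+1}=\alpha(\rho(x_0))$), so monomorphism-preservation must be argued by other means. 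As it stands this is a plan, not a proof.

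For comparison, the paper argues differently: from $\alpha(s)\in I$ it deduces $\alpha(s)(bc)\in I$ and, via the defining generator with $a=s$, that $(sb)\alpha(c)\in I$ for all $b,c$; it then asserts that ``by the definition of $I$ and the fact that the algebra is free'' this forces $sb\in I$ and hence $s\in I$. That cancellation step is a division/saturation property of the ideal of Hom-associators which the paper does not justify either; it is essentially the same difficulty you isolate, viewed through right multiplication by $\alpha(c)$ rather than through the subalgebra $F_1$. So your route is genuinely different in its bookkeeping and arguably more explicit about where the real work lies, but both arguments leave the same combinatorial fact about $I$ unestablished; completing your proof requires actually executing one of your two programmes.
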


\begin{proof}
First note that $\alpha:\mathcal{F}_{\alpha,ass}(X)\to\mathcal{F}_{\alpha,ass}(X)$ is well defined as $\alpha(I)\subseteq I$.

Consider $\alpha([s])=0$ for some $[s]\in \mathcal{F}_{\alpha,ass}(X)$; then $\alpha(s)\in I$ considered as $\alpha:\mathcal{F}_\alpha(X)\to \mathcal{F}_\alpha(X)$. Hence $\alpha(s)(bc)\in I$ and also $(sb)\alpha(c)\in I$ for all $b,c\in \mathcal{F}_\alpha(X)$. By the definition of $I$ and the fact that the algebra is free, it is only possible that $sb\in I$. By the same argument as before, we have   $s\in I$. 

Therefore,  $[s]=0$ and $\alpha$ is injective.
\end{proof}

\begin{prop}
$ \mathcal{F}_{\alpha,ass}(X)$ is a Hom-Hopf algebra.
\end{prop}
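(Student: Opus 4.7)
My plan is to assemble the Hom-Hopf algebra structure on $\mathcal{F}_{\alpha,ass}(X)$ (tacitly augmented by a unit, as required by the paper's definition of Hom-bialgebra) in three layers: produce a Hom-bialgebra structure on the free unitary Hom-algebra via Lemma \ref{BFree}, descend that structure through the Hom-associativity ideal, and construct an antipode using the injectivity of $\alpha$ established in the proposition immediately preceding this one.

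First I would form the unitary free Hom-algebra $\widetilde{B}$ on $\bigcup_i X_i$ (freely with respect to $\alpha(x_i)=x_{i+1}$) and apply Lemma \ref{BFree} to endow $\widetilde{B}$ with a counitary, coassociative, cocommutative Hom-bialgebra structure: $\Delta(x_i)=u(1)\otimes x_i+x_i\otimes u(1)$ and $\epsilon(x_i)=0$ on generators, extended as Hom-algebra morphisms. The Hom-associative quotient defining $\mathcal{F}_{\alpha,ass}(X)$ is obtained by killing the two-sided ideal $J=\langle\alpha(a)(bc)-(ab)\alpha(c)\rangle$.

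Second I would verify that $J$ is a coideal, so that $\Delta$ and $\epsilon$ descend to $\mathcal{F}_{\alpha,ass}(X)$. For a defining generator $r=\alpha(a)(bc)-(ab)\alpha(c)$, one expands $\Delta(r)$ using that $\Delta$ is a Hom-algebra morphism and $\Delta\circ\alpha=\alpha^{\otimes 2}\circ\Delta$; each resulting summand, rewritten one tensor slot at a time, has the form $\alpha(a_{(i)})(b_{(i)}c_{(i)})-(a_{(i)}b_{(i)})\alpha(c_{(i)})$ in one factor, placing it in $J\otimes\widetilde{B}+\widetilde{B}\otimes J$. Coassociativity, counitality, and the algebra-morphism property for $\Delta,\epsilon$ pass to the quotient, yielding a unitary, counitary, Hom-associative, coassociative Hom-bialgebra.

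Third, to build the antipode I would define $S(x_i)=-x_i$ on generators (this value is \emph{forced}: the axiom $\mu\circ(\mathrm{id}\otimes S)\circ\Delta(x_i)=\alpha(x_i)+\alpha(S(x_i))=0$ combined with injectivity of $\alpha$ from the preceding proposition gives $S(x_i)=-x_i$), extend inductively on monomial length by solving the antipode axiom $\alpha(S(m))=-\alpha(m)-\sum \alpha(m_{(1)})S(m_{(2)})$ (again using injectivity of $\alpha$ to lift uniquely from $\alpha\circ S$ to $S$), and set $S\circ\alpha=\alpha\circ S$. I would finally check $S(J)\subseteq J$ so that $S$ descends. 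The main obstacle is precisely this last step, together with verifying the antipode axiom beyond generators: in the Hom-setting $S$ is neither a Hom-algebra morphism nor a literal anti-morphism (the twist $\alpha$ mediates both sides of the antipode equation), so the usual "antipode is anti-multiplicative" shortcut is unavailable. I expect the consistency check to go by induction on the length of a Hom-associative normal form for words, and to rely essentially on the injectivity of $\alpha$ just proved; this is what allows the recursive formula to define a genuine map rather than a map defined only up to $\ker\alpha$.
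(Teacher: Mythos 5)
There is a genuine gap in your antipode construction. You propose to \emph{define} $S$ on a monomial $m$ by solving $\alpha(S(m))=-\alpha(m)-\sum' m_{(1)}S(m_{(2)})$ and lifting through $\alpha$ ``using injectivity.'' Injectivity of $\alpha$ only gives \emph{uniqueness} of such a lift, not its \emph{existence}: you would need the right-hand side to lie in $\operatorname{im}(\alpha)$, and $\alpha$ is very far from surjective on $\mathcal{F}_{\alpha,ass}(X)$ (no degree-$0$ generator $x_0$ is in its image, and individual coproduct summands such as $(\alpha(y)z)\alpha^2(S(x))$ for $z\in X_0$ are not visibly in $\operatorname{im}(\alpha)$ either). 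Nothing in your sketch establishes that the sum lands in $\operatorname{im}(\alpha)$, so the recursion does not yet define a map. Moreover, your stated ``main obstacle'' --- that $S$ cannot be a literal anti-morphism in the Hom-setting --- is exactly what the paper's proof contradicts: the paper simply \emph{defines} $S$ in closed form by $S(x_i)=-x_i$, $S(1)=1$, $S(uv)=S(v)S(u)$, which sidesteps the existence problem entirely, and then \emph{verifies} the antipode identity $\sum u_{(1)}S(u_{(2)})=\epsilon(u)$ by induction on word length: one applies $\alpha$ to the whole expression, uses multiplicativity of $\alpha$ and two applications of Hom-associativity to regroup it into $\sum\alpha(a_{(1)})\alpha^2(S(a_{(2)}))\,\epsilon(b)$ for $u=ab$, invokes the inductive hypothesis, and only then cancels the outer $\alpha$ by the injectivity you cite. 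Injectivity thus enters at the \emph{end} of a verification, not at the start of a definition.

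Your first two layers (endowing the free unitary Hom-algebra with the Hom-bialgebra structure of Lemma \ref{BFree} and checking that the Hom-associativity ideal is a coideal so that $\Delta$ and $\epsilon$ descend) are sound and in fact more explicit than the paper, which extends $\Delta$ and $\epsilon$ as algebra morphisms directly on the quotient. To repair the proof, replace the recursive definition of $S$ by the explicit anti-multiplicative one and carry out the $\alpha$-twisted inductive verification; as a bonus this makes the compatibility $S\circ\alpha=\alpha\circ S$ and the descent of $S$ through the Hom-associativity ideal immediate from the formula.
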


\begin{proof}
Define $\Delta(x_i)=1\otimes x_i+x_i\otimes 1$ for $x_i\in X_i$ and extend it as an algebra morphism. Also consider $\epsilon(x_i)=0$ and $\epsilon(1)=1$ and extend them as  algebra morphisms. Set  also $S(x_i)=-x_i$, $S(1)=1$ and $S(u\ v)=S(v)S(u)$ and extend it  linearly. We need to show that $\sum u_{(1)}S(u_{(2)})=\epsilon(u)$.

The proof is trivial for the case of $u=1,x_i$. Assume now that $|u|> 1$.

Then $u=a\ b$ where $|a|,|b|\geq 1$, and 
\begin{eqnarray*}
&& \alpha(\sum u_{(1)}S(u_{(2)}))=\alpha(\sum (a_{(1)}b_{(1)})(S(b_{(2)})S(a_{(2)})))\\&& =\sum \alpha(a_{(1)}b_{(1)})(\alpha(S(b_{(2)}))\alpha(S(a_{(2)})))=\sum ((a_{(1)}b_{(1)})\alpha(S(b_{(2)})))\alpha^2(S(a_{(2)}))\\&& =\sum (\alpha(a_{(1)})(b_{(1)}S(b_{(2)})))\alpha^2(S(a_{(2)}))=\sum \alpha(a_{(1)})\alpha^2(S(a_{(2)}))\epsilon(b)=0.
\end{eqnarray*}

By injectivity, it follows that $\sum u_{(1)}S(u_{(2)})=\epsilon(u)$.
\end{proof}

\begin{cor}
Let $(L,\beta)$ be a Hom-Lie algebra. Then its universal enveloping algebra $U_{hom}(L,\beta)$ as defined in \cite{DYEnvHomLie} is a Hom-Hopf algebra.
\end{cor}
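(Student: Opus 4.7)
The plan is to realize $U_{hom}(L,\beta)$ as a quotient of the Hom-Hopf algebra $\mathcal{F}_{\alpha,ass}(L)$ from the preceding proposition by a Hom-Hopf ideal, so that the full Hom-Hopf structure descends. Yau's universal enveloping algebra of $(L,\beta)$ is, up to identification of the freely-defined twist with $\beta$ on generators, the quotient of the free Hom-associative algebra on $L$ by the ideal $J$ generated by $xy-yx-[x,y]$ for $x,y\in L$. Since the theorem on $U_{hom}$ for Hom-Sabinin algebras already guarantees a Hom-bialgebra structure, the key remaining task is to exhibit a compatible antipode; this we do by transporting the one already constructed on $\mathcal{F}_{\alpha,ass}(L)$.

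By the previous proposition, $\mathcal{F}_{\alpha,ass}(L)$ carries coproduct $\Delta(x)=1\otimes x+x\otimes 1$ on generators extended as a Hom-algebra morphism, counit $\epsilon$ vanishing on $L$, and antipode $S$ with $S(x)=-x$ extended antimultiplicatively. Using the componentwise Hom-product $(a\otimes b)(c\otimes d)=ac\otimes bd$ together with $1\cdot z=\alpha(z)$, a direct computation gives $\Delta(xy)=1\otimes xy+xy\otimes 1+\alpha(x)\otimes\alpha(y)+\alpha(y)\otimes\alpha(x)$. The cross terms are symmetric in $x,y$, so they cancel in $\Delta(xy-yx)$, and since $[x,y]\in L$ is itself primitive by construction, the generator $xy-yx-[x,y]$ of $J$ is primitive. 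One likewise checks $\epsilon(xy-yx-[x,y])=0$ and $S(xy-yx-[x,y])=yx-xy+[x,y]=-(xy-yx-[x,y])\in J$. Routine ideal arguments (using that $\Delta$ and $S$ are algebra, respectively antimultiplicative, morphisms) then promote these to $\Delta(J)\subseteq J\otimes H+H\otimes J$, $\epsilon(J)=0$ and $S(J)\subseteq J$, so $J$ is a Hom-Hopf ideal.

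Consequently $\mathcal{F}_{\alpha,ass}(L)/J$ inherits the Hom-Hopf structure; after performing the further identification $\alpha(x)=\beta(x)$ for $x\in L$ one recovers $U_{hom}(L,\beta)$. The main obstacle is precisely this last identification: one must verify that the additional quotient by the relations $\alpha(x)-\beta(x)$ with $x\in L$ is also a Hom-Hopf ideal. This reduces to the observation that $\alpha(x)$ and $\beta(x)$ both lie in $L$ and so are each primitive, hence their difference is primitive, vanishes under $\epsilon$, and is sent by $S$ to its own negative; the antipode axiom then descends through the quotient (alternatively one verifies it by induction on word length using the injectivity argument of the previous proposition on $\alpha$). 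Granting this compatibility, the corollary follows from the previous proposition by the standard descent of Hom-Hopf structures through a Hom-Hopf ideal.
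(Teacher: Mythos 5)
Your proposal is correct and follows essentially the same route as the paper: the paper likewise realizes $U_{hom}(L,\beta)$ as the quotient of the Hom-Hopf algebra $\mathcal{F}_{\alpha,ass}(L)$ by the ideal generated by the primitive elements $[x,y]-xy+yx$ and $x_i-\beta^{i-1}(x_1)$ (your $\alpha(x)-\beta(x)$ identification), and lets the coproduct and antipode descend. You merely spell out the primitivity computation and the Hom-Hopf-ideal verification that the paper leaves implicit.
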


\begin{proof}
Since $\mathcal{F}_{\alpha,ass}(L)$ is a Hom-Hopf algebra and $U_{hom}(L,\beta)$ is defined as a quotient and generated by primitive elements. This quotient preserves the coproduct. In particular,  the ideal is generated by $\{[x,y]-xy+yx|\ x,y\in L\}\cup\{x_i-\beta^{i-1}(x_1)|\ x_i\in X_i,\ i>1\}$.
\end{proof}

Here are some examples of the equation $\sum u_{(1)}S(u_{(2)})=\epsilon(u)$, for small  $|u|$ cases.

\begin{ex}\ 
\begin{itemize}
\item If $u=x$ then $\sum u_{(1)}S(u_{(2)})=x\ 1+1 S(x)=\alpha(x)-\alpha(x)=0.$
\item If $u=(xy)$ then
 \begin{eqnarray*}
 && \sum u_{(1)}S(u_{(2)})=xy\ 1+1 S(xy)+\alpha(x)S(\alpha(y))+\alpha(y)S(\alpha(x))\\&& =\alpha(xy)+\alpha(yx)-\alpha(xy)-\alpha(yx)=0.
 \end{eqnarray*}
\item If $u=(xy)z$ then  
\begin{align*}
& \sum u_{(1)}S(u_{(2)})=(xy)z\ 1+1 S((xy)z)+\alpha^2(x)S(\alpha(y)z)+\alpha^2(y)S(\alpha(x)z)+ \\ &\alpha(xy)S(\alpha (z))+
\alpha(z)S(\alpha(xy))+ \alpha(y)zS(\alpha^2(x))+\alpha(x)zS(\alpha^2(y))\\ &   =
\alpha((xy)z)-\alpha(z(yx))+\alpha^2(x)(z\alpha (y))+\alpha^2(y)(z\alpha (x))-\alpha((xy)z)+\\ & 
\alpha(z)(\alpha(yx))-(\alpha(y)z)\alpha^2(x)-(\alpha(x)z)\alpha^2(y) \\ & =
 \alpha^2(x)(z\alpha (y))+\alpha^2(y)(z\alpha (x))-(\alpha(y)z)\alpha^2(x)-(\alpha(x)z)\alpha^2(y)=0.
\end{align*}
\item If $u=(a(bc))d$ then
\begin{align*}
& \sum u_{(1)}S(u_{(2)})=(a(bc))d\ 1+1\ S((a(bc))d)+\alpha(a(bc))S(\alpha(d))\\&  +\alpha(d)S(\alpha(a(bc)))+  
 \alpha^2(bc)S(\alpha(a)d)+(\alpha(a)d)S(\alpha^2(bc))\\&  +\alpha^3(b)S((a\alpha(c))d)+
  ((a\alpha(c))d)S(\alpha^3(b))+
\alpha^3(c)S((a\alpha(b))d)\\& +((a\alpha(b))d)S(\alpha^3(c))+\alpha^2(a)S(\alpha(bc)d)+(\alpha(bc)d)S(\alpha^2(a))\\& +
\alpha(a\alpha(b))S(\alpha^2(c)d)+(\alpha^2(c)d)S(\alpha(a\alpha(b)))\\& +
\alpha(a\alpha(c))S(\alpha^2(b)d)+(\alpha^2(b)d)S(\alpha(a\alpha(c)))\\&
  =
 \alpha((a(bc))d)+\alpha(d((cb)a))-\alpha((a(bc))d)-\alpha(d((cb)a))+\\& 
  \alpha^2(bc)(d\alpha(a))+(\alpha(a)d)\alpha^2(cb)-\alpha^3(b)(d(\alpha(c)a))\\& -((a\alpha(c))d)\alpha^3(b)-
\alpha^3(c)(d(\alpha(b)a))-((a\alpha(b))d)\alpha^3(c)\\& -\alpha^2(a)(d\alpha(cb)) -(\alpha(bc)d)\alpha^2(a)+
\alpha(a\alpha(b))(d\alpha^2(c))\\& +(\alpha^2(c)d)(\alpha(\alpha(b)a))+ 
\alpha(a\alpha(c))(d\alpha^2(b))+(\alpha^2(b)d)(\alpha(\alpha(c)a))=0.
\end{align*}
\end{itemize}
\end{ex}

Note that this last universal enveloping algebra may be different from our definition since it is adjoint to $YIII_{hom}:\mathbf{Hom}$-$\mathbf{Ass}\to \mathbf{Hom}$-$\mathbf{Lie}$, which considers only the Hom-associative algebras and not all Hom-algebras where $YIII_{hom}(A,\alpha)$ is a Hom-Lie; called Hom-Lie admissible algebra.

\subsection{An example: Antisymmetric algebras}

As an example, we will show the relation between algebras with an antisymmetric bilinear product, and connected $(0,id)$-Hom-Hopf algebras.

\begin{prop}
There is an equivalence of categories between Hom-Lie algebras where $\alpha=0$ and algebras with an antisymmetric bilinear product.
\end{prop}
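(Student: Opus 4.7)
The plan is to exhibit two functors between the two categories and show they are mutually inverse. Let $\mathcal{C}$ denote the category of Hom-Lie algebras $(L,[-,-],\alpha)$ with $\alpha=0$, and $\mathcal{D}$ the category of vector spaces equipped with an antisymmetric bilinear product. A morphism in $\mathcal{C}$ is a linear map $f$ with $f([x,y])=[f(x),f(y)]$ and $f\circ\alpha=\alpha'\circ f$; a morphism in $\mathcal{D}$ is a linear map preserving the antisymmetric product.

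First I would define $F:\mathcal{C}\to\mathcal{D}$ as the forgetful functor $(L,[-,-],0)\mapsto (L,[-,-])$, acting on morphisms as the identity on underlying linear maps. This is well defined because, by bilinearity, the condition $f\circ 0=0\circ f$ is automatic, so every morphism in $\mathcal{C}$ descends to a morphism in $\mathcal{D}$. Conversely I would define $G:\mathcal{D}\to\mathcal{C}$ by $(A,[-,-])\mapsto(A,[-,-],0)$, again acting as the identity on morphisms. The only nontrivial thing to check is that $G(A,[-,-])$ is genuinely a Hom-Lie algebra: antisymmetry is given, and the Hom-Jacobi identity
\begin{equation*}
[[x,y],\alpha(z)]+[[y,z],\alpha(x)]+[[z,x],\alpha(y)]=0
\end{equation*}
holds trivially when $\alpha=0$ since $[-,0]=0$ follows from bilinearity of the bracket. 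Similarly, any linear map $f$ between such structures automatically satisfies $f\circ 0=0\circ f$, so $G$ is functorial.

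Finally I would observe that $F\circ G$ and $G\circ F$ are literally the identity functors on objects and on morphisms, since both functors only manipulate the trivial endomorphism and leave the bracket and the underlying linear maps untouched. In particular there is no natural-isomorphism coherence to verify beyond strict equality.

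There is essentially no hard step: the content of the proposition is the observation that the Hom-Jacobi identity collapses when $\alpha=0$, and the only mild subtlety is making sure both categories have exactly the same morphisms, which is immediate from the fact that the twist-compatibility condition $f\circ\alpha=\alpha'\circ f$ becomes vacuous when both twisting maps vanish. Consequently $F$ and $G$ are inverse isomorphisms of categories, which in particular gives the claimed equivalence.
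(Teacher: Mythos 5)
Your proposal is correct and follows essentially the same route as the paper: both rest on the observation that the Hom-Jacobi identity becomes vacuous when $\alpha=0$ and that the twist-compatibility condition on morphisms is automatic, so the two categories have identical objects and morphisms up to forgetting the zero map. The only difference is that you package the argument explicitly as a pair of mutually inverse functors, which is a slightly more formal presentation of the same content.
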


\begin{proof}
Let $(L,[\ ,\ ],0)$ be a Hom-Lie algebra. Then $[\ ,\ ]$ is an antisymmetric bilinear product.

Let $(L,\cdot)$ be an algebra with an antisymmetric bilinear product. Then the Hom-Jacobiator $J_\alpha(x,y,z)=0$ for this product in case $\alpha=0$. Hence $(L,\cdot,0)$ is a Hom-Lie algebra where $\alpha=0$.

Let  $f:L_1\to L_2$  be a morphism of Hom-Lie algebras.  Then $f$ is a morphism of the corresponding antisymmetric algebras.

Consider finally, $f:L_1\to L_2$ a morphism of antisymmetric algebras. Then $f\circ 0=0=0\circ f$. Therefore, $f$ is a morphism of the corresponding Hom-Lie algebras where $\alpha_1=\alpha_2=0$.
\end{proof}

\begin{prop}
There is an equivalence of categories between Hom-associative algebras with $\alpha=0$ and magmatic algebras; i.e., algebras with one binary operation.
\end{prop}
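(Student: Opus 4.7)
The plan is to mirror the argument of the preceding proposition about antisymmetric algebras, replacing the Hom-Jacobi identity with the Hom-associativity identity and observing that both sides of the Hom-associator vanish identically when $\alpha=0$.

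First, I would go from objects to objects in one direction: given a Hom-associative algebra $(A,\mu,0)$, the pair $(A,\mu)$ is by definition a magmatic algebra, since no identity is demanded of a magmatic algebra beyond the existence of a binary operation. In the other direction, given a magmatic algebra $(A,\mu)$, I would equip it with the zero endomorphism and check the Hom-associative identity \eqref{Hom-associator}. With $\alpha=0$, the expression $(xy)\alpha(z)-\alpha(x)(yz)$ becomes $(xy)\cdot 0 - 0\cdot (yz)=0-0=0$, so $(A,\mu,0)$ is Hom-associative. Note that the zero map is trivially a (linear) endomorphism of the magmatic structure, so the construction is well defined.

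Next, I would handle morphisms. A morphism of Hom-associative algebras $f:(A_1,\mu_1,0)\to(A_2,\mu_2,0)$ is a linear map preserving the product and intertwining the twisting maps; the intertwining condition $f\circ 0=0\circ f$ is automatic, so $f$ reduces to a magmatic algebra morphism. Conversely, any magmatic morphism $f:(A_1,\mu_1)\to(A_2,\mu_2)$ satisfies $f\circ 0=0=0\circ f$ vacuously, so it upgrades to a Hom-associative morphism between the associated zero-twisted structures.

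Finally, I would package the two assignments into functors $F:\mathbf{Hom}\text{-}\mathbf{Ass}_{\alpha=0}\to\mathbf{Mag}$ and $G:\mathbf{Mag}\to\mathbf{Hom}\text{-}\mathbf{Ass}_{\alpha=0}$, noting that both act as the identity on underlying vector spaces, multiplications, and linear maps; hence $F\circ G$ and $G\circ F$ are literally identity functors, yielding the claimed equivalence. There is no real obstacle here: the only content is the observation that $\alpha=0$ trivializes the Hom-associator, so the identity defining Hom-associativity becomes vacuous, exactly parallel to how it trivialized the Hom-Jacobiator in the preceding proposition.
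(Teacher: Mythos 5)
Your proposal is correct and follows the same idea as the paper, which simply observes that the Hom-associativity identity $(xy)\alpha(z)-\alpha(x)(yz)=0$ is trivially satisfied by any algebra when $\alpha=0$; you merely spell out the functors on objects and morphisms in more detail than the paper's one-line proof does.
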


\begin{proof}
The Hom-associativity is trivially satisfied for any algebra in case $\alpha=0$.
\end{proof}

\begin{cor}
The category of $(0,id)$-$Hom$-$Hopf$-algebras is equivalent to the category of algebras which are magmatic algebras; i.e., algebras with just one product, and that product is bilinear; with an element $u$ such that $uH=Hu=0$.

Since $\mathbb{K}\langle u\rangle$ is an ideal one can also consider only the algebra $H/\mathbb{K}\langle u\rangle$ and the element $u$ can be recovered to define the structure of a $(0,id)$-$Hom$-$Hopf$-algebra.
\end{cor}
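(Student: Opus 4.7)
The plan is to combine the two preceding propositions with a direct analysis of the unit, counit, and antipode data when $\alpha=0$ on the algebra side and the coalgebra twist is the identity. By the previous proposition, the Hom-associativity $(xy)\alpha(z)=\alpha(x)(yz)$ with $\alpha=0$ holds vacuously, so the multiplication on $H$ is just a magmatic bilinear operation. The unit axiom $\mu(u(1),x)=\mu(x,u(1))=\alpha(x)=0$ forces the distinguished element $u:=u(1)$ to satisfy $uH=Hu=0$; consequently $\mathbb{K}\langle u\rangle$ is a two-sided ideal, which already justifies the final remark about passing to the quotient $H/\mathbb{K}\langle u\rangle$.

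For the inverse functor, given a magmatic algebra $A$ with an absorbing element $u$, I would reconstruct the Hom-Hopf structure explicitly: take $\alpha=0$ (with $\alpha(u)=u$), counit $\epsilon(u)=1$ and $\epsilon=0$ elsewhere, coproduct $\Delta(u)=u\otimes u$ and $\Delta(a)=u\otimes a+a\otimes u$ for $a$ in a complement of $\mathbb{K} u$, and antipode $S(u)=u$, $S(a)=-a$. The bialgebra compatibility of $\Delta$ and $\epsilon$ with the product, and both antipode identities $\mu\circ(\mathrm{id}\otimes S)\circ\Delta=u\circ\epsilon=\mu\circ(S\otimes\mathrm{id})\circ\Delta$, all reduce to trivial equalities because every product involving $u$ collapses to zero. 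The assignments $H\mapsto(H,u)$ and $(A,u)\mapsto H$ are then easily seen to be mutually inverse on objects and on morphisms, giving the claimed equivalence of categories; quotienting by $\mathbb{K}\langle u\rangle$ recovers $A$, and adjoining a new absorbing $u$ reconstructs $H$.

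The main obstacle I expect is showing that the coproduct on an arbitrary $(0,\mathrm{id})$-Hom-Hopf algebra is forced to split into a grouplike piece carried by $u$ and a primitive piece on the complement, rather than carrying additional information. The lever is that $\Delta$ is an algebra morphism into $H\otimes H$, where the absorbing property $uH=Hu=0$ collapses almost all products of tensors; combined with the antipode equation (which, given $\alpha=0$, effectively demands $\sum x_{(1)}S(x_{(2)})\in\mathbb{K}\langle u\rangle$ and is otherwise very restrictive), this rigidity eliminates any richer coproduct and forces the simple primitive/grouplike form described above, completing the identification.
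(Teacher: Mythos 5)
Your setup and the easy half of the equivalence match the paper: you invoke the two preceding propositions, observe that the unit axiom with $\alpha=0$ forces $u(1)$ to be absorbing, and construct the inverse functor $G$ by choosing a complement $P$ of $\mathbb{K}\langle u\rangle$ and declaring $u$ grouplike and $P$ primitive, exactly as the paper does; the composite $F\circ G=\mathrm{ID}$ is then immediate in both treatments. The problem is the other composite. You correctly identify the crux --- that on an \emph{arbitrary} $(0,\mathrm{id})$-Hom-Hopf algebra the coproduct must be forced into the grouplike-plus-primitive form --- but you do not actually prove it: ``the absorbing property collapses almost all products'' and ``this rigidity eliminates any richer coproduct'' is a hope, not an argument. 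Nothing in the multiplicativity of $\Delta$ alone prevents an element from having a coproduct with a genuinely mixed term $\sum y_i\otimes z_i$ with $y_i,z_i$ outside $\mathbb{K}\langle u\rangle$, and the antipode identity is too weak to exclude it, since with $uH=Hu=0$ almost every term of $\sum x_{(1)}S(x_{(2)})$ vanishes for free and so imposes almost no constraint.

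What the paper actually uses to close this gap is coalgebra-theoretic, not algebra-theoretic: it first computes $\Delta(xy)=xy\otimes u+u\otimes xy$ for primitives $x,y$ (so $\mathrm{Prim}\,H$ is a subalgebra), and then invokes the structure of the coradical filtration --- namely that $(\mathrm{Prim}\,H)^{*}\oplus H_{0}^{*}$ generates $\mathrm{Gr}(H^{*})$, citing Montgomery's Lemma 5.6.6 --- to conclude $H=H_{0}\oplus \mathrm{Prim}\,H$, which is precisely the statement that every element decomposes as (multiple of $u$) plus (primitive). This step implicitly relies on connectedness of the coalgebra (the ambient subsection is about \emph{connected} $(0,\mathrm{id})$-Hom-Hopf algebras); without some such hypothesis the forced decomposition is not automatic. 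To repair your proof you would need to supply this input: either quote the coradical filtration result as the paper does, or give a direct argument that the coradical is $\mathbb{K}\langle u\rangle$ and that the filtration stops in degree one. As written, the key identification $G\circ F=\mathrm{ID}$ is asserted rather than established.
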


\begin{proof}
Let $H$ be a $(0,id)$-Hom-Hopf algebra. Then it is a magmatic algebra with an element $u$ as specified; by forgetting the coalgebra structure. Let's denote this construction as the $F$ functor.

Consider any algebra $(H,\cdot)$ with an element $u\in H$ such that $uH=Hu=0$. Define a complement $P\oplus \mathbb{K}\langle u\rangle=H$ and a coproduct as $\Delta(u)=u\otimes u$ and $\Delta(x)=x\otimes u+u\otimes x$ for $x\in P$.
Also a counit as $\epsilon(u)=1$ and $\epsilon(x)=0$ for any $x\in P$. Then $(H,\cdot,u,\Delta,\epsilon)$ is a $(0,id)$-Hom-Hopf algebra. Let's denote this construction as the $G$ functor

This two constructions are inverse to one another. It is obvious that $F\circ G=ID$ since the product of the algebra is not modified by $G$; and the coalgebra construction of $G$ is forgotten by $F$.

Let $H$ be a $(0,id)$-Hom-Hopf algebra. First, let's prove that $Prim\ H$ is a subalgebra of $H$:
$$\Delta(xy)=\Delta(x)\Delta(y)=(x\otimes u+u\otimes x)(y\otimes u+u\otimes y)=xy\otimes u+u\otimes xy.$$

Since $(Prim\ H)^*\oplus H_0^*$ generates $Gr(H^*)$, see for instance \cite[Lemma 5.6.6]{SMon93}, it follows that $H=H_0\oplus Prim\ H$. The coproduct is then fixed as $\Delta(u)=u\otimes u$ and $\Delta(x)=x\otimes u+u\otimes x$ for $x\in Prim\ H$. Therefore $G\circ F=ID$.
\end{proof}

\begin{prop}
The Hom-associative universal enveloping algebra of a Hom-Lie algebra $L$ where $\alpha=0$ is given by $U_\alpha(L)=\mathbb{K}\{L\}/\langle xy-yx-[x,y]|\ x,y\in L\rangle$.
\end{prop}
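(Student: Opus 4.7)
The plan is to verify directly that $U_\alpha(L)=\mathbb{K}\{L\}/\langle xy-yx-[x,y]\mid x,y\in L\rangle$ satisfies the universal property of the Hom-associative enveloping algebra, namely that it is left adjoint at $L$ to the functor $YIII_{hom}:\mathbf{Hom}\text{-}\mathbf{Ass}\to\mathbf{Hom}\text{-}\mathbf{Lie}$, restricted to the full subcategories where $\alpha=0$.

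First, I would invoke the two equivalences established earlier: Hom-Lie algebras with $\alpha=0$ are precisely algebras with an antisymmetric bilinear product, and Hom-associative algebras with $\alpha=0$ are precisely magmatic algebras (since the Hom-associativity identity reduces to $0=0$). Under these identifications, $YIII_{hom}(A,0)$ is $A$ endowed with the commutator $[a,b]=ab-ba$, which is automatically antisymmetric. So the question reduces to identifying the left adjoint of the commutator functor from magmatic algebras to antisymmetric algebras.

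Second, I would use the universal property of the free magmatic algebra $\mathbb{K}\{L\}$: any linear map $f\colon L\to A$ into a magmatic algebra $A$ extends uniquely to a magmatic morphism $\tilde f\colon \mathbb{K}\{L\}\to A$, where we equip $\mathbb{K}\{L\}$ with $\alpha=0$ so that it becomes Hom-associative. Given a Hom-Lie morphism $f\colon(L,[\cdot,\cdot],0)\to YIII_{hom}(A,0)$, the condition $f([x,y])=f(x)f(y)-f(y)f(x)$ forces $\tilde f$ to send every generator $xy-yx-[x,y]$ of the defining ideal to zero. Hence $\tilde f$ factors uniquely through the quotient, yielding a Hom-associative morphism $U_\alpha(L)\to A$; and $\alpha=0$ is automatically preserved on both sides. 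Conversely, any Hom-associative morphism $U_\alpha(L)\to A$ restricts along the canonical map $L\to U_\alpha(L)$ to a linear map preserving the bracket, i.e.\ a Hom-Lie morphism. These two assignments are inverse to each other, establishing the adjunction at $L$.

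Finally, by uniqueness of left adjoints, this shows $U_\alpha(L)$ as described coincides with the Hom-associative universal enveloping algebra. The only potential pitfall is distinguishing this Hom-associative enveloping algebra from the Hom-Sabinin enveloping algebra $U_{hom}(L,0)$ of the previous sections; but here we are adjoint to the restricted functor $YIII_{hom}\colon\mathbf{Hom}\text{-}\mathbf{Ass}\to\mathbf{Hom}\text{-}\mathbf{Lie}$, for which the defining relation reduces to $[a,b]+\langle a,b\rangle=0$, i.e.\ exactly $[x,y]=xy-yx$, and no further relations appear because on $L$ every higher-order $q^0$ operation vanishes and Hom-associativity imposes no identity when $\alpha=0$. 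This last observation is what makes the proof essentially routine rather than combinatorially heavy.
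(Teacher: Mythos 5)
Your proposal is correct and follows essentially the same route as the paper, whose entire proof is the observation that Hom-associativity is vacuous when $\alpha=0$ (by the preceding equivalence with magmatic algebras), so that the free Hom-associative algebra is just $\mathbb{K}\{L\}$ and the stated quotient follows. You have simply spelled out the universal-property verification that the paper compresses into ``the result follows.''
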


\begin{proof}
Since $\alpha=0$ by the previous proposition, any algebra is Hom-associative in this case, and the result follows.
\end{proof}

\begin{cor}
We have $\pi(L)\cong L$ as vector spaces, where $\pi:\mathbb{K}\{L\}\to U_\alpha(L)$ is the canonical quotient map.
\end{cor}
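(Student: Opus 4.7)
The plan is to exhibit an explicit left inverse (retraction) to the restriction $\pi|_L : L \to U_\alpha(L)$. Once such a retraction is produced, injectivity of $\pi|_L$ is automatic and the claim follows.

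To build the retraction I would use the preceding observations in this subsection: since $\alpha = 0$, every bilinear product makes the underlying vector space into a Hom-associative algebra, and an antisymmetric bilinear product makes it a Hom-Lie algebra. So I equip $L$ itself with the new product $x \ast y := \tfrac{1}{2}[x,y]$ and twisting map $\alpha_L = 0$, yielding a Hom-associative Hom-algebra $(L,\ast,0)$. By the universal property of the free Hom-algebra $\mathbb{K}\{L\}$, the identity map $L \to L$ extends uniquely to a Hom-algebra morphism $F : \mathbb{K}\{L\} \to (L,\ast,0)$ which is the identity on generators.

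The next step is to verify that $F$ kills the defining ideal of $U_\alpha(L)$. Using antisymmetry of $[\,\cdot\,,\,\cdot\,]$,
$$F(xy - yx - [x,y]) = x\ast y - y\ast x - [x,y] = \tfrac{1}{2}[x,y] - \tfrac{1}{2}[y,x] - [x,y] = 0$$
for all $x,y\in L$. Hence $F$ descends to a Hom-algebra morphism $\bar{F} : U_\alpha(L) \to L$ with $\bar{F}\circ \pi|_L = \mathrm{id}_L$. This retraction forces $\pi|_L$ to be injective, giving $\pi(L) \cong L$ as vector spaces.

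No real obstacle is expected: the whole argument rests on the triviality of Hom-associativity when $\alpha = 0$ (so that any bilinear product on $L$ is a legitimate target for an enveloping-type morphism) together with the scalar $\tfrac{1}{2}$, which is exactly what is needed to absorb the antisymmetric commutator on the target. The only small point to keep track of is that the universal property of $\mathbb{K}\{L\}$ in this setting is the free magmatic one on $L$ as a vector space, which is precisely the content of the previous proposition identifying Hom-associative algebras with $\alpha = 0$ and magmatic algebras.
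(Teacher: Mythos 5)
Your proof is correct, but it is a genuinely different argument from the one in the paper. You split $\pi|_L$ by an explicit retraction: since $\alpha=0$ makes every bilinear product Hom-associative, the algebra $(L,\tfrac12[\,\cdot\,,\,\cdot\,],0)$ is a legitimate target, the identity on generators extends to an algebra morphism $F:\mathbb{K}\{L\}\to L$ by freeness of the magmatic algebra, and $F$ kills the generators $xy-yx-[x,y]$ of the ideal (hence the whole two-sided ideal, $F$ being multiplicative), so it descends to $\bar F$ with $\bar F\circ\pi|_L=\mathrm{id}_L$. The factor $\tfrac12$ and antisymmetry are exactly what is needed, and characteristic zero makes $\tfrac12$ available. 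The paper instead argues via the filtration $F_n=\pi\bigl(\bigoplus_{k=1}^n L^{\otimes k}\bigr)$ and the claim that the associated graded algebra is the free commutative (nonassociative) algebra on $L$, so that the degree-one piece $F_1/F_0$ is $L$ itself; this is a PBW-flavored argument whose key claim is only asserted, and it is really there to set up the following theorem, which recovers $L$ as $F_1/F_0$ and identifies $H\cong U_\alpha(H_1/H_0)$. What your argument buys is a short, complete, self-contained proof of injectivity of $\pi|_L$ that avoids any structural claim about $\mathrm{Gr}\,U_\alpha(L)$; what it does not give is that structural information, which the paper's route provides and subsequently uses.
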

\begin{proof}
The enveloping algebra $U(L)$ has a filtration given by $F_n=\pi(\bigoplus_{k=1}^n L^{\otimes k})$ and the associated graded algebra is the free commutative algebra generated by $L$. We have  he following result.
\end{proof}

\begin{thm}The map 
$\pi:(L,[\ ,\ ],0)\to YIII_\alpha(U_\alpha(L))$ is an injective morphism of Hom-Lie algebras; even more, the Hom-Lie algebra $L$ can be recovered from the $(0,id)$-Hom-Hopf algebra $U(L)$ as $F_1/F_0$, where $F_n$ is the associated filtration defined previously.
\end{thm}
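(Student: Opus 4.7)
The plan is to verify three claims in sequence: that $\pi$ respects the Hom-Lie bracket, that $\pi$ is injective, and that the Hom-Lie structure of $L$ is recovered from $U(L)$ as the associated quotient $F_1/F_0$.

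First I would check the morphism property. Since $U_\alpha(L)$ is Hom-associative with $\alpha_U$ vanishing on $\pi(L)$ (because $\alpha_L = 0$), the Hom-Lie algebra $YIII_\alpha(U_\alpha(L))$ carries the commutator bracket, as recorded in an earlier example. The defining relation of the enveloping algebra, $xy - yx = [x,y]_L$, then gives directly $\pi(x)\pi(y) - \pi(y)\pi(x) = \pi([x,y]_L)$, so $\pi$ is bracket-preserving. Compatibility with the twist is immediate, since both $\alpha_L$ on the source and $\alpha_U$ restricted to $\pi(L)$ vanish.

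Injectivity is already provided by the preceding corollary, which asserts $\pi(L) \cong L$ as vector spaces; no additional argument is needed here.

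For the recovery statement, the definition of the filtration gives $F_0 = 0$ (empty sum) and $F_1 = \pi(L)$, hence $F_1/F_0 \cong \pi(L) \cong L$ as vector spaces. The bracket on $F_1/F_0$ induced intrinsically by the commutator in the Hom-Hopf algebra matches the original: for lifts $x, y \in F_1$, the element $xy - yx \in U_\alpha(L)$ a priori lies in $F_2$, but by the defining relation it equals $\pi([x,y]_L) \in F_1$, whose class reproduces the original bracket. The induced twist on $F_1/F_0$ is zero, matching $\alpha_L = 0$.

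The only non-trivial point is that $\dim \pi(L) = \dim L$, i.e., that no non-zero element of $L$ is killed by the quotient defining $U_\alpha(L)$. This is precisely what the preceding corollary supplies by identifying the associated graded algebra with the free commutative algebra on $L$, whose degree-one component is $L$ itself. With this fact in hand the theorem assembles from the observations above.
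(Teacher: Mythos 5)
Your proposal is correct and follows essentially the same route the paper intends: the theorem is stated without a separate proof, the argument being deferred to the filtration $F_n=\pi(\bigoplus_{k=1}^n L^{\otimes k})$ and the identification of the associated graded algebra with the free commutative algebra generated by $L$ sketched in the proof of the preceding corollary, which is exactly what you invoke for injectivity and for recovering $L$ as $F_1/F_0$. The remaining verifications you supply (bracket preservation via the defining relation $xy-yx=[x,y]$ in $U_\alpha(L)$, and compatibility with the zero twist) are the routine steps the paper leaves implicit.
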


\begin{thm}
Let $H$ be a $(0,id)$-Hom-Hopf algebra with a filtration where $Gr(H)$, the graded algebra associated to this filtration, is free commutative generated by $H_1$. Then $H_1<YIII_\alpha(H)$ and $H\cong U_\alpha(H_1/H_0,[\ ,\ ])$.
\end{thm}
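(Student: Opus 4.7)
The plan is to produce a filtration-preserving Hom-algebra morphism $\Phi : U_\alpha(L) \to H$ with $L := H_1/H_0$, and then verify it is an isomorphism by comparing associated graded algebras. Throughout I exploit that in a $(0,\mathrm{id})$-Hom-Hopf algebra one has $\alpha=0$ on the multiplication side, so Hom-associativity and the Hom-Jacobi identity are automatic, by the two equivalences preceding the theorem.

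First I would verify $H_1 < YIII_\alpha(H)$. With $\alpha=0$, closure under $[x,y]=xy-yx$ is enough. For $x,y\in F_1=H_1$, both $xy$ and $yx$ lie in $F_2$, and their images coincide in $F_2/F_1$ by commutativity of $Gr(H)$, hence $[x,y]\in H_1$. The degree-zero component of the free commutative algebra $Gr(H)$ is $\mathbb{K}$, so $H_0=\mathbb{K}\cdot u$ where $u$ is the distinguished element from the $(0,\mathrm{id})$-Hom-Hopf equivalence. Since $uH=Hu=0$ we have $[H_0,H_1]=0$, so $H_0$ is a central ideal in $H_1$ and the bracket descends to the Hom-Lie algebra $L=H_1/H_0$.

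To build $\Phi$ I would invoke the decomposition $H = H_0 \oplus \mathrm{Prim}(H)$ from the corollary characterising $(0,\mathrm{id})$-Hom-Hopf algebras, together with the fact (from the same proof) that $\mathrm{Prim}(H)$ is closed under multiplication. It follows that $H_1 = H_0 \oplus (H_1 \cap \mathrm{Prim}(H))$, so I can choose a linear section $s : L \to H_1 \cap \mathrm{Prim}(H)$. The universal property of the free magmatic algebra $\mathbb{K}\{L\}$ extends $s$ to an algebra morphism $\tilde{s} : \mathbb{K}\{L\} \to H$. For $x,y \in L$ the element
\[
s(x)s(y)-s(y)s(x)-s([x,y])
\]
lies in $\mathrm{Prim}(H)$ (closed under sums and products) and, by the very definition of the induced bracket on $L$, projects to zero in $H_1/H_0$, so it lies in $H_0$; hence it lies in $\mathrm{Prim}(H) \cap H_0 = 0$. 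Thus $\tilde{s}$ annihilates the defining relations of $U_\alpha(L) = \mathbb{K}\{L\}/\langle xy-yx-[x,y]\rangle$ (the preceding proposition for $\alpha=0$) and descends to a Hom-algebra morphism $\Phi : U_\alpha(L) \to H$.

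Finally, to conclude that $\Phi$ is an isomorphism I would run a filtered PBW argument. $\Phi$ preserves filtrations by construction, since $s(L)\subset H_1$ and both filtrations are multiplicative. Both $Gr(U_\alpha(L))$, by the preceding corollary, and $Gr(H)$, by hypothesis, are free commutative algebras on $L$, and $Gr(\Phi)$ is the identity on degree one, hence the identity everywhere by freeness; the standard filtered-algebra lemma then yields that $\Phi$ itself is bijective. Coproduct compatibility is automatic: since $s(L)\subset\mathrm{Prim}(H)$, $\Phi$ sends the primitive generators of $U_\alpha(L)$ to primitive elements of $H$, and both coproducts are uniquely determined as the algebra extensions of this primitive behaviour on generators. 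The main obstacle is arranging that the defining relations of $U_\alpha(L)$ hold exactly in $H$ rather than only modulo $H_0$; the resolution is precisely to route the section through $\mathrm{Prim}(H)$, which forces the a priori $H_0$-valued defect also to lie in $\mathrm{Prim}(H)$ and therefore to vanish.
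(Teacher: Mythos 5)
Your argument is correct, and it reaches the isomorphism by a genuinely different construction of the comparison map than the paper does. The paper applies the adjunction $U_{hom}\dashv YIII_{hom}$ to the Hom-Lie morphism $H_1/H_0\to YIII_0(H/H_0)$ induced by the inclusion $H_1\hookrightarrow H$, so its map $F$ lands in the quotient $H/H_0$ and the relations of $U_\alpha(H_1/H_0)$ only need to hold modulo $H_0$; the missing degree-zero piece is restored at the very end by ``identifying $F(F_0)=H_0$.'' You instead use the decomposition $H=H_0\oplus \mathrm{Prim}(H)$ and the multiplicative closure of $\mathrm{Prim}(H)$ from the earlier corollary to choose a section $s:H_1/H_0\to H_1\cap\mathrm{Prim}(H)$, and the observation that the defect $s(x)s(y)-s(y)s(x)-s([x,y])$ lies in $\mathrm{Prim}(H)\cap H_0=0$ is exactly what lets you land in $H$ itself with the relations holding on the nose. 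This buys a cleaner endgame (no re-gluing of $H_0$ after passing to the quotient), at the price of leaning harder on the primitive-element corollary. Both proofs then coincide in the final step: the filtered morphism induces the identity in degree one between two free commutative associated graded algebras, hence an isomorphism of graded algebras, hence an isomorphism of filtered algebras. One small point you should make explicit: since your $\Phi$ is generated by $H_1\cap\mathrm{Prim}(H)$, its image lies in $\mathrm{Prim}(H)$ and misses $H_0=\mathbb{K}u$, so either $U_\alpha(L)$ must be taken unital with its distinguished element mapping to $u$, or the conclusion is $H\cong \mathbb{K}u\oplus\Phi(U_\alpha(L))$ with the $H_0$ summand accounted for separately --- the same degree-zero bookkeeping the paper disposes of with its identification.
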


\begin{proof}
Since $Gr(H)$ is commutative, it follows that $[H_1,H_1]\subseteq H_1$. Hence $H_1$ is closed by the commutator product. Also $H_0\cdot H=0$ and hence $H_1/H_0$ is a quotient of $(H_1, [\ ,\ ])$.

Also there is a morphism $f:H_1/H_0\to YIII_0(H/H_0)$ given by the inclusion of $H_1$ into $H$. By the universal property, then there is $F:U_\alpha(H_1/H_0)\to H/H_0$ a $(0,id)$-Hom-Hopf algebra morphism. This map is given by $F([\overline{h_1}\dots \overline{h_n}])=\overline{h_1\dots h_n}$.

The map $F$ also respects the filtrations; since $F(F_n)\subseteq H_n/H_0$. Then there is a map $Gr F: \mathbb{K}\{H_1/H_0\}/\langle xy-yx|\ x,y\in H_1/H_0\rangle \to Gr(H/H_0)\oplus H_0$ that is an isomorphism, hence $F$ is an isomorphism when identifying $F(F_0)= H_0$.
\end{proof}

We have found then an equivalence between this two categories, but the class of $(0,id)$-Hom-Hopf algebras $H$ with a filtration where $Gr(H)$, the graded algebra associated to this filtration, is free commutative generated by $H_1$; should be studied in more detail. This problem remains open whether our hypothesis on the filtration are necessary or less conditions already imply them. 

Remember that  every connected Hopf algebra has a coradical filtration such that $Gr(H)$ is free in the class of commutative and associative algebras; and is generated by $H_1$.

\bibliographystyle{plainurl}
\bibliography{ArticlesBib}

\end{document}